\documentclass{article}%

\usepackage{amsmath}%
\usepackage{amsfonts}%
\usepackage{amssymb}%
\usepackage{amsthm}%
\usepackage{graphicx}
\usepackage{a4wide}
\usepackage[utf8]{inputenc}
\usepackage[english]{babel}
\usepackage{enumerate}
\usepackage{placeins}

\usepackage{url}

\usepackage{array}
\usepackage{multirow}
\usepackage[usenames,dvipsnames,svgnames,table]{xcolor}

\usepackage{todonotes}

\usepackage[affil-it]{authblk}

%-------------------------------------------
\newtheorem{theorem}{Theorem}
\numberwithin{theorem}{section}

\newtheorem{corollary}[theorem]{Corollary}

\newtheorem{lemma}[theorem]{Lemma}

\newtheorem{proposition}[theorem]{Proposition}

\theoremstyle{definition}
\newtheorem{definition}[theorem]{Definition}
\newtheorem{remark}[theorem]{Remark}

\newcommand{\MT}{\operatorname{MT}}
\newcommand{\SL}{\operatorname{SL}}
\newcommand{\GL}{\operatorname{GL}}
\newcommand{\mC}{\overline{\mathbb{Q}_\ell}}

\newcommand{\ff}{finitely generated subfield of $\mathbb{C}$}
\newcommand{\ffn}{finitely generated field of characteristic zero}
\newcommand{\ffs}{finitely generated subfields of $\mathbb{C}$}
\newcommand{\barK}{\overline{K}}
\newcommand{\kSep}{K^{\operatorname{s}}}

\makeatletter
\def\@maketitle{%
  \newpage
  \null
  \vskip 2em%
  \begin{center}%
  \let \footnote \thanks
    {\Large\bfseries \@title \par}%
    \vskip 1.5em%
    {\normalsize
      \lineskip .5em%
      \begin{tabular}[t]{c}%
        \@author
      \end{tabular}\par}%
    \vskip 1em%
    {\normalsize \@date}%
  \end{center}%
  \par
  \vskip 0.7em}
\makeatother

\begin{document}

\title{On the $\ell$-adic Galois representations attached to nonsimple abelian varieties}
\date{}

\author{Davide Lombardo%
  \thanks{\texttt{davide.lombardo@math.u-psud.fr}}}
\affil{Département de Mathématiques d'Orsay\footnote{Univ. Paris-Sud, CNRS, Université Paris-Saclay, 91405 Orsay, France.}}

\maketitle

{\begin{center} \textsc{Abstract} \end{center}}
{\small We study Galois representations attached to nonsimple abelian varieties over finitely generated fields of arbitrary characteristic. We give sufficient conditions for such representations to decompose as a product, and apply them to prove arithmetical analogues of results shown by Moonen and Zarhin in the context of complex abelian varieties (of dimension at most 5).}

\section{Introduction}
Let $K$ be a field finitely generated over its prime subfield, and let $A$ be an abelian variety over $K$. The action of the absolute Galois group of $K$ on the various Tate modules $T_\ell A$ (for $\ell \neq \operatorname{char} K$) gives a (compatible) family of $\ell$-adic representations of the absolute Galois group of $K$, and most of the relevant information is encoded neatly in a certain family of algebraic groups (denoted $H_\ell(A)$ in what follows, cf. definitions \ref{def_Hl} and \ref{def_Hl_p}). It is thus very natural to try and understand the Galois action on nonsimple varieties in terms of the groups $H_\ell$; the main results of this paper are several sufficient criteria for the equality $H_\ell(A \times B) \cong H_\ell(A) \times H_\ell(B)$ to hold. We start by discussing the case $\operatorname{char} K=0$, which is technically simpler, and prove for example the following $\ell$-adic version, and mild generalization, of a Hodge-theoretical result of Hazama \cite{HazNonsimple}:

{
\renewcommand{\thetheorem}{\ref{thm_Hazama}}
\begin{theorem}
Let $K$ be a \ffn, $A_1$ and $A_2$ be $K$-abelian varieties, and $\ell$ be a prime number. For $i=1,2$ let $\mathfrak{h}_{i}$ be the Lie algebra of $H_\ell(A_i)$. Suppose that the following hold:
\begin{enumerate}
\item for $i=1,2$, the algebra $\mathfrak{h}_i$ is semisimple, so that we can write $\mathfrak{h}_{i} \otimes \mC \cong \mathfrak{h}_{i,1} \oplus \cdots \oplus \mathfrak{h}_{i,n_i}$, where every $\mathfrak{h}_{i,j}$ is simple;

\item for $i=1,2$, there exists a decomposition $V_\ell(A_i) \otimes \mC \cong V_{i,1} \oplus \cdots \oplus V_{i,n_i}$ such that the action of $\mathfrak{h}_{i} \otimes \mC \cong \mathfrak{h}_{i,1} \oplus \cdots \oplus \mathfrak{h}_{i,n_i}$ on $V_{i,1} \oplus \cdots \oplus V_{i,n_i}$ is componentwise and $\mathfrak{h}_{i,j}$ acts faithfully on $V_{i,j}$;

\item for all distinct pairs $(i,j)$ and $(i',j')$ for which there exists an isomorphism $\varphi: \mathfrak{h}_{i,j} \to \mathfrak{h}_{i',j'}$ there is an irreducible $\mathfrak{h}_{i,j}$-representation $W$ such that all simple $\mathfrak{h}_{i,j}$-submodules of $V_{i,j}$ and of $\varphi^*\left( V_{i',j'} \right)$ are isomorphic to $W$, and the highest weight defining $W$ is stable under all automorphisms of $\mathfrak{h}_{i,j}$.

\end{enumerate}

Then either $\operatorname{Hom}_{\barK}(A_1,A_2) \neq 0$ or $H_\ell(A_1 \times A_2) \cong H_\ell(A_1) \times H_\ell(A_2)$.
\end{theorem}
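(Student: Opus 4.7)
The plan is to analyze the Lie algebra $\mathfrak{h}$ of $H_\ell(A_1 \times A_2)$, viewed via the K\"unneth decomposition $V_\ell(A_1 \times A_2) = V_\ell(A_1) \oplus V_\ell(A_2)$ as a subalgebra of $\mathfrak{h}_1 \oplus \mathfrak{h}_2$ whose projections to each summand are surjective; surjectivity comes from the morphism $H_\ell(A_1 \times A_2) \twoheadrightarrow H_\ell(A_i)$ induced by the projection $A_1 \times A_2 \to A_i$. After possibly replacing $K$ by a finite extension---which affects neither $\operatorname{Hom}_{\barK}(A_1, A_2)$ nor the Lie algebras $\mathfrak{h}, \mathfrak{h}_i$---one may assume $H_\ell(A_1 \times A_2)$ is Zariski-connected, so that Galois-equivariance of a map $V_\ell(A_1) \to V_\ell(A_2)$ is equivalent to $\mathfrak{h}$-equivariance. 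By Faltings' isogeny theorem in the setting of finitely generated fields of characteristic zero, a nonzero Galois-equivariant such map yields a nonzero element of $\operatorname{Hom}_{\barK}(A_1, A_2)$, so the goal becomes: if $\mathfrak{h} \neq \mathfrak{h}_1 \oplus \mathfrak{h}_2$, produce a nonzero $\mathfrak{h}$-equivariant map $V_\ell(A_1) \to V_\ell(A_2)$. By flat descent this can be done after base change to $\mC$.

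So assume $\mathfrak{h} \otimes \mC \subsetneq (\mathfrak{h}_1 \oplus \mathfrak{h}_2) \otimes \mC$. A Goursat-type analysis of semisimple Lie subalgebras with surjective projections shows that each simple ideal of $\mathfrak{h} \otimes \mC$ is either \emph{pure} (projecting isomorphically to a unique $\mathfrak{h}_{i,k} \otimes \mC$ and trivially to the other summand) or \emph{diagonal} (projecting isomorphically to some $\mathfrak{h}_{1,j} \otimes \mC$ and to some $\mathfrak{h}_{2,j'} \otimes \mC$, pinning down an isomorphism $\varphi: \mathfrak{h}_{1,j} \to \mathfrak{h}_{2,j'}$); two distinct simple ideals of $\mathfrak{h}$ mutually commute and so cannot both project onto the same simple ideal of $\mathfrak{h}_i$. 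Under the strict inclusion assumption, at least one diagonal must appear, furnishing a pair $(1,j),(2,j')$ and an isomorphism $\varphi$. Applying hypothesis (3) to this pair yields an irreducible $\mathfrak{h}_{1,j}$-module $W$ making both $V_{1,j}$ and $\varphi^*(V_{2,j'})$ $W$-isotypic, and consequently a nonzero $\mathfrak{h}_{1,j}$-linear map $f_0: V_{1,j} \to \varphi^*(V_{2,j'})$.

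Extending $f_0$ by zero on the remaining summands of $V_\ell(A_1) \otimes \mC$ defines $f: V_\ell(A_1) \otimes \mC \to V_\ell(A_2) \otimes \mC$. The crucial remaining check is that $f$ is $\mathfrak{h}$-equivariant: on the diagonal ideal associated to $\varphi$ the actions on $V_{1,j}$ and on $V_{2,j'}$ differ precisely by $\varphi$ by construction, so $f_0$ intertwines them; every other simple ideal of $\mathfrak{h}$ acts trivially on $V_{1,j}$ and on $V_{2,j'}$ in view of hypothesis (2) together with the Goursat uniqueness noted above, so $f$ is equivariant there for vacuous reasons. The main subtlety lies in this bookkeeping of how the various simple ideals of $\mathfrak{h}$ act on the $V_{i,k}$, and in the consistency of applying (3): since the $\varphi$ produced by Goursat is determined only up to composition with an automorphism of $\mathfrak{h}_{1,j}$, it is the stability of the highest weight of $W$ under $\operatorname{Aut}(\mathfrak{h}_{1,j})$ in hypothesis (3) that ensures the construction is independent of this ambiguity.
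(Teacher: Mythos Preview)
Your proof is correct and follows essentially the same strategy as the paper: both reduce to a Goursat-type analysis of the semisimple inclusion $\mathfrak{h}\otimes\mC \hookrightarrow (\mathfrak{h}_1\oplus\mathfrak{h}_2)\otimes\mC$, use hypothesis (3) to produce an $\mathfrak{h}$-equivariant map $V_\ell(A_1)\to V_\ell(A_2)$ whenever the inclusion is strict, and then invoke Faltings' theorem. The only difference is organizational---the paper packages the Goursat step into its Lemma~3.8 (the dichotomy on condition (b2)), whereas you carry out the decomposition into pure and diagonal simple ideals directly in the proof; the mathematical content is the same.
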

\addtocounter{theorem}{-1}
}

From this theorem we deduce many easily applicable criteria, including for example the following result on low-dimensional abelian varieties.
{
\renewcommand{\thetheorem}{\ref{cor_OkWithECAndSurfaces}}
\begin{corollary}
Let $K$ be a \ff\ and $A_1, \ldots, A_n$ be absolutely simple $K$-abelian varieties of dimension at most 2, pairwise non-isogenous over $\barK$. Let $k_1,\ldots,k_n$ be positive integers and $A$ be a $K$-abelian variety that is $\barK$-isogenous to $\prod_{i=1}^n A_i^{k_i}$. Then we have $H_\ell\left( A \right) \cong \prod_{i=1}^n H_\ell(A_i)$, and the Mumford-Tate conjecture holds for $A$.
\end{corollary}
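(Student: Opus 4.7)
The strategy is to derive the corollary from Theorem \ref{thm_Hazama} by induction, combined with the known cases of the Mumford--Tate conjecture in small dimension.

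First I would reduce to the case $A = \prod_{i=1}^n A_i^{k_i}$: the group $H_\ell$ is an isogeny invariant and is unaffected by passing to a finite extension of $K$, since it is defined as a connected component. Because Galois acts diagonally on $A_i^{k_i}$, the diagonal embedding identifies $H_\ell(A_i^{k_i})$ with $H_\ell(A_i)$, so the task reduces to proving
\[ H_\ell\!\left(\prod_{i=1}^n A_i\right) \cong \prod_{i=1}^n H_\ell(A_i). \]

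I would prove this by induction on $n$, applying Theorem \ref{thm_Hazama} to $B := \prod_{i<n} A_i$ and $A_n$ at each step. The Hom hypothesis is immediate from the absolute simplicity of the $A_i$ and the pairwise non-isogeny assumption. For hypotheses (1)--(3), the input is the explicit description of algebraic monodromy groups of absolutely simple abelian varieties of dimension at most $2$: in the non-CM cases $\mathfrak{h}_i$ is semisimple (a form of $\mathfrak{sl}_2$, $\mathfrak{sl}_2 \times \mathfrak{sl}_2$ or $\mathfrak{sp}_4$, according to the endomorphism type) and $V_\ell(A_i)$ decomposes accordingly into tensor products of standard representations whose highest weights are automorphism-stable. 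Hence hypothesis (3) boils down to a case-by-case comparison of the simple factors arising from different $A_i$ in terms of their ranks and the Albert type of $\operatorname{End}^0(A_i)$.

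The main obstacle is the CM factors: if some $A_i$ is of CM type then $\mathfrak{h}_i$ is abelian, and hypothesis (1) of Theorem \ref{thm_Hazama} fails outright. I would circumvent this by splitting $A$ into its CM and non-CM parts, establishing the decomposition of $H_\ell$ for the former via classical CM theory (where $H_\ell$ is a torus explicitly determined by the CM types of the factors), applying Theorem \ref{thm_Hazama} to the latter, and then pasting the two pieces together via a further step that exploits the absence of nonzero homomorphisms between a CM and a non-CM simple abelian variety together with the fact that the Lie algebra of $H_\ell(A)$ projects surjectively onto each $\mathfrak{h}_i$. Once $H_\ell(A) \cong \prod_i H_\ell(A_i)$ is in hand, the Mumford--Tate conjecture for $A$ follows by combining the Hodge-theoretic analogue $\MT(A) \cong \prod_i \MT(A_i)$ (the original theorem of Hazama) with the known validity of the conjecture for each $A_i$ of dimension at most $2$.
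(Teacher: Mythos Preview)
Your overall strategy matches the paper's: reduce to $k_i=1$, separate the CM factors from the non-CM ones, handle the non-CM part via Theorem~\ref{thm_Hazama}, and paste. The non-CM discussion is essentially correct (the paper packages it as Corollary~\ref{cor_ProductSurfaces}, invoking Theorem~\ref{thm_BGK12} rather than listing the Lie algebras by hand).

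The genuine gap is in the CM handling. Your assertion that ``classical CM theory'' yields $H_\ell\bigl(\prod_{i \in \mathrm{CM}} A_i\bigr) \cong \prod_{i \in \mathrm{CM}} H_\ell(A_i)$ is precisely where the content lies: this equality is \emph{false} in general for CM abelian varieties, as Shioda's example of a CM threefold times a CM elliptic curve shows (Remark~\ref{rmk_Shioda}). The paper deals with this by first using the CM case of Mumford--Tate (Theorem~\ref{thm_CenterMT}) to reduce the $\ell$-adic statement to the Hodge-theoretic one, and then invoking a specific computation \cite[Theorem 3.15]{MR2384535} valid for CM factors of dimension at most~$2$. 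Your proposed deduction of Mumford--Tate via ``the original theorem of Hazama'' for the Hodge groups has the same defect: Hazama's Hodge-theoretic criterion carries the same semisimplicity hypothesis, so it does not apply to the CM block, and $\MT(A) \cong \prod_i \MT(A_i)$ is not a consequence of his result. The paper sidesteps this entirely via the rank comparison of Lemma~\ref{lemma_ProductImpliesMT}: once $H_\ell(A) \cong \prod_i H_\ell(A_i)$ is known, Mumford--Tate for $A$ follows directly from Mumford--Tate for each $A_i$, with no need for the Hodge-group product decomposition. Finally, the ``pasting'' of the CM and non-CM pieces is simpler than you suggest: since $H_\ell$ of the non-CM part is semisimple and $H_\ell$ of the CM part is a torus, a rank count (Lemma~\ref{lem_SemisimpleTimesCM}) gives the product splitting immediately, without any appeal to $\operatorname{Hom}$ spaces.
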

\addtocounter{theorem}{-1}
}

On the other hand, as the conditions in theorem \ref{thm_Hazama} are often not easy to check, it would be desirable to describe families of abelian varieties for which they are known to hold; in this direction we prove a result inspired by a paper of Ichikawa \cite{Ichikawa1991}, where a sufficient criterion is given for the equality $H(A \times B) \cong H(A) \times H(B)$ to hold for the Hodge groups of complex abelian varieties. The criterion is expressed in terms of the \textit{relative dimensions} of the factors:

\begin{definition}\label{def_RelDim}
Let $K$ be any field and $A$ be an absolutely simple $K$-abelian variety, so that $\operatorname{End}_{\barK}^0(A)=\operatorname{End}_{\barK}(A) \otimes_{\mathbb{Z}} \mathbb{Q}$ is a division algebra, with center a number field $E$ (either totally real or CM) of degree $e$ over $\mathbb{Q}$. The degree of $\operatorname{End}_{\barK}^0(A)$ over $E$ is a perfect square, which we write as $d^2$; by \textit{type} of $A$ we mean the type of $\operatorname{End}_{\barK}^0(A)$ in the Albert classification. The relative dimension of $A$ is then given by
\[
\operatorname{rel dim}(A) = \begin{cases} \displaystyle \frac{\dim A}{de}, \mbox{ if } A \mbox{ is of type I, II or III}\\[5pt] \displaystyle \frac{2\dim A}{de}, \mbox{ if } A \mbox{ is of type IV}\end{cases}
\]
Equivalently, the relative dimension of $A$ is given by $\displaystyle \frac{\dim A}{de_0}$, where $e_0=[E_0:\mathbb{Q}]$ is the degree over $\mathbb{Q}$ of the maximal totally real subfield $E_0$ of $E$. 
Note that $d=1$ if $A$ is of type I, and $d=2$ if $A$ is of type II or III.
\end{definition}

A Ribet-style lemma (proved in section \ref{subsect_Lemmas}) that slightly generalizes results found in the literature, combined with techniques due to Pink \cite{Pink} and Larsen-Pink \cite{MR1074479}, allows us to prove the following $\ell$-adic analogue of Ichikawa's theorem, which has exactly the same form as the corresponding Hodge-theoretical result: 

{
\renewcommand{\thetheorem}{\ref{thm_Ichikawa_ell}}
\begin{theorem}
Let $K$ be a \ffn\ and $A'_i, A''_j$ (for $i=1,\ldots,n$ and $j=1,\ldots,m$) be absolutely simple $K$-abelian varieties of odd relative dimension that are pairwise non-isogenous over $\barK$. Suppose every $A'_i$ is of type I, II or III in the sense of Albert, and every $A''_j$ is of type IV. Let $A$ be a $K$-abelian variety that is $\overline{K}$-isogenous to $\prod_{i=1}^n A'_i \times \prod_{j=1}^m A''_j$: then
\[
H_\ell \left( A \right) \cong \prod_{i=1}^n H_\ell\left( A'_i\right) \times H_\ell\left( \prod_{j=1}^m A''_j \right).
\]
\end{theorem}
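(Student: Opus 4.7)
The plan is to apply Theorem \ref{thm_Hazama} iteratively (peeling off one factor at a time) together with the Ribet-style lemma from section \ref{subsect_Lemmas}, feeding in the classification of the Lie algebras $\mathfrak{h}_\ell$ of absolutely simple factors that is available through the techniques of Pink and Larsen-Pink. Set $B_i = A'_i$ for $1 \le i \le n$ and $B_{n+1} = \prod_{j=1}^m A''_j$, so that $A \sim_{\barK} B_1 \times \cdots \times B_{n+1}$. The induction reduces to showing, for each $s$, that the pair $\left( B_s,\ \prod_{t \ne s} B_t \right)$ satisfies hypotheses (1)--(3) of \ref{thm_Hazama} and carries no nonzero $\barK$-homomorphisms between its two members. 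The Hom vanishing follows at once from the absolute simplicity and pairwise non-isogeny assumptions; this is also precisely what forces us to keep the type IV varieties grouped in a single block, since internal homomorphisms among the $A''_j$ are ruled out but one cannot further split the block with the tools available here.

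To obtain hypotheses (1) and (2), I would invoke the Pink/Larsen-Pink description of $\mathfrak{h}_\ell$ for an absolutely simple $K$-abelian variety of given Albert type and relative dimension. For odd relative dimension $r_i$ and type I, II or III, the simple factors of $\mathfrak{h}_i \otimes \mC$ are of Dynkin type $C_{r_i}$ (types I and II) or $D_{r_i}$ (type III), acting on each isotypic summand of $V_\ell(A'_i) \otimes \mC$ through the standard representation. For the type IV block $B_{n+1}$, the analogous analysis (combined with the Ribet-style lemma applied inside the block) yields simple factors of Dynkin type $A$ acting on their isotypic components through the standard representation or its dual. This gives the required semisimplicity and componentwise faithful action on both sides of every pair that appears in the induction.

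The heart of the argument is hypothesis (3). When one compares a simple factor coming from $B_s$ with $s \le n$ (type I/II/III) with one coming from $B_{n+1}$ (type IV), the Dynkin types are different: the odd relative dimension assumption rules out the exceptional coincidences $A_1 = B_1 = C_1$ and $A_3 = D_3$, so no isomorphism $\varphi$ exists between the two simple factors and (3) holds vacuously. When one compares simple factors of $B_s$ and $B_t$ with $s, t \le n$ having the same Dynkin type, both sides act through the standard representation, whose highest weight $\omega_1$ is stable under the full automorphism group of $C_r$ (no outer automorphisms) and of $B_r$ or $D_r$ (either no outer automorphisms, or $\omega_1$ lies in the fixed locus), so (3) is satisfied; a common irreducible sub-representation would then force a nonzero Hom and contradict non-isogeny. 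Crucially, this last argument fails within the type IV block, because the outer automorphism of $A_{r-1}$ exchanges $\omega_1$ with $\omega_{r-1}$ and hence the standard with the dual representation; this is the structural reason the $A''_j$ cannot be separated further by the method.

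The main obstacle I expect is the formalization of the inductive step: one must ensure that after peeling off $B_s$ the surviving product $\prod_{t \ne s} B_t$ still has the isotypic, componentwise structure needed to re-apply \ref{thm_Hazama}, and that the classification of simple factors of $\mathfrak{h}_\ell$ from Pink/Larsen-Pink is uniform across the Albert subcases (in particular the two subcases of type II). Once this bookkeeping is handled, the verification of (3) reduces to the Dynkin-type case analysis sketched above, made clean by the odd-relative-dimension assumption.
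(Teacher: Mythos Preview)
Your plan has a genuine gap at the step where you separate the type~IV block $B_{n+1}=\prod_j A''_j$ from the rest using Theorem~\ref{thm_Hazama}. Hypothesis~(1) of that theorem requires \emph{both} Lie algebras $\mathfrak{h}_1,\mathfrak{h}_2$ to be semisimple. But for abelian varieties of type~IV the group $H_\ell$ is never semisimple: the CM center of the endomorphism algebra contributes a nontrivial torus to the center of $H_\ell$. Hence any pair $(B_s,\prod_{t\ne s}B_t)$ in your induction has one member whose Lie algebra fails hypothesis~(1), and Theorem~\ref{thm_Hazama} does not apply. This is not a bookkeeping issue; it is structural.

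The paper circumvents this by treating the two separations differently. The splitting $H_\ell(A')\cong\prod_i H_\ell(A'_i)$ \emph{is} obtained by iterating Theorem~\ref{thm_Hazama}, exactly as you suggest, since all factors there are of type I, II or III and hence have semisimple $H_\ell$. The separation of $A'$ from $A''$, however, is done via Proposition~\ref{prop_DifferentSimpleFactors}, which only requires \emph{one} side to be semisimple and works by showing that the simple factors on the two sides are never isomorphic. Here your Dynkin-type comparison also needs correction: by Proposition~\ref{prop_TypeIII}, type~III with odd relative dimension can produce simple factors of type $A_l$ with $l+1$ a power of~$2$ (acting via $\bigwedge^{(l+1)/2}\operatorname{Std}$), not only $D_r$; conversely, by Proposition~\ref{prop_AdmissibleAlgebras}, type~IV with odd relative dimension gives only $A_l$ with $l+1$ \emph{not} a power of~$2$. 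These two families of $A_l$'s are therefore disjoint, which is what makes Proposition~\ref{prop_DifferentSimpleFactors} go through.
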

\addtocounter{theorem}{-1}
}

In section \ref{sect_PosChar} we then discuss to which extent the previous results apply to finitely generated fields of positive characteristic. It turns out that in this setting the most natural definition of $H_\ell(A)$ is different, and that some additional technical hypotheses must be added to our main results. Theorems \ref{thm_Hazama_p} and \ref{thm_Ichikawa_p} are positive-characteristic versions of theorems \ref{thm_Hazama} and \ref{thm_Ichikawa_ell} respectively; they are slightly weaker than their characteristic-zero counterparts, but are still qualitatively very similar.

\smallskip

Finally, in section \ref{sect_MoonenZarhin} we apply our results to nonsimple varieties of dimension at most 5 defined over finitely generated subfields of $\mathbb{C}$; by studying the product structure of $H_\ell$ we prove the Mumford-Tate conjecture for most such varieties, and in all cases we are able to reproduce in the arithmetical setting results obtained in \cite{MZ} for their Hodge group. Note that \cite{MZ} makes ample use of compactness arguments (for real semisimple groups) that are not available in the $\ell$-adic context and thus need to be replaced in our setting.

\smallskip
\noindent \textbf{Acknowledgements.} It is a pleasure to thank my advisor, N. Ratazzi, for encouraging me to look into the matters studied in this paper, for the many valuable discussions and for his careful reading of this document. I also wish to thank the anonymous referee for his or her many detailed comments and extremely valuable suggestions, that led for example to the introduction of section \ref{sect_PosChar}. This work was partially supported by the FMJH through the grant no ANR-10-CAMP-0151-02 in the ``Programme des Investissements d'Avenir".

\section{Preliminaries}
\subsection{Notation}
Throughout the paper the letter $A$ will be reserved for an abelian variety defined over a field $K$, which we suppose to be finitely generated (over its prime subfield). A field $K$ will be said to be a ``finitely generated subfield of $\mathbb{C}$'' if it is finitely generated over $\mathbb{Q}$ and a distinguished embedding $\sigma:K \hookrightarrow \mathbb{C}$ has been fixed. If $A$ is an abelian variety defined over a finitely generated subfield of $\mathbb{C}$, we will write $A_{\mathbb{C}}$ for the base-change of $A$ to $\mathbb{C}$ along $\sigma$; the symbol $V(A)$ will then denote the first homology group $H_1 \left(A_{\mathbb{C}}(\mathbb{C}),\mathbb{Q}\right)$. We will also denote $\ell$ a prime number, and write $V_\ell(A)$ for $T_\ell(A) \otimes \mathbb{Q}_\ell$, where $T_\ell(A)$ is as usual the $\ell$-adic Tate module of $A$. 

\medskip

If $G$ is an algebraic group we shall write $G^{\operatorname{der}}$ for its derived subgroup, $Z(G)$ for the connected component of its center, and $G^0$ for the connected component of the identity; when $\mathfrak{h}$ is a reductive Lie algebra we shall write $\mathfrak{h}^{\operatorname{ss}}$ for its semisimple part. Finally, if $\varphi: \mathfrak{g} \to \mathfrak{h}$ is a morphism of Lie algebras and $\rho: \mathfrak{h} \to \mathfrak{gl}(V)$ is a representation of $\mathfrak{h}$, we denote $\varphi^*(V)$ the representation $\rho \circ \varphi$ of $\mathfrak{g}$.

\smallskip

\begin{definition}\label{def_StandardRep}
When $\mathfrak{h}$ is a classical Lie algebra (i.e. of Lie type $A_l, B_l, C_l$, or $D_l$), we call \textbf{standard representation} of $\mathfrak{h}$ the one coming from the defining representation of the corresponding algebraic group. It is in all cases the representation with highest weight $\varpi_1$ (in the notation of Bourbaki \cite[Planches I-IV]{Bourbaki79}).
\end{definition}

\subsection{The Hodge group}
We now briefly recall the notion of Hodge group of an abelian variety (defined over an arbitrary subfield $F$ of $\mathbb{C}$), referring the reader to \cite{Moonen_noteson} for more details. To stress that $F$ need not be finitely generated, we depart from our standard notation $A$ and denote $X$ an abelian variety defined over $F$; we denote $X_\mathbb{C}$ the base-change of $X$ to $\mathbb{C}$. The $\mathbb{Q}$-vector space $V(X)=H_1 \left(X_{\mathbb{C}}(\mathbb{C}),\mathbb{Q}\right)$ is naturally endowed with a Hodge structure of type $(-1,0)\oplus(0,-1)$, that is, a decomposition of $\mathbb{C}$-vector spaces $V(X) \otimes \mathbb{C} \cong V(X)^{-1,0} \oplus V(X)^{0,-1}$ such that $\overline{V(X)^{-1,0}}=V(X)^{0,-1}$.

Let $\mu_\infty:\mathbb{G}_{m,\mathbb{C}} \to \GL \left(V(X)_\mathbb{C}\right)$ be the unique cocharacter such that $z \in \mathbb{C}^*$ acts as multiplication by $z$ on $V(X)^{-1,0}$ and trivially on $V(X)^{0,-1}$. The \textbf{Mumford-Tate} group of $X$ is the $\mathbb{Q}$-Zariski closure of the image of $\mu_\infty$, that is to say the smallest $\mathbb{Q}$-algebraic subgroup $\MT(X)$ of $\GL(V(X))$ such that $\mu_\infty$ factors through $\MT(X)_\mathbb{C}$. It is not hard to show that $\MT(X)$ contains the torus of homotheties in $\GL(V(X))$.
\begin{definition}
The \textbf{Hodge group} of $X$ is $H(X)=\left(\MT(X) \cap \operatorname{SL}(V(X))\right)^0$.
\end{definition}

\begin{remark}
The group $\MT(X)$ can be recovered from the knowledge of $H(X)$: indeed, $\MT(X)$ is the almost-direct product of $\mathbb{G}_m$ and $H(X)$ inside $\operatorname{GL}(V(X))$, where $\mathbb{G}_m$ is the central torus of homotheties. 
\end{remark}
It is well known that the group $H(X)$ is connected and reductive, and that there is an isomorphism $\operatorname{End}_{\overline{F}}^0\left(X \right) \cong \operatorname{End}(V(X))^{H(X)}$. Moreover, if $\lambda$ is a polarization of $X_\mathbb{C}$ and $\varphi$ is the bilinear form induced on $V(X)$ by $\lambda$, the group $H(X)$ is contained in $\operatorname{Sp}(V(X),\varphi)$. It is also easy to show that when the $F$-abelian varieties $X_1$ and $X_2$ are isogenous over $\mathbb{C}$ the groups $H(X_1)$ and $H(X_2)$ are isomorphic, and that when $X_\mathbb{C}$ has no simple factor of type IV the group $H(X)$ is semisimple. Finally, we also have some information on the behaviour of $H(X)$ with respect to products: 
\begin{proposition}\label{prop_Products} Let $F$ be a subfield of $\mathbb{C}$ and $X_1,X_2$ be abelian varieties defined over $F$. The group $H(X_1 \times X_2)$ is contained in $H(X_1) \times H(X_2)$, and it projects surjectively on both factors. 

Let $X_1, \ldots, X_k$ be absolutely simple $F$-abelian varieties that are pairwise non-isogenous over $\mathbb{C}$, and let $n_1,\ldots,n_k$ be positive integers. The groups $H(X_1^{n_1} \times \cdots \times X_k^{n_k})$ and $H(X_1 \times \cdots \times X_k)$ are isomorphic.
\end{proposition}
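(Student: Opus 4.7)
The plan is to use the characterization (standard; see \cite{Moonen_noteson}) of $H(X)$ as the $\mathbb{Q}$-Zariski closure in $\GL(V(X))$ of the image of the cocharacter $u_\infty: U(1) \to \GL(V(X))_\mathbb{R}$ that acts as $z$ on $V(X)^{-1,0}$ and as $\bar z$ on $V(X)^{0,-1}$; note that $u_\infty$ automatically factors through $\SL(V(X))_\mathbb{R}$ since $V(X)^{-1,0}$ and $V(X)^{0,-1}$ have the same dimension. With this in hand, both assertions reduce to tracking how $u_\infty$ behaves under direct sums of Hodge structures.

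For part 1, the Hodge structure on $V(X_1 \times X_2) = V(X_1) \oplus V(X_2)$ is the direct sum of those on the factors, so $u_\infty^{X_1 \times X_2}$ factors through the closed $\mathbb{Q}$-algebraic subgroup $H(X_1) \times H(X_2) \hookrightarrow \GL(V(X_1 \times X_2))$ as $(u_\infty^{X_1}, u_\infty^{X_2})$. Minimality of the Zariski closure then yields the containment $H(X_1 \times X_2) \subseteq H(X_1) \times H(X_2)$. For surjectivity of the projection $\pi_i$: since $\pi_i \circ u_\infty^{X_1 \times X_2} = u_\infty^{X_i}$, the $\mathbb{Q}$-algebraic subgroup $\pi_i(H(X_1 \times X_2)) \subseteq H(X_i)$ contains $\operatorname{im}(u_\infty^{X_i})$ and therefore equals $H(X_i)$.

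For part 2, set $Y = X_1 \times \cdots \times X_k$, $Z = X_1^{n_1} \times \cdots \times X_k^{n_k}$, and let $\tilde u = (u_\infty^{X_1}, \ldots, u_\infty^{X_k}): U(1) \to \prod_i \GL(V(X_i))$. Consider the two closed immersions of $\mathbb{Q}$-algebraic groups $\Phi: \prod_i \GL(V(X_i)) \hookrightarrow \GL(V(Y))$, $(g_1, \ldots, g_k) \mapsto g_1 \oplus \cdots \oplus g_k$, and $\Psi: \prod_i \GL(V(X_i)) \hookrightarrow \GL(V(Z))$, $(g_1, \ldots, g_k) \mapsto \bigoplus_i g_i^{\oplus n_i}$. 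Since $V(X_i^{n_i}) = V(X_i)^{\oplus n_i}$ as rational Hodge structures, one verifies directly that $u_\infty^Y = \Phi \circ \tilde u$ and $u_\infty^Z = \Psi \circ \tilde u$. Because closed immersions commute with Zariski closure, both $H(Y)$ and $H(Z)$ arise as isomorphic images (under $\Phi$ and $\Psi$ respectively) of the same $\mathbb{Q}$-Zariski closure $G \subseteq \prod_i \GL(V(X_i))$ of $\operatorname{im}(\tilde u)$; in particular $H(Y) \cong G \cong H(Z)$.

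The main subtlety is really just the bookkeeping with closed immersions: the identity $\overline{f(S)}^{\mathrm{Zar}} = f(\overline{S}^{\mathrm{Zar}})$ for a closed immersion $f$ is what allows one to transport the Hodge group between different ambient general linear groups. I note that the hypothesis that the $X_i$ be absolutely simple and pairwise non-isogenous over $\mathbb{C}$ is not actually used in the argument above for part 2; it presumably serves to fix a canonical ``isogeny-minimal'' reference product $Y$ against which to compare $Z$.
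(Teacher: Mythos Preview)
The paper does not actually prove this proposition; it is stated as a standard fact about Hodge groups, with an implicit reference to \cite{Moonen_noteson}. Your argument is correct and is essentially the standard one: track the defining cocharacter through the direct-sum decomposition and use that $\mathbb{Q}$-Zariski closure commutes with closed immersions of $\mathbb{Q}$-groups. Your remark at the end is also right: the simplicity and non-isogeny hypotheses play no role in the isomorphism $H(X_1^{n_1}\times\cdots\times X_k^{n_k})\cong H(X_1\times\cdots\times X_k)$, which holds for arbitrary $X_i$; those hypotheses are there only so that the product $X_1\times\cdots\times X_k$ is a canonical isogeny representative.
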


\subsection{The groups $H_\ell(A)$}\label{sect_Statement}
Let now $K$ be a \ffn, $A$ be an abelian variety defined over $K$, and $\ell$ be a prime number; recall that we set $V_\ell(A)=T_\ell(A) \otimes \mathbb{Q}_\ell$. The action of $\operatorname{Gal}\left(\overline{K}/K\right)$ on the torsion points of $A$ induces a representation $\rho_\ell:\operatorname{Gal} \left(\barK/K\right) \to \GL(V_\ell(A)) \cong \operatorname{GL}_{2\dim A}(\mathbb{Q}_\ell)$; the Zariski closure of the image of $\rho_\ell$ is called the \textbf{algebraic monodromy group at $\ell$}, and is denoted $G_\ell(A)$. As in the Hodge-theoretical case, it is known that $G_\ell(A)$ contains the homotheties (Bogomolov \cite{Bogomolov}), so that $G_\ell(A)$ is determined by its intersection with $\SL(V_\ell(A))$. This intersection is our main object of study.

\begin{definition}\label{def_Hl}
Let $K$ be a \ffn\ and $A$ be a $K$-abelian variety. We set $H_\ell(A)=\left(G_\ell(A) \cap \SL(V_\ell(A)) \right)^0$.
\end{definition}

Suppose now that we have fixed an embedding $K \hookrightarrow \mathbb{C}$, so that we can speak of the Hodge group of $A$. The Mumford-Tate conjecture predicts that the group $H_\ell(A)$ should be an $\ell$-adic analogue of $H(A)$, and the two groups are indeed known to share many important properties. It is clear by definition that $H_\ell(A)$ is connected; furthermore, by the comparison isomorphism of étale cohomology we can write $V_\ell(A) \cong V(A) \otimes_{\mathbb{Q}} \mathbb{Q}_\ell$, and since $V(A)$ is equipped with a bilinear form $\varphi$ (induced by a polarization) we obtain by extension of scalars a bilinear form $\varphi_\ell$ on $V_\ell(A)$. It is then possible to show that the inclusion $H_\ell(A) \subseteq \operatorname{Sp}(V_\ell(A),\varphi_\ell)$ holds.

Deeper properties of $H_\ell(A)$ are intimately related to Tate's conjecture for abelian varieties, and we summarize them in the following theorem:
\begin{theorem}{(Faltings \cite{MR718935}, \cite{MR766574})}\label{thm_TateConjecture} Let $K$ be a \ffn, $\ell$ be a prime number, and $A,B$ be $K$-abelian varieties. Then $G_\ell(A)$ is a reductive group, and we have
\[
\operatorname{Hom}_{\mathbb{Q}_\ell[G_\ell(A \times B)]} \left( V_\ell(A), V_\ell(B) \right) \cong \operatorname{Hom}_K(A,B) \otimes \mathbb{Q}_\ell.
\]
In particular we have $\operatorname{End}(V_\ell(A))^{G_\ell(A)} \cong \operatorname{End}_K(A) \otimes_{\mathbb{Z}} \mathbb{Q}_\ell$.
\end{theorem}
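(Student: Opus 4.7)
The plan is to reduce the general statement over a finitely generated field of characteristic zero to the case of a number field, and then to invoke the foundational results of Faltings. For the reduction, one spreads out $A$ to an abelian scheme $\mathcal{A} \to S$ over a smooth model $S$ of $K$ over some number field $k$, and uses a Hilbert irreducibility / specialization argument (in the style of Serre and Moret-Bailly) to locate a closed point $s \in S$ such that the image of Galois on $V_\ell(\mathcal{A}_s)$ is open in the image on $V_\ell(A)$; in particular $G_\ell(\mathcal{A}_s) = G_\ell(A)$ after passing to a finite extension. Since $\operatorname{Hom}_K(A,B)$ specializes injectively to $\operatorname{Hom}_{k(s)}(\mathcal{A}_s,\mathcal{B}_s)$, both sides of the desired isomorphism behave well under this reduction, so it suffices to prove the theorem when $K$ is a number field.

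For $K$ a number field, the heart of the argument is Faltings's proof of the Tate conjecture, which yields two separate statements: (a) $V_\ell(A)$ is semisimple as a $\operatorname{Gal}(\barK/K)$-module, and (b) the natural inclusion $\operatorname{End}_K(A) \otimes_{\mathbb{Z}} \mathbb{Z}_\ell \hookrightarrow \operatorname{End}_{\mathbb{Z}_\ell[G_K]}(T_\ell A)$ is an isomorphism. Following Tate's original plan, one argues by contradiction: a Galois-stable $\mathbb{Z}_\ell$-submodule $W \subset T_\ell A$ not of the form $T_\ell A'$ for some isogenous $A'$, or an extra Galois-equivariant endomorphism not coming from $\operatorname{End}_K(A) \otimes \mathbb{Z}_\ell$, would allow one to construct an infinite sequence of pairwise non-isomorphic abelian varieties isogenous to $A$ and having bounded Faltings height, contradicting Faltings's finiteness theorem for polarized abelian varieties of bounded discriminant.

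The $\operatorname{Hom}$ version stated in the theorem follows from the $\operatorname{End}$ version applied to the product $A \times B$: under the identification $V_\ell(A \times B) = V_\ell(A) \oplus V_\ell(B)$, a Galois-equivariant endomorphism decomposes into four blocks, and the off-diagonal blocks are precisely the Galois-equivariant maps $V_\ell(A) \to V_\ell(B)$. The End-version for $A \times B$ thus identifies these with $\operatorname{Hom}_K(A,B) \otimes \mathbb{Q}_\ell$. Finally, the reductivity of $G_\ell(A)$ is a formal consequence of the semisimplicity of $V_\ell(A)$: a linear algebraic group over a field of characteristic zero that admits a faithful semisimple representation is reductive (Chevalley).

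The main obstacle is of course Faltings's finiteness theorem for isogeny classes of polarized abelian varieties with bounded Faltings height; this is the deep arithmetic-geometric input that I would quote as a black box rather than re-prove. A secondary technical point is the specialization step in the reduction from finitely generated $K$ to a number field, where one has to verify that the Galois image survives specialization at sufficiently general closed points and that $\operatorname{Hom}$ groups are preserved --- this is essentially bookkeeping once the right specialization lemma is in hand.
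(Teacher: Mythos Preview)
The paper does not give a proof of this statement: it is quoted as a result of Faltings (with references to \cite{MR718935} and \cite{MR766574}) and used as a black box, the corollary \ref{cor_TateConjecture} being the only consequence drawn from it in the text. Your outline is a reasonable sketch of how the theorem is actually established in the literature --- specialization to a number field, Faltings's finiteness for abelian varieties of bounded height, Tate's reduction of semisimplicity and the $\operatorname{End}$-isomorphism to that finiteness, and the passage from $\operatorname{End}$ to $\operatorname{Hom}$ via $A \times B$ --- but none of this is reproduced in the paper itself, so there is nothing to compare your approach against.
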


\begin{corollary}\label{cor_TateConjecture}
Let $K$ be a \ffn, $A$ and $B$ be abelian varieties defined over $K$, $\ell$ be a prime number, and $\mathfrak{h}_\ell$ be the Lie algebra of $H_\ell(A \times B)$. Suppose $\operatorname{Hom}_{\mathfrak{h}_\ell} \left(V_\ell(A),V_\ell(B)\right) \neq 0$: then $\operatorname{Hom}_{\barK}(A,B) \neq 0$.
\end{corollary}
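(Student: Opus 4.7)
The strategy is to bridge three successive notions of equivariance: $\mathfrak{h}_\ell$-equivariance, $G_\ell^0(A\times B)$-equivariance, and $G_\ell(A\times B)$-equivariance after a finite base change, so that Faltings's Theorem \ref{thm_TateConjecture} can be applied directly.

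First, I would pass to a finite extension $K'/K$ such that the image of $\operatorname{Gal}(\barK/K')$ under $\rho_\ell$ is contained in the identity component $G_\ell^0(A\times B)$. This is a standard reduction: since $G_\ell^0$ has finite index in $G_\ell$, we take $K'$ to be the fixed field of $\rho_\ell^{-1}\bigl(G_\ell^0(A\times B)\bigr)$. Over $K'$ the Zariski closure of the image of Galois on $V_\ell(A\times B)$ is precisely $G_\ell^0(A\times B)$, while the Lie algebra $\mathfrak{h}_\ell$ is unchanged. Note also that if we succeed in producing a nonzero element of $\operatorname{Hom}_{K'}(A,B)$, this immediately gives a nonzero element of $\operatorname{Hom}_{\barK}(A,B)$, which is what we want.

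Next, I would argue that $\operatorname{Hom}_{\mathfrak{h}_\ell}(V_\ell(A),V_\ell(B)) = \operatorname{Hom}_{G_\ell^0(A\times B)}(V_\ell(A),V_\ell(B))$. By Bogomolov's theorem cited in section \ref{sect_Statement}, $G_\ell(A\times B)$ contains the torus of homotheties; hence its Lie algebra $\mathfrak{g}_\ell$ decomposes as $\mathfrak{h}_\ell \oplus \mathbb{Q}_\ell\cdot \operatorname{id}$. Since the homothety $\operatorname{id}$ acts by the same scalar on $V_\ell(A)$ and $V_\ell(B)$, any $\mathbb{Q}_\ell$-linear map $V_\ell(A)\to V_\ell(B)$ is automatically equivariant for $\mathbb{Q}_\ell\cdot\operatorname{id}$. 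Thus Hom as $\mathfrak{h}_\ell$-modules coincides with Hom as $\mathfrak{g}_\ell$-modules, and — since we are in characteristic zero and $G_\ell^0$ is connected — with Hom as $G_\ell^0(A\times B)$-modules.

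Putting the two steps together: the hypothesis $\operatorname{Hom}_{\mathfrak{h}_\ell}(V_\ell(A),V_\ell(B))\neq 0$ gives a nonzero element of $\operatorname{Hom}_{G_\ell^0(A\times B)}(V_\ell(A),V_\ell(B))$, which, after the base change to $K'$, is the same as $\operatorname{Hom}_{\mathbb{Q}_\ell[G_\ell^{K'}(A\times B)]}(V_\ell(A),V_\ell(B))$. By Theorem \ref{thm_TateConjecture} applied over $K'$, this latter group is isomorphic to $\operatorname{Hom}_{K'}(A,B)\otimes\mathbb{Q}_\ell$, so $\operatorname{Hom}_{K'}(A,B)\neq 0$ and hence $\operatorname{Hom}_{\barK}(A,B)\neq 0$. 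The only non-trivial input is Faltings's theorem itself; the rest is formal, and the main minor subtlety is the use of Bogomolov's homothety theorem to pass between $\mathfrak{h}_\ell$ and $\mathfrak{g}_\ell$.
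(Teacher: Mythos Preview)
Your proposal is correct and follows essentially the same approach as the paper's own proof: pass to a finite extension $K'$ so that the Zariski closure of Galois becomes connected, use Bogomolov's theorem to write $\mathfrak{g}_\ell \cong \mathfrak{h}_\ell \oplus \mathbb{Q}_\ell\cdot\operatorname{id}$ so that $\mathfrak{h}_\ell$-equivariance and $\mathfrak{g}_\ell$-equivariance coincide, pass from Lie algebra to group equivariance via connectedness, and then invoke Faltings over $K'$. The only differences are cosmetic (order of exposition), not substantive.
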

\begin{proof}
There is a finite extension $K'$ of $K$ such that the Zariski closure $G_\ell$ of the image of the representation $\operatorname{Gal}\left(\overline{K'}/K' \right) \to \operatorname{Aut}\left( V_\ell(A \times B) \right)$ is connected. We want to show that $\operatorname{Hom}_{K'}(A,B) \neq 0$. By the previous theorem it is enough to prove that $\operatorname{Hom}_{\mathbb{Q}_\ell[G_\ell]} \left(V_\ell(A),V_\ell(B)\right)$ is nontrivial. As $G_\ell$ is connected, an element of $\operatorname{Hom}\left(V_\ell(A),V_\ell(B)\right)$ is $G_\ell$-equivariant if and only if it is equivariant for the action of the Lie algebra $\mathfrak{g}_\ell$ of $G_\ell$. On the other hand, we know there is an isomorphism $\mathfrak{g}_\ell \cong \mathfrak{h}_\ell \oplus \mathbb{Q}_\ell$, where the factor $\mathbb{Q}_\ell$ corresponds to the homotheties. Since any linear map commutes with the action of the homotheties we have $\operatorname{Hom}_{\mathbb{Q}_\ell[G_\ell]} \left(V_\ell(A_1),V_\ell(A_2)\right) \cong \operatorname{Hom}_{\mathfrak{h}_\ell} \left(V_\ell(A_1),V_\ell(A_2)\right)$, and the latter space is nontrivial by hypothesis. Thus $\operatorname{Hom}_{K'}\left(A_1,A_2\right)$, and a fortiori $\operatorname{Hom}_{\barK}\left(A_1,A_2\right)$, are both nontrivial.% \neq 0$.
\end{proof}

Notice furthermore that the group $H_\ell(A)$ is unchanged by finite extensions of the base field $K$, and that if $A, B$ are $K$-abelian varieties that are $\barK$-isogenous we have $H_\ell(A) \cong H_\ell(B)$.

Moreover, $H_\ell(A)$ is semisimple when $A_{\barK}$ does not have any simple factor of type IV (the proof of this fact being the same as for Hodge groups, cf. again \cite{Moonen_noteson}, especially proposition 1.24), and it has the same behaviour as $H(A)$ with respect to products:

\begin{proposition}\label{prop_ProductsEll}
Let $K$ be a \ffn\ and $A_1,A_2$ be $K$-abelian varieties. The group $H_\ell(A_1 \times A_2)$ is contained in $H_\ell(A_1) \times H_\ell(A_2)$, and it projects surjectively on both factors. 

Let $A_1, \ldots, A_k$ be absolutely simple $K$-abelian varieties that are pairwise non-isogenous over $\barK$, and let $n_1,\ldots,n_k$ be positive integers. The groups $H_\ell(A_1^{n_1} \times \cdots \times A_k^{n_k})$ and $H_\ell(A_1 \times \cdots \times A_k)$ are isomorphic.
\end{proposition}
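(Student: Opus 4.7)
The plan is to transplant the Hodge-theoretic proof of Proposition~\ref{prop_Products}, with Faltings' reductivity theorem (Theorem~\ref{thm_TateConjecture}) and Bogomolov's theorem on the presence of scalars in $G_\ell$ playing the roles of their Hodge-theoretic counterparts. Throughout the argument I shall make repeated use of the identity $\det_{V_\ell(A_i)} = \chi_\ell^{\dim A_i}$ as a character of $G_\ell(A_i)$, where $\chi_\ell$ denotes the cyclotomic character; this is a consequence of the Weil-pairing isomorphism $\bigwedge^{2\dim A_i} V_\ell(A_i) \cong \mathbb{Q}_\ell(-\dim A_i)$.

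For the first claim, the projections of $A_1 \times A_2$ onto its two factors induce a Galois-stable decomposition $V_\ell(A_1 \times A_2) = V_\ell(A_1) \oplus V_\ell(A_2)$, which immediately gives the inclusion $G_\ell(A_1 \times A_2) \subseteq G_\ell(A_1) \times G_\ell(A_2)$; surjectivity of each projection $\pi_i\colon G_\ell(A_1\times A_2) \to G_\ell(A_i)$ then follows from Zariski density of the Galois image. To pass to $H_\ell$, I would note that an element $(g_1, g_2) \in G_\ell(A_1 \times A_2) \cap \SL(V_\ell(A_1\times A_2))$ satisfies both $\det(g_1)\det(g_2) = 1$ and, by the cyclotomic identity above, $\det(g_1)^{\dim A_2} = \det(g_2)^{\dim A_1}$; combining these forces $\det(g_1)^{\dim A_1 + \dim A_2}=1$, so $\det(g_1)$ and $\det(g_2)$ are roots of unity, and in particular trivial on the identity component $H_\ell(A_1 \times A_2)$. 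This yields $H_\ell(A_1 \times A_2) \subseteq H_\ell(A_1) \times H_\ell(A_2)$. For surjectivity of $\pi_i$ restricted to $H_\ell(A_1\times A_2)$, I would use that by Bogomolov $G_\ell(A)^0 = \mathbb{G}_m^{\mathrm{scalar}} \cdot H_\ell(A)$ for any $A$ (the scalars map nontrivially onto the one-dimensional reductive quotient $G_\ell(A)^0/H_\ell(A)$, as they are not themselves contained in $\SL$): applying $\pi_i$ to the surjection of $G_\ell$'s and factoring out the scalars on both sides, one gets that $\pi_i(H_\ell(A_1\times A_2))$ and $H_\ell(A_i)$ have the same dimension, which suffices by connectedness.

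For the second claim, observe that $V_\ell\bigl(\prod_i A_i^{n_i}\bigr) \cong \bigoplus_i V_\ell(A_i)^{\oplus n_i}$, with Galois acting on each block as $n_i$ copies of its action on $V_\ell(A_i)$. The block-diagonal embedding $\prod_i \GL(V_\ell(A_i)) \hookrightarrow \prod_i \GL(V_\ell(A_i))^{n_i}$ then identifies the two Galois images and hence their Zariski closures, yielding a canonical isomorphism $G_\ell\bigl(\prod_i A_i^{n_i}\bigr) \cong G_\ell\bigl(\prod_i A_i\bigr)$. Under this identification, the two $H_\ell$-groups are cut out on $G_\ell\bigl(\prod_i A_i\bigr)$ by the equations $\prod_i \det_{V_\ell(A_i)}^{n_i} = 1$ and $\prod_i \det_{V_\ell(A_i)} = 1$ respectively; by the same cyclotomic reduction as in part~1 both equations collapse on the identity component to the single condition $\chi_\ell = 1$, and the two $H_\ell$-groups therefore coincide. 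The main obstacle throughout is precisely this interplay between the $\SL$-condition and the block structure: neither the containment in part~1 nor the isomorphism in part~2 is formal, and both depend essentially on the weight-$1$ relation supplied by the Weil pairing.
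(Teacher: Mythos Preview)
The paper states Proposition~\ref{prop_ProductsEll} without proof, treating it as a standard analogue of the Hodge-theoretic Proposition~\ref{prop_Products}; there is therefore no argument in the paper to compare yours against. Your proof is correct. The key device---using the Weil-pairing identity $\det_{V_\ell(A_i)}=\chi_\ell^{\dim A_i}$ together with Bogomolov's theorem to reconcile the various $\SL$-conditions on the identity component---is exactly the right one, and both parts go through as you describe.

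Two small remarks. First, in part~2 the phrase ``the single condition $\chi_\ell=1$'' is slightly informal, since $\chi_\ell$ is a Galois character rather than an algebraic character of $G_\ell$. What you actually use is the algebraic relation $\alpha^{M}=\beta^{N}$ on $G_\ell$ (with $\alpha=\prod_i\det_{V_\ell(A_i)}^{n_i}$, $\beta=\prod_i\det_{V_\ell(A_i)}$, $M=\sum_i\dim A_i$, $N=\sum_i n_i\dim A_i$), from which the equality of the two identity components follows by the same root-of-unity argument you gave in part~1. Second, your argument for part~2 never invokes the hypotheses that the $A_i$ be absolutely simple or pairwise non-isogenous over $\overline{K}$; this is not a gap, since those hypotheses are in fact unnecessary for the conclusion and appear in the statement only for parallelism with the Hodge-theoretic version.
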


\medskip

We also have some information about the structure of $V_\ell(A)$ as a representation of $H_\ell(A)$:

\begin{theorem}{(Pink, \cite[Corollary 5.11]{Pink})}\label{thm_Pink} Let $K$ be a \ffn, $A$ be a $K$-abelian variety, $\ell$ be a prime number, and $\mathfrak{h}_\ell(A)$ be the Lie algebra of $H_\ell(A)$. Write $\mathfrak{h}_\ell(A) \otimes \mC \cong \mathfrak{c} \oplus \bigoplus_{i=1}^n \mathfrak{h}_i$, where $\mathfrak{c}$ is abelian and each $\mathfrak{h}_i$ is simple. Let $W$ be a simple submodule of $V_\ell(A) \otimes \mC$ for the action of $(\mathfrak{h}_\ell(A) \otimes \mC)$, decomposed as $W \cong C \otimes \bigotimes_{i=1}^n W_i$, where each $W_i$ is a simple module over $\mathfrak{h}_i$ and $C$ is a 1-dimensional representation of $\mathfrak{c}$. Then:

\begin{enumerate}
\item each $\mathfrak{h}_i$ is of classical type (i.e. of Lie type $A_l, B_l, C_l$ or $D_l$ for some $l$);
\item if $W_i$ is nontrivial, then the highest weight of $\mathfrak{h}_i$ in $W_i$ is minuscule.
\end{enumerate}
\end{theorem}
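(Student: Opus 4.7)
The approach is to combine $p$-adic Hodge theory at a good-reduction place above $\ell$ with representation-theoretic rigidity, following Pink.

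First, fix a place $v$ of $K$ above $\ell$ at which $A$ has good reduction. The restriction of $V_\ell(A)$ to the decomposition group at $v$ is crystalline, hence Hodge--Tate, with Hodge--Tate weights $0$ and $1$, each of multiplicity $\dim A$. By Sen's theorem in its group-theoretic form, this Hodge--Tate structure is induced by a cocharacter $\mu : \mathbb{G}_{m,\mC} \to G_\ell(A)_{\mC}$ whose weights on $V_\ell(A) \otimes \mC$ are exactly $0$ and $1$. Using that $G_\ell(A)$ contains the homotheties, one can shift $\mu$ by a scalar cocharacter to obtain a cocharacter (still called $\mu$) of $H_\ell(A)_{\mC}$ whose weights on $V_\ell(A) \otimes \mC$ still lie in a set of two consecutive integers.

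Next, decompose $\mathfrak{h}_\ell(A) \otimes \mC \cong \mathfrak{c} \oplus \bigoplus_{i=1}^n \mathfrak{h}_i$ and choose a Cartan subalgebra containing the image of $\mu$, so that $\mu = \mu_{\mathfrak{c}} + \sum_{i=1}^n \mu_i$ with $\mu_i$ in a Cartan of $\mathfrak{h}_i$. On a simple submodule $W \cong C \otimes \bigotimes_i W_i$, the $\mu$-weight of a pure tensor of weight vectors is $c + \sum_i \nu_i$, where $c$ is the scalar by which $\mu_{\mathfrak{c}}$ acts on $C$ and $\nu_i$ is a $\mu_i$-weight on $W_i$. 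Requiring every such sum to land in two consecutive integers forces each nontrivial $W_i$ to carry exactly two $\mu_i$-weights, differing by $1$. A standard argument (as in Deligne's analysis of the Hodge decomposition of an abelian variety) then shows that a nontrivial irreducible representation of a simple Lie algebra on which a nonzero cocharacter acts with only two weights differing by $1$ must be minuscule: the weight set is forced by Weyl invariance to lie on two parallel affine hyperplanes, and the only way for this to be compatible with transitivity of the Weyl group on extremal weights is for the weights to form a single Weyl orbit. This establishes part~(2).

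Since $\mathfrak{h}_i$ acts faithfully on $V_\ell(A) \otimes \mC$, some simple submodule has $W_i$ nontrivial, so by part~(2) this $W_i$ has minuscule highest weight. Hence $\mathfrak{h}_i$ admits a nontrivial minuscule representation, which restricts its type to $A_l$, $B_l$, $C_l$, $D_l$, $E_6$, or $E_7$; the remaining exceptional types $E_8$, $F_4$, $G_2$ are immediately excluded. To rule out $E_6$ and $E_7$ one must bring in global arithmetic information: the density of Frobenius conjugacy classes at unramified places of $K$, the Weil bound on their eigenvalues, and the symplectic self-duality of $V_\ell(A)$ coming from the Weil pairing together yield numerical constraints on characteristic polynomials of elements of $H_\ell(A)$ that are incompatible with $\mathfrak{h}_i \in \{E_6, E_7\}$, leaving only the classical types. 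This is precisely the main obstacle: the local $p$-adic Hodge-theoretic input suffices for minusculeness but not for the classical-type conclusion, and the elimination of the two exceptional minuscule types via global input constitutes the technical heart of Pink's argument.
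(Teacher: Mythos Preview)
The paper does not give a proof of this theorem: it is quoted as a result of Pink, and the only argument the paper supplies is the remark immediately following the statement, namely that Pink's version (stated for number fields) extends to arbitrary finitely generated fields of characteristic zero by the specialization argument of Proposition~\ref{prop_Specialization}. Your proposal is therefore a sketch of Pink's original proof rather than something to be compared with a proof in this paper; in particular, you have not addressed the one thing the paper does add, namely the passage from number fields to finitely generated fields.

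As a sketch of Pink's argument, the set-up is right: crystalline (hence Hodge--Tate) theory at a good place produces a cocharacter $\mu$ of $G_\ell(A)_{\mC}$ with weights $\{0,1\}$ on $V_\ell(A)\otimes\mC$, which one pushes into $H_\ell(A)_{\mC}$ after shifting by homotheties. Two steps, however, do not go as you describe. First, the claim that the two-weight constraint on $W$ ``forces each nontrivial $W_i$ to carry exactly two $\mu_i$-weights'' is not justified by a single cocharacter: if one writes $\mu=\sum_i\mu_i$ and several $W_i$ are nontrivial, the condition that the total weight lie in $\{0,1\}$ only forces at most one $\mu_i$ to act nontrivially on its $W_i$, and says nothing about factors $\mathfrak{h}_i$ on which this particular $\mu$ projects to zero. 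What Pink actually uses is that the $G_\ell(A)^0$-conjugates of $\mu$ generate $G_\ell(A)^0$; running the two-weight argument over all conjugates is what forces every nontrivial $W_i$ to be minuscule. Second, the elimination of $E_6$ and $E_7$ is not carried out via Frobenius densities or the Weil bound on eigenvalues; the Riemann hypothesis for abelian varieties over finite fields plays no role here. Pink's mechanism is his classification of (weak, resp.\ strong) Mumford--Tate triples: once one knows that $(G_\ell(A)^0, V_\ell(A), \mu)$ is such a triple, the combinatorial analysis carried out in \S4 of \cite{Pink} rules out the exceptional factors directly. The global arithmetic input is entirely absorbed into the existence of $\mu$ and the generation property, not into numerical constraints on characteristic polynomials.
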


\begin{remark}
This theorem is stated in \cite{Pink} only for number fields. The version for finitely generated fields follows easily by a specialization argument (cf. also proposition \ref{prop_Specialization} below).
\end{remark}

For the reader's convenience and future reference, we reproduce the full list of minuscule weights for classical Lie algebras, as given for example in \cite{Bourbaki79} (Chapter 8, Section 3 and Tables 1 and 2); the last column of this table contains $+1$ if the corresponding representation is orthogonal, $-1$ if it is symplectic, and 0 if it is not self-dual.

\begin{table}[!ht]
\noindent   \makebox[\textwidth]{ \begin{tabular}{  l  l  l  l }
     \hline
     \textbf{Root system} & \textbf{Minuscule weight} & \textbf{Dimension} & \textbf{Duality properties} \\ \hline \\[-0.3cm]

		     $A_l \, (l \geq 1)$ & $\omega_r, 1 \leq r \leq l$ & $\displaystyle \binom{l+1}{r}$ & \begin{tabular}{@{}l@{}} $(-1)^r$, if $\displaystyle r=\frac{l+1}{2}$\\ 0, if $\displaystyle r \neq \frac{l+1}{2}$ \end{tabular} \\ \\[-0.3cm] \hline

		$B_l \, (l \geq 2)$ & $\omega_l$ & $2^l$ & \begin{tabular}{@{}l@{}} $+1, \mbox{ if } l \equiv 3,0 \pmod 4$ \\ $-1, \mbox{ if } l \equiv 1,2 \pmod 4$ \end{tabular} \\ \hline

     $C_l \, (l \geq 3)$ & $\omega_1$ & $2l$ & $-1$ \\ \hline

\multirow{2}{*}[-0.8cm]		{$D_l \, (l \geq 4)$} & $\omega_1$ & $2l$ & $+1$  \\ \cline{2-4}
                                        & $\omega_{l-1}, \omega_l$ & $2^{l-1}$ & \begin{tabular}{@{}l@{}} $+1, \mbox{ if } l \equiv 0 \pmod 4$ \\ $-1, \mbox{ if } l \equiv 2 \pmod 4$ \\ $0, \mbox{ if } l \equiv 1 \pmod 2$ \end{tabular} \\ \hline
     \end{tabular}
		} 
\caption{Minuscule weights}
\end{table}

\subsection{Known results towards the Mumford-Tate conjecture}\label{sect_Results}
Let $K$ be again a field finitely generated over $\mathbb{Q}$, and $A$ be an abelian variety over $K$. Fix any embedding $\sigma : K \hookrightarrow \mathbb{C}$, so that we can regard $K$ as a subfield of $\mathbb{C}$, and the Mumford-Tate and Hodge groups of $A$ are defined. The celebrated Mumford-Tate conjecture predicts that the equality $G_\ell(A)^0 = \MT(A) \otimes \mathbb{Q}_\ell$ should hold for every prime $\ell$; equivalently, for every $A$ and $\ell$ we should have $H_\ell(A) \cong H(A) \otimes \mathbb{Q}_\ell$. Note that both sides of this equality are invariant under finite extensions of $K$ and isogenies: in particular, if $A$ and $B$ are $K$-abelian varieties that are $\overline{K}$-isogenous, the conjecture holds for $A$ if and only if it holds for $B$.

Even though the general case of the conjecture is still wide open, many partial results have proven, and we shall now recall a number of them that we will need in what follows. Let us start with the following proposition, which allows a reduction of the problem to the case of $K$ being a number field:
\begin{proposition}{(Serre, Noot, \cite[Proposition 1.3]{MR1355123})}\label{prop_Specialization}
Let $\ell$ be a prime, $K$ be a finitely generated subfield of $\mathbb{C}$ and $A$ be a $K$-abelian variety. There exist a number field $L$, a specialization $B$ of $A$ over $L$, and identifications $H_1(A_\mathbb{C}(\mathbb{C}),\mathbb{Q}) \cong H_1(B_\mathbb{C}(\mathbb{C}),\mathbb{Q})$ and $T_\ell(A) \cong T_\ell(B)$ (compatible with the comparison isomorphism in étale cohomology) such that $\MT (A) =\MT(B)$ and $G_\ell(A) = G_\ell(B)$ under the given identifications.
\end{proposition}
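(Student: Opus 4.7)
The plan is to combine a spreading-out construction with two Zariski-density (or genericity) statements: one for the Mumford-Tate group, one for the $\ell$-adic monodromy group.

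First I would spread out: after enlarging $K$ if necessary by a finite extension (which changes none of the relevant invariants, since $H_\ell$ and the connected Mumford-Tate group are stable under finite extensions of the base field), choose a smooth integral $\mathbb{Q}$-scheme $S$ of finite type with function field $K$, embed $S$ into $\mathbb{C}$ via the chosen $\sigma$, and extend $A$ to an abelian scheme $\mathcal{A} \to S$ with generic fiber $A$. For any closed point $s \in S$, the residue field $\kappa(s)$ is a number field, and $\mathcal{A}_s$ is an abelian variety over it. Smooth proper base change for $\ell$-adic cohomology, together with the Artin comparison isomorphism, provides compatible identifications $T_\ell(A) \cong T_\ell(\mathcal{A}_s)$ and $H_1(A_\mathbb{C}(\mathbb{C}),\mathbb{Q}) \cong H_1(\mathcal{A}_s(\mathbb{C}),\mathbb{Q})$ after fixing a geometric path from $s$ to the generic point.

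Next I would invoke two genericity results. On the Hodge side, by Cattani--Deligne--Kaplan (or the variant used by Noot), the \emph{Hodge-exceptional locus} in $S(\mathbb{C})$, i.e.\ the set of points where $\MT(\mathcal{A}_s)$ is strictly smaller than $\MT(A)$, is a countable union of proper closed algebraic subvarieties of $S$. On the $\ell$-adic side, by a Hilbert irreducibility argument due to Serre (using that $G_\ell(A)$ is the Zariski closure of an open compact subgroup of $G_\ell(A)(\mathbb{Q}_\ell)$, so dropping the algebraic monodromy corresponds to the Galois representation factoring through a proper closed subgroup), the set of closed points $s$ where $G_\ell(\mathcal{A}_s) \subsetneq G_\ell(A)$ under the identification above is \emph{thin} in the sense of Serre. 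Hence the set $\Sigma$ of closed points $s \in S$ such that simultaneously $\MT(\mathcal{A}_s) = \MT(A)$ and $G_\ell(\mathcal{A}_s) = G_\ell(A)$ is the complement of a thin set united with a countable union of proper Zariski closed subsets.

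Finally I would prove $\Sigma \neq \emptyset$: Hilbert's irreducibility theorem for number fields implies that a Hilbert subset of $S$ (the complement of a thin set) cannot be covered by countably many proper closed subvarieties defined over $\overline{\mathbb{Q}}$, so one can pick a closed point $s \in \Sigma$. Setting $L = \kappa(s)$ and $B = \mathcal{A}_s$ yields the proposition.

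The main obstacle will be the second genericity statement: one has to know that the generic $\ell$-adic monodromy group is preserved for a thin-complement of specializations. This is where one needs to quote (or carefully set up) Serre's openness theorem for the image of Galois in a family—the analogous density statement for Mumford-Tate is, by contrast, a purely Hodge-theoretic fact and is easier to arrange once $\mathcal{A}/S$ has been chosen.
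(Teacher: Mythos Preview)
The paper does not supply a proof of this proposition at all: it is simply quoted from the literature, with attribution to Serre and Noot (specifically \cite[Proposition~1.3]{MR1355123}). There is therefore no ``paper's own proof'' to compare against.

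That said, your sketch is essentially the standard argument underlying the cited reference. A couple of minor remarks: invoking Cattani--Deligne--Kaplan is heavier than necessary, since the statement that the Hodge-exceptional locus is a countable union of proper closed analytic subvarieties is elementary and predates CDK (the full algebraicity of Hodge loci is not required here); and in the last step you should be a bit careful, since the Hodge-exceptional subvarieties are a priori only defined over $\mathbb{C}$, not over $\overline{\mathbb{Q}}$, so the cleanest way to conclude is to first restrict to an uncountable Hilbertian subfield (or argue via a Baire-type statement) before applying Hilbert irreducibility. These are technicalities rather than gaps, and the overall strategy is correct.
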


This proposition implies in particular that most results which are known for number fields and depend on a single prime $\ell$ automatically propagate to \ffs. This applies to all the theorems we list in this section, some of which were originally stated only for number fields.

\begin{theorem}{(Piatetskii-Shapiro, Borovoi, Deligne \cite[I, Proposition 6.2]{DeligneInclusion})} Let $K$ be a \ff\ and $A$ be a $K$-abelian variety. For every prime $\ell$ we have the inclusion $G_\ell(A)^0 \subseteq \MT(A) \otimes \mathbb{Q}_\ell$.
\end{theorem}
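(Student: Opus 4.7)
The plan is to give the standard tannakian proof: one characterizes $\MT(A)$ as the joint stabilizer of all Hodge tensors on $V(A)$, then transfers this characterization to the $\ell$-adic setting via the comparison isomorphism and Deligne's theorem that Hodge cycles on abelian varieties are absolute Hodge.

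First, by Proposition \ref{prop_Specialization} I may assume that $K$ is a number field. Replacing $K$ by a suitable finite extension (which affects neither $\MT(A)$ nor $G_\ell(A)^0$) I may further assume that $G_\ell(A)$ is already connected, so that $G_\ell(A) = G_\ell(A)^0$; it then suffices to prove $G_\ell(A) \subseteq \MT(A) \otimes \mathbb{Q}_\ell$.

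Recall that $\MT(A)$ admits the following tannakian description: it is the subgroup of $\GL(V(A))$ that, acting diagonally on every tensor space $T^{m,n}(V(A)) = V(A)^{\otimes m} \otimes V(A)^{\vee \otimes n}$, fixes every Hodge class (that is, every rational class whose $\mathbb{C}$-linear extension lies in the $(0,0)$-piece of the induced Hodge decomposition of $T^{m,n}(V(A)) \otimes \mathbb{C}$). Via the comparison isomorphism $V_\ell(A) \cong V(A) \otimes_{\mathbb{Q}} \mathbb{Q}_\ell$ one obtains the parallel characterization of $\MT(A) \otimes \mathbb{Q}_\ell$ inside $\GL(V_\ell(A))$ as the joint stabilizer of the $\mathbb{Q}_\ell$-linear extensions of precisely the same Hodge classes. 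Thus the desired inclusion reduces to the claim that the action of $\operatorname{Gal}(\overline{K}/K)$ on $T^{m,n}(V_\ell(A))$ fixes the $\ell$-adic image of every Hodge class $c \in T^{m,n}(V(A))$.

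That last statement is (essentially by definition) a consequence of Deligne's theorem ``Hodge cycles on abelian varieties are absolute Hodge'' (\emph{cf.} Deligne--Milne--Ogus--Shih, \emph{Hodge cycles, motives, and Shimura varieties}): a Hodge class of type $(0,0)$ in $T^{m,n}(V(A))$ has total weight zero, so no Tate twist is needed, and the absolute-Hodge property implies precisely that its $\ell$-adic realization is fixed by the absolute Galois group of $K$. Taking Zariski closures yields $G_\ell(A) \subseteq \MT(A) \otimes \mathbb{Q}_\ell$, hence $G_\ell(A)^0 \subseteq \MT(A) \otimes \mathbb{Q}_\ell$ without any connectedness hypothesis.

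The genuinely nontrivial ingredient here is Deligne's absolute Hodge theorem for abelian varieties; everything else is formal tannakian manipulation together with the elementary observations that $\MT(A)$ and $G_\ell(A)^0$ are invariant under finite base extensions and under the specialization of Proposition \ref{prop_Specialization}. I note that the original proofs of Piatetskii-Shapiro and Borovoi predate Deligne's result and proceed more directly, exploiting only the Hodge classes produced by endomorphisms and polarizations; the tannakian route sketched above is the shortest modern argument.
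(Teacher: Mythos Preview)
The paper does not give its own proof of this theorem: it is quoted as a known result, attributed to Piatetskii-Shapiro, Borovoi, and Deligne with a reference to \cite[I, Proposition 6.2]{DeligneInclusion}, so there is no in-paper argument against which to compare your proposal.

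Your argument is correct and is precisely the standard modern route via Deligne's theorem on absolute Hodge cycles. One small point worth making explicit: after you pass to a finite extension making $G_\ell(A)$ connected, you then invoke absolute Hodge to conclude that each Hodge tensor $c$ has Galois-invariant $\ell$-adic realization. Strictly speaking, absolute Hodge only guarantees invariance under an \emph{open} subgroup of $\operatorname{Gal}(\overline{K}/K)$, and a priori that subgroup depends on $c$. This causes no trouble, however: the Zariski closure of the image of any open subgroup has the same identity component $G_\ell(A)^0$, so $G_\ell(A)^0$ fixes each $c_\ell$ regardless. You could make this explicit, or alternatively note that $\MT(A)$, being an algebraic subgroup of $\GL(V(A))$, is cut out by finitely many Hodge tensors, so a single finite extension suffices. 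Either way the conclusion stands.
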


\begin{theorem}{(Pink, \cite[Theorem 4.3]{MR1339927})}\label{thm_Pink1}
Let $K$ be a \ff\ and $A$ be a $K$-abelian variety. Suppose that the equality $\operatorname{rk}(H(A)) = \operatorname{rk}(H_\ell(A))$ holds for one prime $\ell$: then $H_\ell(A) = H(A) \otimes \mathbb{Q}_\ell$ holds for every prime $\ell$. In particular, if the Mumford-Tate conjecture holds for one prime, then it holds for every prime.
\end{theorem}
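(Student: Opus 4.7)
The plan is to work in three steps: (i) reduce to the case of a number field, (ii) promote the rank equality to an equality of groups at the given prime $\ell$, and (iii) propagate the equality to every prime via $\ell$-independence of ranks. For step (i), I would apply Proposition \ref{prop_Specialization} to specialize $A$ to an abelian variety over a number field without altering either $\operatorname{MT}$ or $G_\ell$; so I may assume $K$ is a number field throughout. Bogomolov's theorem places the central torus of scalars inside $G_\ell(A)$, so the Mumford-Tate conjecture $G_\ell(A)^0 = \operatorname{MT}(A) \otimes \mathbb{Q}_\ell$ is equivalent to its semisimple counterpart $H_\ell(A) = H(A) \otimes \mathbb{Q}_\ell$, with which I would work throughout; the Piatetskii-Shapiro--Borovoi--Deligne theorem already provides an inclusion $H_\ell(A) \subseteq H(A) \otimes \mathbb{Q}_\ell$ of connected reductive $\mathbb{Q}_\ell$-groups.

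For step (ii), suppose the rank equality holds at the given prime $\ell$. A maximal torus $T \subseteq H_\ell(A)$ is then automatically maximal in $H(A) \otimes \mathbb{Q}_\ell$, so the two groups share the same $T$-weight decomposition of $V_\ell(A) \otimes \mC$, that is, the same formal character. By Theorem \ref{thm_Pink} each irreducible constituent of $V_\ell(A) \otimes \mC$ as a module over $H(A) \otimes \mC$ has minuscule highest weight, and the explicit Table 1 above completely controls the possible weight orbits. The rigidity of minuscule representations -- made precise via Larsen--Pink-type arguments, cf.\ \cite{MR1074479} -- implies that a connected reductive subgroup of $H(A) \otimes \mathbb{Q}_\ell$ with the same formal character on $V_\ell(A)$ must coincide with $H(A) \otimes \mathbb{Q}_\ell$, giving the desired equality $H_\ell(A) = H(A) \otimes \mathbb{Q}_\ell$.

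For step (iii), I would invoke Serre's theorem that $\operatorname{rk} G_\ell(A)$ (equivalently $\operatorname{rk} H_\ell(A)$) is independent of $\ell$, proved via Frobenius tori and Chebotarev density: once the rank equality is known at a single prime $\ell_0$, it holds at every prime, and a uniform application of step (ii) yields $H_\ell(A) = H(A) \otimes \mathbb{Q}_\ell$ for every $\ell$. The main obstacle is step (ii): in general an inclusion of connected reductive groups of equal rank need not be an equality (as the Borel--de Siebenthal classification of maximal-rank subgroups makes clear), and what rescues us here is precisely the rigidity imposed by the minuscule structure of $V_\ell(A)$ provided by Theorem \ref{thm_Pink} -- without it, the formal-character coincidence alone would not force equality of the two reductive subgroups of $\operatorname{GL}(V_\ell(A))$.
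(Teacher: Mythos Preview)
The paper does not prove this statement: it is quoted, with attribution, as \cite[Theorem 4.3]{MR1339927}. So there is no ``paper's own proof'' to compare against; what follows is an assessment of your sketch on its own merits.

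Your overall architecture (reduce to a number field via Proposition~\ref{prop_Specialization}; upgrade the rank equality to an equality of groups at the given prime; propagate to all primes using Serre's $\ell$-independence of the rank) is exactly right, and step~(iii) is fine. The problem is step~(ii): the rigidity you invoke is not true as stated, and the reference to \cite{MR1074479} does not supply it. Concretely, take the block-diagonal inclusion $\operatorname{SL}_2^3 \hookrightarrow \operatorname{Sp}_6$ acting on the standard $6$-dimensional representation (which is minuscule for $\operatorname{Sp}_6$). The two groups share a maximal torus and have \emph{identical} formal character on this module (weights $\pm e_1,\pm e_2,\pm e_3$ in both cases), yet the inclusion is strict. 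Thus ``same formal character on a minuscule module'' does not force equality of connected reductive groups, even in the semisimple case; your own parenthetical about Borel--de~Siebenthal subgroups was a warning you should have heeded. (A smaller slip: Theorem~\ref{thm_Pink} is stated for $H_\ell(A)$, not for $H(A)$; the minuscule property of the Hodge representation is a separate, classical fact.)

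What actually closes the gap in Pink's argument is not the formal character of $V_\ell(A)$ but the \emph{cocharacter} data: both $\operatorname{MT}(A)$ and $G_\ell(A)^0$ are generated (over $\overline{\mathbb{Q}}$, resp.\ $\overline{\mathbb{Q}_\ell}$) by the conjugates of a cocharacter whose weights on $V$ lie in $\{0,1\}$ --- the Hodge cocharacter $\mu_\infty$ on the one side, and Pink's $\ell$-adic analogue on the other. Pink's notion of a \emph{weak Mumford--Tate pair} encodes exactly this, and his classification of such pairs is what turns ``equal rank plus $G_\ell^0\subseteq\operatorname{MT}\otimes\mathbb{Q}_\ell$'' into equality. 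In the toy example above, $\operatorname{SL}_2^3$ contains \emph{no} nontrivial cocharacter with weights in $\{0,1\}$ on the $6$-dimensional module (any nonzero integer weight $a$ forces $-a$ to appear as well), so it is excluded by Pink's criterion even though the formal-character test cannot see the difference. If you want to repair your sketch, replace the Larsen--Pink formal-character appeal by this cocharacter argument; alternatively, you could bring in Faltings' theorem to match commutants (which also kills the $\operatorname{SL}_2^3\subset\operatorname{Sp}_6$ example), but you would still need to explain why equal rank plus equal commutant suffices, and that again leads back to Pink's analysis.
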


\begin{theorem}{(Vasiu, \cite[Theorem 1.3.1]{Vasiu}; cf. also Ullmo-Yafaev, \cite[Corollary 2.11]{Ullmo})}\label{thm_CenterMT}
Let $K$ be a \ff\ and $A$ be a $K$-abelian variety. For every prime $\ell$ we have $Z(H_\ell(A)) \cong Z(H(A)) \otimes \mathbb{Q}_\ell$. In particular, the Mumford-Tate conjecture is true for CM abelian varieties.
\end{theorem}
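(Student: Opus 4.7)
The plan is to first reduce to the case where $K$ is a number field via the Serre--Noot specialization result (proposition \ref{prop_Specialization}): this produces a specialization $B$ of $A$ over a number field $L$ whose Mumford--Tate and $\ell$-adic monodromy groups agree with those of $A$, and in particular the same holds for the centers $Z(H(A))$ and $Z(H_\ell(A))$. Throughout the rest of the argument I would identify $V_\ell(A)$ with $V(A)\otimes\mathbb{Q}_\ell$ via the comparison isomorphism, so that both centers can be compared inside $\SL(V_\ell(A))$.

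I would then dispose of the CM case first. If $A$ is a CM abelian variety, then $\MT(A)$ is a torus and $H(A)=Z(H(A))$; the Main Theorem of Complex Multiplication (Shimura--Taniyama) gives a completely explicit description of the $\ell$-adic representation, from which one reads that after a finite extension of $K$ the image of Galois is open in $\MT(A)(\mathbb{Q}_\ell)$. Intersecting with $\SL(V_\ell(A))$ yields $H_\ell(A)=H(A)\otimes\mathbb{Q}_\ell = Z(H(A))\otimes\mathbb{Q}_\ell$, which settles the ``in particular'' clause and provides a proof of the main equality in the CM situation.

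For an arbitrary $A$ the next step is to realize the torus $T:=Z(H(A))^0$ motivically. The cocharacter $\mu_\infty$ projected to $T$ turns the character lattice $X^*(T)$ into a polarizable $\mathbb{Q}$-Hodge structure of CM type; by the Langlands--Deligne classification of CM tori one may embed $T$ into $\mathrm{Res}_{E/\mathbb{Q}}\mathbb{G}_m$ for a suitable CM algebra $E$, and attach to these data a CM abelian variety $B$ whose Mumford--Tate group surjects onto $T$. Applying the CM case to $B$ and transporting the resulting information along the compatible systems of $\ell$-adic realizations yields $Z(H_\ell(A))\supseteq T\otimes\mathbb{Q}_\ell$; the reverse inclusion is formal from $H_\ell(A)\subseteq H(A)\otimes\mathbb{Q}_\ell$ (Piatetskii-Shapiro--Borovoi--Deligne) combined with Faltings' theorem \ref{thm_TateConjecture}, which pins down the commutant of $H_\ell(A)$ in $\mathrm{End}(V_\ell(A))$ and thereby forces any central element to lie in the central torus of $H(A)\otimes\mathbb{Q}_\ell$.

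The main obstacle is the motivic descent of $T=Z(H(A))^0$ to an \emph{actual} CM abelian variety over a number field: $T$ is given only abstractly as a quotient of $\MT(A)$, need not arise as the Mumford--Tate group of any subvariety of $A$, and realizing the pair $(T,\mu_\infty|_T)$ as a Shimura datum of PEL or abelian type requires nontrivial input from the theory of Shimura varieties. An alternative, and arguably cleaner, route is that of Vasiu: one avoids constructing $B$ altogether by working directly with integral $p$-adic Hodge theory, comparing the $p$-adic étale realization of the center with its Hodge realization through the Fontaine functors and deducing the desired equality from a careful analysis of the Newton and Hodge cocharacters attached to $T$.
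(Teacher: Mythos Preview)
The paper does not prove this theorem at all: it is quoted as a result of Vasiu (with an alternative reference to Ullmo--Yafaev), and the only additional content is the remark that the CM case goes back to Pohlmann. There is therefore no in-paper argument to compare your proposal against.

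That said, your outline is a faithful summary of the two approaches actually taken in the cited references. The inclusion $Z(H_\ell(A))\subseteq Z(H(A))\otimes\mathbb{Q}_\ell$ is indeed the formal one: an element of $Z(H_\ell(A))$ lies in $H(A)\otimes\mathbb{Q}_\ell$ by Deligne's inclusion, and by Faltings it lies in the commutant of $H_\ell(A)$, which coincides with the commutant of $H(A)\otimes\mathbb{Q}_\ell$; hence it is central there. The substantive direction is the opposite one, namely showing $Z(H(A))\otimes\mathbb{Q}_\ell\subseteq H_\ell(A)$, and you correctly isolate the obstacle: producing a CM abelian variety $B$ whose Mumford--Tate torus dominates $T=Z(H(A))^0$ compatibly with the $\ell$-adic realizations. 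This is precisely the content of the Ullmo--Yafaev argument (via special points on the ambient Shimura variety), and it is not a step one can simply wave through. Your alternative of invoking Vasiu's integral $p$-adic Hodge theory is likewise correct in spirit but left as a pointer.

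Two minor inaccuracies: the sentence ``the cocharacter $\mu_\infty$ projected to $T$ turns the character lattice $X^*(T)$ into a polarizable $\mathbb{Q}$-Hodge structure of CM type'' is garbled --- $X^*(T)$ carries a Galois action, not a Hodge structure; what one actually uses is that $(T,\mu_\infty|_T)$ is a zero-dimensional Shimura datum. And ``Langlands--Deligne classification of CM tori'' is not standard nomenclature for what you need here. In summary, your proposal is a correct roadmap of the literature rather than a proof, which is exactly what the paper itself provides.
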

\begin{remark}
The CM case of the Mumford-Tate conjecture was first proved by Pohlmann \cite{MR0228500}.
\end{remark}

The following proposition follows immediately upon combining the previous three theorems:
\begin{proposition}\label{prop_EnoughToConsiderSS}
Let $K$ be a \ff\ and $A$ be a $K$-abelian variety. Suppose that for one prime number $\ell$ we have $\operatorname{rk}(H(A)^{\operatorname{der}}) \leq \operatorname{rk}(H_\ell(A)^{\operatorname{der}})$: then the Mumford-Tate conjecture holds for $A$. The same is true if (for some prime $\ell$) we have $\operatorname{rk} H(A) \leq \operatorname{rk} H_\ell(A)$.
\end{proposition}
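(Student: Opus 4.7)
The plan is to chain together the three theorems already recalled — the Deligne/Piatetskii-Shapiro/Borovoi inclusion, Vasiu's result on the centre, and Pink's rank criterion (Theorem \ref{thm_Pink1}) — in order to upgrade a rank inequality on derived groups to a genuine equality of ranks on the full Hodge/monodromy groups, which is precisely the hypothesis of Pink's theorem.

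First I would unwind the assumption $\operatorname{rk}(H(A)^{\operatorname{der}}) \leq \operatorname{rk}(H_\ell(A)^{\operatorname{der}})$. The inclusion $G_\ell(A)^0 \subseteq \MT(A) \otimes \mathbb{Q}_\ell$ immediately yields $H_\ell(A) \subseteq H(A) \otimes \mathbb{Q}_\ell$, and taking derived subgroups gives $H_\ell(A)^{\operatorname{der}} \subseteq (H(A) \otimes \mathbb{Q}_\ell)^{\operatorname{der}} = H(A)^{\operatorname{der}} \otimes \mathbb{Q}_\ell$. Since derived subgroups of connected reductive groups have the same rank as their ambient group's semisimple part, this produces the opposite inequality $\operatorname{rk}(H_\ell(A)^{\operatorname{der}}) \leq \operatorname{rk}(H(A)^{\operatorname{der}})$, hence equality.

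Next I would convert this into an equality of ranks of the full groups using Theorem \ref{thm_CenterMT}. For any connected reductive group $G$ one has $\operatorname{rk}(G) = \operatorname{rk}(G^{\operatorname{der}}) + \dim Z(G)$, where $Z(G)$ is the connected component of the centre. Applying this to both $H_\ell(A)$ and $H(A) \otimes \mathbb{Q}_\ell$ and invoking Vasiu's isomorphism $Z(H_\ell(A)) \cong Z(H(A)) \otimes \mathbb{Q}_\ell$ gives $\operatorname{rk}(H_\ell(A)) = \operatorname{rk}(H(A))$. Now Theorem \ref{thm_Pink1} applies and the Mumford-Tate conjecture follows for $A$.

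For the second assertion, starting from $\operatorname{rk}(H(A)) \leq \operatorname{rk}(H_\ell(A))$ the argument is even shorter: the inclusion $H_\ell(A) \subseteq H(A) \otimes \mathbb{Q}_\ell$ forces the reverse inequality, so the ranks already agree and Theorem \ref{thm_Pink1} concludes. There is no genuine obstacle here; the only point requiring attention is to make sure that the formula $\operatorname{rk}(G) = \operatorname{rk}(G^{\operatorname{der}}) + \dim Z(G)$ is applied in a setting where it is valid, which is guaranteed by the reductivity of $H_\ell(A)$ (from Faltings, Theorem \ref{thm_TateConjecture}) and of $H(A)$.
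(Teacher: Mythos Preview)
Your proof is correct and follows exactly the approach the paper indicates: the paper simply states that the proposition ``follows immediately upon combining the previous three theorems'' (the Deligne inclusion, Pink's rank criterion, and Vasiu's theorem on centres), and you have spelled out precisely how this combination works. There is nothing to add.
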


\medskip

In a different direction, many results are known for absolutely simple abelian varieties of specific dimensions:

\begin{theorem}{(Serre, \cite{SerreAbelianRepr})}\label{thm_EC}
The Mumford-Tate conjecture is true for elliptic curves (over \ffs).
\end{theorem}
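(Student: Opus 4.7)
The plan is to reduce to the case of a number field via the specialization principle, and then split into the CM and non-CM cases, each treated by assembling the results already recalled in the excerpt.

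By Proposition \ref{prop_Specialization} I may assume $K$ is a number field, and after a finite extension of $K$ (which affects neither $H_\ell(E)$ nor $H(E) \otimes \mathbb{Q}_\ell$) I may also assume that every geometric endomorphism of $E$ is defined over $K$ and that $G_\ell(E)$ is connected for the prime $\ell$ to be chosen below. Standard computations with the Hodge structure on $V(E)$, together with the identity $\operatorname{End}(V(E))^{H(E)} = \operatorname{End}_{\bar K}^0(E)$, show that $H(E)$ is a one-dimensional non-split torus if $E$ has CM, and $H(E) = \operatorname{SL}_{2,\mathbb{Q}}$ otherwise.

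If $E$ has CM, then the statement is an instance of Theorem \ref{thm_CenterMT} (the CM case of the Mumford-Tate conjecture, originally due to Pohlmann): $H(E)$ equals its own center, and the theorem identifies $Z(H_\ell(E))$ with $Z(H(E)) \otimes \mathbb{Q}_\ell$, which forces $H_\ell(E)$ to equal this one-dimensional torus as well. If instead $E$ is non-CM, I would invoke Proposition \ref{prop_EnoughToConsiderSS}: since $H(E) = \operatorname{SL}_{2,\mathbb{Q}}$ has rank one, it is enough to exhibit a prime $\ell$ for which $H_\ell(E)$ is non-trivial. Suppose for contradiction that $H_\ell(E) = 1$. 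Because $G_\ell(E) = G_\ell(E)^0$ contains the homotheties by Bogomolov's theorem, $G_\ell(E)$ would then coincide with the central torus $\mathbb{G}_m \subseteq \operatorname{GL}(V_\ell(E))$, and its commutant in $\operatorname{End}(V_\ell(E))$ would be the full matrix algebra $M_2(\mathbb{Q}_\ell)$. But Theorem \ref{thm_TateConjecture} identifies that commutant with $\operatorname{End}_K^0(E) \otimes \mathbb{Q}_\ell = \mathbb{Q}_\ell$, a contradiction. Hence $H_\ell(E) \neq 1$, and Proposition \ref{prop_EnoughToConsiderSS} concludes.

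The substantive inputs are Faltings's theorem (in the form of Theorem \ref{thm_TateConjecture}) and the CM case of the conjecture (Theorem \ref{thm_CenterMT}); the rest is essentially a dimension count. The only step requiring any thought is the commutant argument making Proposition \ref{prop_EnoughToConsiderSS} applicable in the non-CM case, where one must exclude the possibility that $H_\ell(E)$ is trivial without first knowing the full structure of $G_\ell(E)$; this is the main, though modest, obstacle.
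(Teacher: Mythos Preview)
The paper does not actually give its own proof of this statement; it simply records the result with a citation to Serre's \emph{Abelian $\ell$-adic representations}. There is therefore no ``paper's proof'' to compare against.

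Your argument is correct and is entirely self-contained relative to the results assembled in Section~2. The one point worth making explicit is that a nontrivial connected reductive group has rank at least~$1$, so ``$H_\ell(E)\neq 1$'' really does give $\operatorname{rk} H_\ell(E)\geq 1=\operatorname{rk} H(E)$ and Proposition~\ref{prop_EnoughToConsiderSS} applies; you implicitly use this but do not say it.

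It is perhaps worth noting that your route is quite different from Serre's original one. Serre's book predates Faltings's theorem (Theorem~\ref{thm_TateConjecture}), Bogomolov's theorem on homotheties, and the rank results of Pink and Vasiu underlying Proposition~\ref{prop_EnoughToConsiderSS}. His argument in the non-CM case proceeds via the theory of locally algebraic abelian $\ell$-adic representations and a direct analysis showing that the image of Galois is open in $\operatorname{GL}_2(\mathbb{Z}_\ell)$. Your proof is instead a short deduction from the heavy machinery already collected in the paper; this is the natural argument in the present context, but one should be aware that it is logically posterior to (and considerably more expensive than) the result being proved.
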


\begin{theorem}{(Tanke'ev, Ribet, \cite[Theorems 1, 2 and 3]{Ribet83classeson})}\label{thm_PrimeDimension}
The Mumford-Tate conjecture is true for absolutely simple abelian varieties of prime dimension (over \ffs).
\end{theorem}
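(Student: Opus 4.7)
The argument follows the classical strategy of Ribet and Tankeev: reduce the Mumford-Tate conjecture to a rank inequality and then exploit the severe constraints that the primality of $p=\dim A$ places on both $H(A)$ and $H_\ell(A)$.

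First, by Proposition \ref{prop_Specialization} I may assume that $K$ is a number field. Setting $D=\operatorname{End}_{\barK}^0(A)$ and combining Albert's classification with the divisibility conditions recorded in Definition \ref{def_RelDim}, the primality of $p$ reduces the possibilities for $D$ to a short list: either $A$ is of CM type (so that Theorem \ref{thm_CenterMT} concludes at once), or $D$ is one of $\mathbb{Q}$, a totally real field of degree $p$, an indefinite quaternion algebra over $\mathbb{Q}$ (possible only when $p=2$), or an imaginary quadratic field acting on $A$ with relative dimension $p$ (the non-CM Type IV case).

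In each non-CM case, $H(A)$ is determined directly. For $D=\mathbb{Q}$, Tankeev's theorem gives $H(A)_{\mathbb{C}}=\operatorname{Sp}_{2p}$: the prime-dimension hypothesis rules out all nontrivial Hodge tensors beyond the polarization form, and the semisimplicity of $H(A)$ together with the faithful symplectic representation of dimension $2p$ forces the full symplectic group. In the remaining cases $H(A)$ is identified with the semisimple part of the centralizer of $D$ in $\operatorname{Sp}(V(A),\varphi)$, and its rank is read off from the classical description of Hermitian-symmetric groups (e.g.\ $\operatorname{SL}_2^p$ for the real-multiplication case, $\operatorname{SL}_2$ for the $p=2$ quaternion case, and $\operatorname{SU}(a,b)$ with $a+b=p$ for the imaginary quadratic case). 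On the Galois side, Faltings' Theorem \ref{thm_TateConjecture} gives the crucial identification $\operatorname{End}_{G_\ell(A)}\bigl(V_\ell(A)\bigr)\cong D\otimes\mathbb{Q}_\ell$, placing $H_\ell(A)$ in the same centralizer as $H(A)$. Pink's Theorem \ref{thm_Pink} then forces each simple factor of $\mathfrak{h}_\ell(A)\otimes\mC$ to be of classical type, acting on $V_\ell(A)\otimes\mC$ through a tensor product of minuscule representations; since $\dim V_\ell(A)=2p$ with $p$ prime, the admissible factorizations are extremely rigid. A case-by-case dimension inspection then yields $\operatorname{rk}(H(A))\leq\operatorname{rk}(H_\ell(A))$, and Proposition \ref{prop_EnoughToConsiderSS} concludes.

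The main obstacle is the Type IV case in which $D$ is an imaginary quadratic field: the behaviour of $V_\ell(A)$ as an $H_\ell(A)$-module depends on whether $\ell$ splits in $D$, and one must rule out the possibility that $H_\ell(A)$ decomposes as a product of several small classical factors acting on pieces of $V_\ell(A)$ of dimension strictly less than $2p$, each compatible with Pink's classical/minuscule constraint but yielding a rank strictly smaller than that of $\operatorname{SU}(a,b)$. It is precisely the primality of $p$ that forbids these unwanted factorizations of $2p$, forcing $H_\ell(A)$ to be of rank at least $p-1$ and completing the comparison with $H(A)$.
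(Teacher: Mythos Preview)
The paper does not supply its own proof of this theorem: it is stated as a known result with a citation to Ribet \cite{Ribet83classeson}, and no argument is given. There is therefore nothing in the paper to compare your proposal against.

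As for the content of your sketch: the overall architecture (reduce to a number field, list the possible endomorphism algebras via Albert plus the primality constraint, compute $H(A)$ case by case, and then bound $\operatorname{rk} H_\ell(A)$ from below to invoke Proposition~\ref{prop_EnoughToConsiderSS}) is the standard modern reformulation of Ribet's argument, and is sound in outline. Two remarks are in order. First, your appeal to Pink's minuscule-weights theorem (Theorem~\ref{thm_Pink}) is anachronistic relative to the cited 1983 proof, which proceeds by a direct analysis of the Lie algebra of the image of Galois rather than by invoking a general structural result proved only later; using Theorem~\ref{thm_Pink} is legitimate and non-circular, but it is not how the theorem was originally established. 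Second, the phrase ``a case-by-case dimension inspection then yields $\operatorname{rk}(H(A))\leq\operatorname{rk}(H_\ell(A))$'' hides essentially all of the work: in the case $D=\mathbb{Q}$ one must actually exclude the possibility that $\mathfrak{h}_\ell\otimes\mC$ is, say, a product of several copies of $\mathfrak{sl}_2$ acting via a tensor product of standard representations (the total dimension $2p$ and the symplectic constraint do most of the job, but this needs to be written out), and in the non-CM type~IV case you yourself flag the difficulty without resolving it. What you have written is a correct road map, not a proof.
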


\begin{theorem}{(Moonen, Zarhin, \cite{Moonen95hodgeand})}\label{thm_MT4}
Let $K$ be a \ff\ and $A$ be an absolutely simple $K$-abelian variety of dimension 4. If $\operatorname{End}_{\barK}(A) \neq \mathbb{Z}$, then the Mumford-Tate conjecture holds for $A$. If $\operatorname{End}_{\barK}(A)=\mathbb{Z}$, then either for all primes $\ell$ we have $H_\ell(A) \cong \operatorname{Sp}_{8,\mathbb{Q}_\ell}$ and Mumford-Tate holds for $A$, or else for all $\ell$ the group $H_\ell(A)$ is isogenous to a $\mathbb{Q}_\ell$-form of $\operatorname{SL}_2^3$.
\end{theorem}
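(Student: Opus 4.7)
My strategy follows the same general lines as the original argument of Moonen and Zarhin, but can be streamlined using the results collected in this section. By Proposition \ref{prop_Specialization} I reduce to the case where $K$ is a number field, since both $\MT(A)$ and $G_\ell(A)$ are preserved under specialization. The proof then splits according to whether $\operatorname{End}_{\barK}^0(A)=\mathbb{Q}$ or not.

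Suppose first that $\operatorname{End}_{\barK}^0(A)\neq \mathbb{Q}$. I would enumerate the possible Albert types for the division algebra $\operatorname{End}_{\barK}^0(A)$ attached to an absolutely simple fourfold, using the numerical constraints recalled in Definition \ref{def_RelDim}. For each such type the commutant conditions $\operatorname{End}(V(A))^{H(A)}\cong \operatorname{End}_{\barK}^0(A)$ and $\operatorname{End}(V_\ell(A))^{G_\ell(A)}\cong \operatorname{End}_{\barK}^0(A)\otimes\mathbb{Q}_\ell$ (Theorem \ref{thm_TateConjecture}) leave very little room for $H(A)$ and $H_\ell(A)$; combining Theorem \ref{thm_CenterMT} (which matches the connected centres of $H(A)$ and $H_\ell(A)$) with a rank comparison handled by Theorem \ref{thm_Pink1} and Proposition \ref{prop_EnoughToConsiderSS} one verifies the Mumford-Tate conjecture case by case. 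The fully CM situation (Type IV with CM field of degree $8$) is immediate from Theorem \ref{thm_CenterMT}.

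Suppose now that $\operatorname{End}_{\barK}(A)=\mathbb{Z}$. By Theorem \ref{thm_TateConjecture} the commutant of $H_\ell(A)$ in $\operatorname{End}(V_\ell(A))$ is just $\mathbb{Q}_\ell$, so $V_\ell(A)\otimes \mC$ is isotypic as a module over $\mathfrak{h}_\ell(A)\otimes \mC$; together with Pink's Theorem \ref{thm_Pink} this forces $V_\ell(A)\otimes \mC$ to be \emph{irreducible}, hence isomorphic to a tensor product $\bigotimes_{i=1}^n W_i$ of minuscule representations $W_i$ of simple classical Lie algebras $\mathfrak{h}_i$ with $\prod_i \dim W_i=8$. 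Moreover $V_\ell(A)$ carries a symplectic form preserved by $H_\ell(A)$, so the tensor product must be self-dual and of symplectic type, which forces every $W_i$ to be self-dual and an odd number of the $W_i$ to be symplectic. A direct inspection of the table of minuscule weights above then leaves only two possibilities: either $n=1$ with $\mathfrak{h}_1=\mathfrak{sp}_8$ acting by its standard representation, or $n=3$ with $\mathfrak{h}_1\cong\mathfrak{h}_2\cong\mathfrak{h}_3\cong\mathfrak{sl}_2$ acting by the tensor product of three standard representations. All other candidates (root systems $B_3$ or $D_4$ with an $8$-dimensional minuscule representation, and $\mathfrak{sl}_2\oplus\mathfrak{sp}_4$ acting by standard tensor standard on $2\cdot 4=8$) are ruled out because the resulting invariant form is orthogonal rather than symplectic.

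It remains to extract the $\ell$-uniform statement. If for some prime $\ell$ we are in the first subcase, then $\operatorname{rk} H_\ell(A)=4=\operatorname{rk}\operatorname{Sp}_8$, and since $H_\ell(A)\subseteq H(A)\otimes\mathbb{Q}_\ell$ while $H(A)\subseteq \operatorname{Sp}_8$ we obtain $\operatorname{rk} H(A)=\operatorname{rk} H_\ell(A)$; Theorem \ref{thm_Pink1} then gives $H_\ell(A)=H(A)\otimes\mathbb{Q}_\ell=\operatorname{Sp}_{8,\mathbb{Q}_\ell}$ for every $\ell$ together with the Mumford-Tate conjecture. Otherwise $\operatorname{rk} H_\ell(A)=3$ for every prime $\ell$, and the classification above shows that $H_\ell(A)$ is a $\mathbb{Q}_\ell$-form of $\operatorname{SL}_2^3$ for every $\ell$. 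I expect the main obstacle to be precisely the minuscule-weight enumeration in the third paragraph: the careful bookkeeping of self-duality and of orthogonal-versus-symplectic type under tensor products is what rules out all candidates apart from the two appearing in the statement.
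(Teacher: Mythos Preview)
The paper does not supply a proof of this statement: Theorem~\ref{thm_MT4} is quoted from Moonen--Zarhin \cite{Moonen95hodgeand} and used as input, so there is no ``paper's own proof'' to compare against. What you have written is, in outline, a fair summary of how the original Moonen--Zarhin argument runs, and the case $\operatorname{End}_{\barK}(A)=\mathbb{Z}$ is handled essentially correctly: the commutant being $\mathbb{Q}_\ell$ forces $V_\ell(A)\otimes\mC$ to be absolutely irreducible (not just isotypic), the semisimplicity of $H_\ell(A)$ kills the abelian factor $\mathfrak{c}$ in Pink's decomposition, and your self-duality/sign bookkeeping through the table of minuscule weights is accurate and exhaustive. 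The passage to $\ell$-independence via Theorem~\ref{thm_Pink1} is also correct.

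By contrast, your treatment of the case $\operatorname{End}_{\barK}^0(A)\neq\mathbb{Q}$ is only a placeholder. Saying that the commutant conditions ``leave very little room'' and that ``one verifies the Mumford-Tate conjecture case by case'' hides essentially all of the work: in \cite{Moonen95hodgeand} this involves a genuine enumeration of Albert types I(2), I(4), II(1), II(2), III(1), III(2) and several IV$(d,e)$ subcases, each requiring its own identification of $H(A)$ and $H_\ell(A)$ (and in some IV cases a nontrivial analysis of exceptional Weil classes). None of this follows from the rank and centre statements you invoke alone. So while nothing you wrote is wrong, the $\operatorname{End}_{\barK}^0(A)\neq\mathbb{Q}$ paragraph is not yet a proof, only a pointer to one.
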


\begin{remark} The preprint \cite{MT4} announces a proof of the Mumford-Tate conjecture for absolutely simple abelian fourfolds $A$ with $\operatorname{End}_{\barK}(A)=\mathbb{Z}$. In what follows we shall not need this fact, whose only effect would be to slightly simplify the statement of theorem \ref{thm_MT5}.
\end{remark}

There are some common elements to the proofs of all the dimension-specific results we just listed, and we shall try to capture them in definition \ref{def_GeneralLefschetz} below. We now try to motivate this definition. As the group $H_\ell(A)$ is reductive and connected, most of its structure is encoded by the $\mathbb{Q}_\ell$-Lie algebra $\mathfrak{h}_\ell(A)=\operatorname{Lie}(H_\ell(A))$; extending scalars to $\mC$, this Lie algebra can be written as $\mathfrak{h}_\ell(A) \otimes \mC \cong \mathfrak{c} \oplus \bigoplus_{i=1}^n \mathfrak{h}_i$, with $\mathfrak{c}$ abelian and each $\mathfrak{h}_i$ simple. The proofs of theorems \ref{thm_EC} and \ref{thm_PrimeDimension} yield information about the structure of this Lie algebra:
%The arguments used to prove theorems \ref{thm_EC} and \ref{thm_PrimeDimension} also show:
\begin{proposition}\label{prop_LefschetzType}
Let $K$ be a \ff\ and $A/K$ be an absolutely simple abelian variety whose dimension is either 1 or a prime number. Fix a prime $\ell$ and let $\mathfrak{h}_\ell(A)$ be the Lie algebra of $H_\ell(A)$. Suppose $A$ is not of type IV. Then the following hold:
\begin{itemize}
\item the Lie algebra $\mathfrak{h}_\ell(A) \otimes \mC$ admits a decomposition $\mathfrak{h}_1 \oplus \cdots \oplus \mathfrak{h}_n$, where each simple factor $\mathfrak{h}_i$ is of Lie type $\mathfrak{sp}_k$ for some $k$;

\item
for each $i=1,\ldots,n$ there exists a (not necessarily simple) $\mathfrak{h_i}$-module $W_i$ such that $V_\ell(A) \otimes \mC$ is isomorphic to $W_1 \oplus \cdots \oplus W_n$, the action of $\mathfrak{h}_1 \oplus \ldots \oplus \mathfrak{h}_n$ on $W_1 \oplus \cdots \oplus W_n$ is componentwise, and $\mathfrak{h}_i$ acts faithfully on $W_i$;
\item every module $W_i$ is a direct sum of copies of the standard representation of $\mathfrak{h}_i$ (cf. definition \ref{def_StandardRep}).
\end{itemize}
\end{proposition}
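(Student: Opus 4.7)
The plan is to combine the Albert classification in small dimension with the Mumford-Tate conjecture (known in these cases) and then invoke well-known descriptions of the Hodge group.

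\textbf{Classification and reduction to the Hodge group.} Using the integrality of the relative dimension (and its parity for type III), together with the hypothesis that $\dim A$ is $1$ or prime and that $A$ is not of type IV, one checks that $A$ must fall into one of the following four cases: (a) an elliptic curve without CM; (b) absolutely simple of prime dimension $p$ of type I with $\operatorname{End}^0_{\barK}(A) = \mathbb{Q}$; (c) absolutely simple of prime dimension $p$ of type I with $\operatorname{End}^0_{\barK}(A)$ totally real of degree $p$; or (d) an absolutely simple QM-surface (type II, dimension $2$). Types II and III of odd prime dimension are excluded by the divisibility conditions $2e \mid \dim A$ (type II) and $4e \mid \dim A$ (type III), while type II in dimension $2$ forces $e=1$. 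By theorems \ref{thm_EC} and \ref{thm_PrimeDimension} the Mumford-Tate conjecture is known for $A$, so via proposition \ref{prop_Specialization} we may replace the pair $(\mathfrak{h}_\ell(A) \otimes \mC,\, V_\ell(A) \otimes \mC)$ by $(\operatorname{Lie}(H(A)) \otimes \mC,\, V(A) \otimes \mC)$.

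\textbf{Case-by-case conclusion.} Each of the four cases admits a classical description of $H(A)$: in (a), $H(A) = \operatorname{SL}_{2,\mathbb{Q}}$ on the standard representation; in (b), $H(A) = \operatorname{Sp}_{2p,\mathbb{Q}}$ on the standard representation (Tankeev-Ribet); in (c), $H(A) = \operatorname{Res}_{E/\mathbb{Q}} \operatorname{SL}_{2,E}$, which becomes $\operatorname{SL}_2^{\,p}$ after $\otimes \mC$, acting componentwise on $\bigoplus_{\sigma : E \hookrightarrow \mC} \mC^2$; in (d), $H(A) = \operatorname{SL}_{2,\mathbb{Q}}$ with $V(A)$ free of rank $1$ over the quaternion algebra $D$, so that $V(A) \otimes \mC$ is a sum of two copies of the standard $\operatorname{SL}_2$-representation. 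In each of these cases the three conclusions of the proposition are immediate, with every simple factor of type $\mathfrak{sp}$ acting on a sum of copies of the standard representation.

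\textbf{Main obstacle.} The only non-routine ingredient is the Hilbert-Blumenthal case (c): one must check both that $H(A)$ actually equals $\operatorname{Res}_{E/\mathbb{Q}} \operatorname{SL}_{2,E}$ (and not a proper $\mathbb{Q}$-subgroup, which is where the simplicity of $A$ and the known form of the Mumford-Tate group of a Hilbert-Blumenthal variety are used) and that after extending scalars the action splits as $p$ independent copies of $\operatorname{SL}_2$ on the summands indexed by the embeddings of $E$. Both statements follow from the standard analysis of the Hodge decomposition of a Hilbert-Blumenthal abelian variety together with the fact that $E \otimes \mC \cong \mC^p$ acts faithfully on $V(A) \otimes \mC$.
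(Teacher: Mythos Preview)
The paper does not supply an explicit proof of this proposition; it simply presents it as a consequence of the proofs of theorems \ref{thm_EC} (Serre) and \ref{thm_PrimeDimension} (Tankeev--Ribet). Your argument is the natural way to make this precise: you enumerate the possible Albert types in dimension $1$ or prime, invoke the Mumford--Tate conjecture (established precisely by those theorems) to transfer the question to the Hodge side, and then read off the well-known descriptions of $H(A)$ in each of the four cases. This is correct and is exactly what the paper has in mind.

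One small inaccuracy worth flagging: the divisibility you quote for type III, namely $4e \mid \dim A$, is not the standard restriction (which is only $2e \mid \dim A$). What actually excludes an absolutely simple type III surface in characteristic zero is that for $\operatorname{rel\,dim}(A)=1$ the associated symmetric domain is a point, so the variety is CM and hence of type IV; equivalently one needs $\dim A \geq 4e$, not $4e \mid \dim A$. This does not affect your case analysis: for $\dim A$ an odd prime already $2e \mid \dim A$ is impossible, and for $\dim A = 2$ the argument just mentioned (which the paper also cites in the proof of corollary \ref{cor_ProductSurfaces}) rules out type III.
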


Trying to isolate the essential features of this proposition, and taking into account theorem \ref{thm_Pink}, we are led to the following definition:  %We can then introduce the following

\begin{definition}\label{def_GeneralLefschetz}
Let $K$ be a finitely generated field of characteristic zero, $A/K$ be an abelian variety, and $\mathfrak{h}_\ell(A)$ be the Lie algebra of $H_\ell(A)$. We can write $\mathfrak{h}_\ell(A) \otimes \mC \cong \mathfrak{c} \oplus \mathfrak{h}_1 \oplus \cdots \oplus \mathfrak{h}_n$, where $\mathfrak{c}$ is abelian and each factor $\mathfrak{h}_i$ is simple and (by theorem \ref{thm_Pink}) of classical type. We say that $A$ is of \textbf{general Lefschetz type} (with respect to the prime $\ell$) if it is absolutely simple, not of type IV, and following hold:

\begin{enumerate}
\item
for each $i=1,\ldots,n$ there exists a (not necessarily simple) $\mathfrak{h_i}$-module $W_i$ such that $V_\ell(A) \otimes \mC$ is isomorphic to $W_1 \oplus \cdots \oplus W_n$, where the action of $\mathfrak{h}_1 \oplus \ldots \oplus \mathfrak{h}_n$ on $W_1 \oplus \cdots \oplus W_n$ is componentwise, and $\mathfrak{h}_i$ acts faithfully on $W_i$;
\item if the simple Lie algebra $\mathfrak{h}_i$ is of Lie type $A_l$, the rank $l$ is odd and $W_i$ is a direct sum of copies of $\bigwedge^{\frac{l+1}{2}} \operatorname{Std}$, where $\operatorname{Std}$ is the standard representation of $\mathfrak{h}_i$ (cf. definition \ref{def_StandardRep});
\item if the simple algebra $\mathfrak{h}_i$ is of Lie type $B_l$, the module $W_i$ is a direct sum of copies of the (spinor) representation defined by the highest weight $\omega_l$ (in the notation of \cite[Planches I-IV]{Bourbaki79});
\item if the simple algebra $\mathfrak{h}_i$ is of Lie type $C_l$ or $D_l$, the module $W_i$ is a direct sum of copies of the standard representation of $\mathfrak{h}_i$.
\end{enumerate}
We shall simply say that $A$ is of general Lefschetz type (without further specification) when properties (1)-(4) hold with respect to every prime $\ell$.
\end{definition}

\begin{remark}
As proved in \cite[Lemma 2.3]{Murty}, when $A$ is a complex abelian variety of type I or II the action of the Lefschetz group of $A$ on $V(A) \otimes \mathbb{C}$ has precisely this structure.% (at least when no simple factor is of type $A_l$).
\end{remark}

%\medskip

Several instances of this situation have been studied, for example in a series of papers by Banaszak, Gajda and Kraso\'n. Among various other results, for abelian varieties of type I and II they prove:% the following classification results (recall that $G_\ell(A)=H_\ell(A) \cdot \mathbb{G}_m$, cf. section \ref{sect_Statement}):

\begin{theorem}{(Theorems 6.9 and 7.12 of \cite{BGK12})}\label{thm_BGK12}
Let $K$ be a \ff\ and $A/K$ be an absolutely simple abelian variety of type I or II. Suppose that $h=\operatorname{rel dim}(A)$ is odd: then for every prime $\ell$ the simple factors of $H_{\ell}(A) \otimes \mC$ are of type $\operatorname{Sp}_{2h}$. Furthermore, the Mumford-Tate conjecture holds for $A$.
\end{theorem}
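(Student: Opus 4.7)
The plan is to determine $\mathfrak{h}_\ell(A) \otimes \mC$ exactly, showing that it is isomorphic to $e_0$ copies of $\mathfrak{sp}_{2h}$, each acting via its standard representation on a specific piece of $V_\ell(A) \otimes \mC$. Once this structure is in place, the Mumford--Tate conjecture will follow by a rank comparison through proposition~\ref{prop_EnoughToConsiderSS}. The key ingredients will be Faltings's theorem (to identify the commutant of $H_\ell(A)$ and decompose $V_\ell(A)$), Pink's theorem (to constrain each simple factor to be classical with minuscule weights), the symplectic structure coming from a polarization, and the parity hypothesis on $h$.

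\textbf{Decomposition of $V_\ell(A)$.} Write $D = \operatorname{End}^0_{\barK}(A)$ and $E_0$ for its center. Since $E_0$ is totally real and $D$ is split by $\mC$, the algebra $D \otimes_\mathbb{Q} \mC$ splits as $M_d(\mC)^{e_0}$, with $d=1$ in type I and $d=2$ in type II. Correspondingly $V_\ell(A) \otimes \mC$ decomposes as $\bigoplus_\tau V_\tau \otimes \operatorname{Std}_d$, where $\tau$ runs over the $e_0$ embeddings of $E_0$ into $\mC$, each multiplicity space $V_\tau$ has dimension $2h$, and $H_\ell(A)$ acts only on the $V_\tau$ part. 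By theorem~\ref{thm_TateConjecture}, $D \otimes \mathbb{Q}_\ell$ is the full commutant of the $H_\ell(A)$-action, so each $V_\tau$ is irreducible as a representation of $\mathfrak{h}_\ell(A) \otimes \mC$. The Weil pairing restricts to a nondegenerate form on each $V_\tau$: in type I this is immediate, while in type II one observes that the Rosati involution is of orthogonal type on the quaternion algebra, whence the $M_2(\mC)$-invariant form on $\operatorname{Std}_2$ is orthogonal, forcing (since the global form on $V_\ell(A)$ is alternating) the induced form on $V_\tau$ to be symplectic.

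\textbf{Identifying the simple factors.} Applying Pink's theorem~\ref{thm_Pink} (and noting that $\mathfrak{h}_\ell(A)$ is semisimple for types I and II), each irreducible $V_\tau$ decomposes as a tensor product $\bigotimes_i W_{i,\tau}$ of minuscule representations $W_{i,\tau}$ of simple classical factors $\mathfrak{h}_i$ of $\mathfrak{h}_\ell(A) \otimes \mC$. Self-duality of the symplectic $V_\tau$ forces each $W_{i,\tau}$ to be self-dual, and from the table of minuscule weights every self-dual minuscule representation of a classical simple Lie algebra has \emph{even} dimension. Since $\dim V_\tau = 2h$ with $h$ odd, at most one tensor factor can be nontrivial; hence $V_\tau$ is a single self-dual minuscule representation of dimension $2h \equiv 2 \pmod 4$ and symplectic type. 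A case-by-case inspection of the table (Kummer's theorem on the $2$-adic valuation of the central binomial coefficient for type $A_l$, and direct dimension counts for $B_l$, $D_l$) shows that the only surviving option is the standard representation of $\mathfrak{sp}_{2h}$. Finally, the commutant of $H_\ell(A)$ equaling $D$ rules out isomorphisms between distinct $V_\tau$'s, so the $e_0$ embeddings $\tau$ give rise to $e_0$ distinct simple factors of $\mathfrak{h}_\ell(A) \otimes \mC$, each of type $C_h$.

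\textbf{Mumford--Tate conjecture.} The previous step gives $\operatorname{rk}(H_\ell(A)) = e_0 h$. On the Hodge side, $H(A)$ is contained in the Lefschetz group $\operatorname{Res}_{E_0/\mathbb{Q}} \operatorname{Sp}_{2h}$ (a classical fact for abelian varieties of types I and II), which has rank $e_0 h$, so $\operatorname{rk}(H(A)) \leq e_0 h = \operatorname{rk}(H_\ell(A))$, and proposition~\ref{prop_EnoughToConsiderSS} yields the Mumford--Tate conjecture for $A$. The main obstacle in the outline is the parity analysis of the third paragraph: one must carefully check that exotic coincidences (e.g.\ the isomorphism $A_3 \cong D_3$ producing a self-dual $6$-dimensional minuscule representation) cannot yield a non-$C_h$ simple factor, and it is precisely here that the odd-$h$ hypothesis together with the symplectic (rather than orthogonal) type of the form on $V_\tau$ is used decisively.
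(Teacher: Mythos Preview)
The paper does not itself prove this statement: it is quoted as Theorems~6.9 and~7.12 of \cite{BGK12}, with no argument supplied beyond the citation (remark~\ref{rmk_TypesIandII} merely notes that the proof in \cite{BGK12} also yields the general Lefschetz property). So there is no in-paper proof to compare your attempt against.

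That said, your sketch is a faithful outline of the argument actually carried out in \cite{BGK12}. The decomposition of $V_\ell(A)\otimes\mC$ via the splitting of $D\otimes\mC$ into $e_0$ matrix blocks, the irreducibility of each $V_\tau$ forced by Faltings's theorem on the commutant, the symplectic form on each $V_\tau$, and the use of Pink's minuscule-weights theorem to write $V_\tau$ as a tensor product of self-dual minuscule modules are all exactly the steps used there. Your parity count---every self-dual minuscule module of a classical simple algebra has even dimension, so $2h$ with $h$ odd forces a single nontrivial tensor factor, and then the symplectic sign together with the table of minuscule weights leaves only the standard representation of $\mathfrak{sp}_{2h}$---is precisely the content of \cite[Lemma~4.13]{BGK12}. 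The final rank comparison against the Lefschetz group is likewise the standard conclusion. In short: your proposal is correct and matches the cited source; the paper itself simply invokes that source.
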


\begin{remark}\label{rmk_TypesIandII}
It is clear from the proof of \cite[Lemma 4.13]{BGK12} that any abelian variety as in theorem \ref{thm_BGK12} is of general Lefschetz type. Moreover, the result also holds for $h=2$: this is not stated explicitly in \cite{BGK12}, but follows essentially from the same proof (cf. also \cite[Theorem 8.5]{MR1156568}, which covers the case of abelian fourfolds of relative dimension 2).
\end{remark}

Another paper by the same authors, \cite{BGK3}, deals with varieties of type III: %, and contains the following statement:

\begin{proposition}\label{prop_TypeIII}
Let $K$ be a \ff\ and $A/K$ be an absolutely simple abelian variety of type III. Suppose that $h=\operatorname{rel dim}(A)$ is odd: then for every $\ell$ the simple factors of $\left(\operatorname{Lie} H_{\ell}(A) \right) \otimes \mC$ are either of type $\mathfrak{so}_{2h}$ or of type $\mathfrak{sl}_{l+1}$, where $l+1$ is a power of 2. Furthermore, $A$ is of general Lefschetz type.
\end{proposition}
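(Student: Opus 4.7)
The plan is to combine Pink's theorem \ref{thm_Pink} on minuscule representations with the orthogonal structure that the polarization, together with the definite quaternion endomorphism algebra, imposes on the multiplicity spaces of $V_\ell(A) \otimes \mC$. First I would use proposition \ref{prop_Specialization} to reduce to the case of $K$ a number field, and then replace $K$ by a finite extension (which does not change $H_\ell(A)$) to assume $G_\ell(A)$ is connected and all endomorphisms of $A_{\barK}$ are defined over $K$. Setting $D = \operatorname{End}^0_K(A)$, a totally definite quaternion algebra over a totally real field $E$ of degree $e$, theorem \ref{thm_TateConjecture} identifies $D \otimes \mathbb{Q}_\ell$ with the commutant of $H_\ell(A)$ in $\operatorname{End}(V_\ell(A))$.

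Extending scalars to $\mC$, I would write $D \otimes \mC \cong \prod_{\sigma:E \to \mC} M_2(\mC)$ and correspondingly $V_\ell(A) \otimes \mC \cong \bigoplus_\sigma U_\sigma \otimes W_\sigma$, where $U_\sigma$ is the $2$-dimensional standard $M_2$-module and each $W_\sigma$ has dimension $\dim A/e = 2h$. For type III the Rosati involution attached to any polarization of $A$ is the canonical involution of $D$, which on each $M_2$-factor becomes $x \mapsto \operatorname{tr}(x)I - x$; compatibility of $\varphi_\ell$ with this involution forces its restriction to each $V_\sigma$ to split as the tensor product of the canonical symplectic form on $U_\sigma$ and a nondegenerate \emph{symmetric} form $q_\sigma$ on $W_\sigma$. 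Since $\mathfrak{h}_\ell(A) \otimes \mC$ acts trivially on each $U_\sigma$ (it commutes with $D$), it embeds in $\prod_\sigma \mathfrak{so}(W_\sigma, q_\sigma) \cong \mathfrak{so}_{2h}^e$.

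Next, decomposing $\mathfrak{h}_\ell(A) \otimes \mC = \bigoplus_{i=1}^n \mathfrak{h}_i$ into simple ideals (no central part, since $A$ is not of type IV), theorem \ref{thm_Pink} gives that each $\mathfrak{h}_i$ is classical and acts on every simple submodule of $V_\ell(A) \otimes \mC$ through a minuscule highest weight. Via the embedding into $\mathfrak{so}_{2h}^e$, every nontrivial irreducible $\mathfrak{h}_i$-constituent of some $W_\sigma$ must be self-dual and orthogonal of dimension dividing $2h$. The minuscule table then produces a short case analysis, in which the hypothesis that $h$ is odd, so $v_2(2h)=1$, is decisive:
\begin{itemize}
\item $C_l$ standard: symplectic, excluded;
\item $B_l$ spin (dimension $2^l$) and $D_l$ half-spin (dimension $2^{l-1}$): divisibility by $2h$ with $h$ odd collapses these to low-rank coincidences already covered below;
\item $D_l$ standard: orthogonal of dimension $2l$, forcing $l=h$ and $\mathfrak{h}_i \cong \mathfrak{so}_{2h}$;
\item $A_l$, fundamental weight $\omega_r$: self-duality imposes $r=(l+1)/2$, orthogonality requires $r$ moreover even, and by Kummer's theorem $v_2\binom{l+1}{(l+1)/2}=1$ precisely when $l+1$ is a power of $2$; this matches the divisibility constraint and yields $\mathfrak{h}_i \cong \mathfrak{sl}_{l+1}$ with $l+1$ a power of $2$, acting via $\bigwedge^{(l+1)/2}\operatorname{Std}$.
\end{itemize}

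To verify that $A$ is of general Lefschetz type I would then argue that each simple factor acts on its own summand of $V_\ell(A) \otimes \mC$: in both surviving cases the identified minuscule representation has dimension exactly $2h$ and saturates a multiplicity space $W_\sigma$, and a further dimension count (again exploiting $v_2(2h)=1$) rules out any simple $\mathfrak{h}_\ell(A)$-submodule that would be a genuine tensor product of minuscule representations of distinct $\mathfrak{h}_i$'s. The decomposition $V_\ell(A) \otimes \mC = \bigoplus_i W_i$ required by definition \ref{def_GeneralLefschetz} follows, with each $W_i$ a direct sum of copies of the identified standard or exterior-power representation. The hard part will be the parity analysis of the $A_l$ case, where self-duality, the orthogonality sign, and the $2$-adic valuation of the central binomial coefficient all have to be reconciled with the divisibility condition imposed by $h$ being odd; the other minuscule families are excluded more directly.
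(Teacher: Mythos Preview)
Your approach is exactly the (corrected) argument of \cite{BGK3} to which the paper defers; the paper itself gives no proof beyond the remark following the proposition, which explains why the $\mathfrak{sl}_{l+1}$ case with $l+1$ a power of $2$ must be added to the conclusion of \cite[Lemma 4.13]{BGK3}. The strategy---the orthogonal structure on the multiplicity spaces $W_\sigma$ coming from the canonical involution on the quaternion algebra, combined with Pink's minuscule theorem and a $2$-adic dimension count---is the right one.

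Two points should be tightened. First, you need (and implicitly use) that each $W_\sigma$ is \emph{irreducible} over $\mathfrak{h}_\ell(A)\otimes\mC$: this follows from theorem \ref{thm_TateConjecture}, since the commutant on $V_\ell(A)\otimes\mC$ is exactly $\prod_\sigma M_2(\mC)$, forcing $\operatorname{End}_{\mathfrak{h}_\ell(A)\otimes\mC}(W_\sigma)=\mC$. Without irreducibility, the claim that an $\mathfrak{h}_i$-constituent has dimension \emph{dividing} $2h$ (rather than merely at most $2h$) is unjustified. Second, your case analysis presupposes that each tensor factor $Z_i$ is itself orthogonal (this is how you exclude $C_l$), but a priori one only knows that each $Z_i$ is self-dual and that the product of the signs is $+1$. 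The clean order is: from irreducibility write $W_\sigma=\bigotimes_i Z_i$ with each $Z_i$ self-dual minuscule; observe that every nontrivial self-dual minuscule module has even dimension; since $v_2(2h)=1$, conclude that exactly one $Z_i$ is nontrivial, and that this single factor is therefore orthogonal of dimension exactly $2h$. Only then does your table-lookup go through verbatim. With this reordering the argument is complete, and the Kummer computation correctly isolates $l+1$ a power of $2$.
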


\begin{remark} Note that the authors of \cite{BGK3} claim a stronger statement, namely the fact that the simple factors of $H_{\ell}(A) \otimes \mC$ can only be of type $\operatorname{SO}_{2h}$ and that, under the same hypotheses, Mumford-Tate holds for $A$. The proof of \cite[Lemma 4.13]{BGK3}, however, fails to take into account the minuscule orthogonal representations whose dimension is congruent to 2 modulo 4 (those corresponding to algebras of type $\operatorname{sl}_{l+1}$ acting on $\Lambda^{\frac{l+1}{2}} \operatorname{Std}$, when $l\geq 3$ and $l+1$ is a power of 2); as a result, the statements of \cite[Theorems 4.19 and 5.11]{BGK3} need to be amended as we did in proposition \ref{prop_TypeIII}.
\end{remark}

\section{Preliminary lemmas}\label{subsect_Lemmas}
We now start proving some lemmas on algebraic groups and Lie algebras we will repeatedly need throughout the paper. 

\begin{lemma}\label{lemma_InclusionReductive}
Let $G \hookrightarrow G_1 \times G_2$ be an inclusion of algebraic groups over a field of characteristic zero. Suppose that $G,G_1$ and $G_2$ are reductive and connected, and that the projections of $G$ on $G_1$ and $G_2$ are surjective. If $\operatorname{rk} G$ equals $\operatorname{rk}(G_1)+\operatorname{rk}(G_2)$, then the inclusion is an isomorphism.
\end{lemma}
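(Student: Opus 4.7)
The plan is to pass to Lie algebras and to reduce the problem to showing $\mathfrak{g} := \operatorname{Lie}(G) = \mathfrak{g}_1 \oplus \mathfrak{g}_2$; since $G$ is connected and closed in $G_1 \times G_2$, with $G_1 \times G_2$ also connected, equality of Lie algebras will force $G = G_1 \times G_2$. The differential of $p_i : G \to G_i$ is a surjection $\mathfrak{g} \twoheadrightarrow \mathfrak{g}_i$ (since $p_i$ is smooth and surjective in characteristic zero), and the inclusion $\mathfrak{g} \hookrightarrow \mathfrak{g}_1 \oplus \mathfrak{g}_2$ remains a closed immersion.

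First I would locate a common maximal torus. Let $T$ be a maximal torus of $G$. Because $p_i$ is a surjective homomorphism of connected reductive groups, $T_i := p_i(T)$ is a maximal torus of $G_i$. The injection $T \hookrightarrow T_1 \times T_2$ has source of dimension $\operatorname{rk}(G) = \operatorname{rk}(G_1) + \operatorname{rk}(G_2) = \dim(T_1 \times T_2)$ by hypothesis; an injective morphism of tori of the same dimension in characteristic zero is an isomorphism, so $T \cong T_1 \times T_2$. In particular $T$ is a maximal torus of $G_1 \times G_2$.

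Next I would exploit the root space decomposition. Since $\mathfrak{g} \subseteq \mathfrak{g}_1 \oplus \mathfrak{g}_2$ is stable under the adjoint action of $T$, it decomposes as a sum of $T$-weight spaces; the nonzero weights of $\mathfrak{g}_1 \oplus \mathfrak{g}_2$ are the roots of $G_1$ with respect to $T_1$ (extended by zero on $T_2$) together with the roots of $G_2$ with respect to $T_2$ (extended by zero on $T_1$), and each such weight space is one-dimensional. Fix a root $\alpha$ of $G_1$; by surjectivity of $p_1 : \mathfrak{g} \to \mathfrak{g}_1$, there exists $x \in \mathfrak{g}$ with $p_1(x)$ a nonzero element of $(\mathfrak{g}_1)_\alpha$. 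Decomposing $x = \sum_\mu x_\mu$ into $T$-weight components, only the summand of weight $(\alpha, 0)$ can contribute to $p_1(x)$; that summand is therefore nonzero, and since $(\mathfrak{g}_1)_\alpha \oplus 0$ is one-dimensional it must coincide with the line spanned by $x_{(\alpha,0)}$, which lies in $\mathfrak{g}$. The same argument on the other factor shows every root space of $\mathfrak{g}_1 \oplus \mathfrak{g}_2$ is contained in $\mathfrak{g}$. Together with $\mathfrak{t} = \mathfrak{t}_1 \oplus \mathfrak{t}_2 \subseteq \mathfrak{g}$, this forces $\mathfrak{g} = \mathfrak{g}_1 \oplus \mathfrak{g}_2$, and connectedness of $G$ then finishes the proof.

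I do not expect any genuine obstacle: all ingredients (image of a maximal torus under a surjective map of connected reductive groups being maximal, surjectivity of the differential of a smooth surjection, one-dimensionality of root spaces in a reductive Lie algebra, rigidity of a connected closed subgroup with full Lie algebra) are standard in characteristic zero. The most delicate bookkeeping is the weight-space extraction argument in the third paragraph; an alternative, essentially equivalent route would use the Lie-algebra version of Goursat's lemma to write $\mathfrak{g} = \mathfrak{n}_1 \oplus \mathfrak{n}_2 \oplus \Gamma$ with $\Gamma$ the graph of an isomorphism $\mathfrak{g}_1/\mathfrak{n}_1 \xrightarrow{\sim} \mathfrak{g}_2/\mathfrak{n}_2$, and then observe that additivity of rank in split short exact sequences of reductive Lie algebras turns the rank hypothesis into $\operatorname{rk}(\mathfrak{g}_1/\mathfrak{n}_1) = 0$, hence $\mathfrak{n}_i = \mathfrak{g}_i$.
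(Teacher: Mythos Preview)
Your proposal is correct and follows essentially the same strategy as the paper: reduce to showing that $\mathfrak{g}=\mathfrak{g}_1\oplus\mathfrak{g}_2$ and then conclude by connectedness (the paper phrases this last step as ``$G$ is open and closed in $G_1\times G_2$''). The only difference is that the paper outsources the Lie-algebra equality to \cite[Lemma 3.1]{HazPowers}, whereas you give a self-contained root-space argument; both are standard and your version is a perfectly acceptable unpacking of that citation.
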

\begin{proof}
We show that $G$ is open and closed in $G_1 \times G_2$. It is closed because every algebraic subgroup is, and it is open since $G$ and $G_1 \times G_2$ have the same Lie algebra by \cite[Lemma 3.1]{HazPowers}.
\end{proof}

\begin{lemma}\label{lemma_CanChooselSoGroupIsSimple}
Let $G$ be a $\mathbb{Q}$-simple algebraic group. If $G$ is semisimple and the number of simple factors of $G_{\overline{\mathbb{Q}}}$ is at most 3, then there is a set of primes $L$ of positive density such that for every $\ell$ in $L$ the group $G_{\mathbb{Q}_\ell}$ is simple.
\end{lemma}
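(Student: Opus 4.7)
The plan is to reduce the statement to a small group-theoretic fact about transitive permutation groups of degree at most $3$, using the Chebotarev density theorem.

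First I would set up notation: since $G$ is semisimple, there is a finite Galois extension $F/\mathbb{Q}$ such that $G_F$ splits as a product $\prod_{i \in X} G_i$ of absolutely simple factors, where $X$ is a finite set. The finite group $H := \operatorname{Gal}(F/\mathbb{Q})$ acts on $X$ by permutations, and this action is transitive because $G$ is $\mathbb{Q}$-simple (any $H$-orbit would define a proper connected normal $\mathbb{Q}$-subgroup). By hypothesis $|X| \leq 3$.

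Next I would interpret $\mathbb{Q}_\ell$-simplicity in these terms. For a prime $\ell$ unramified in $F$, fix an embedding $F \hookrightarrow \overline{\mathbb{Q}_\ell}$; the image of $\operatorname{Gal}(\overline{\mathbb{Q}_\ell}/\mathbb{Q}_\ell)$ in $H$ is the cyclic decomposition group $\langle \operatorname{Frob}_\ell \rangle$, and $G_{\mathbb{Q}_\ell}$ is $\mathbb{Q}_\ell$-simple precisely when this decomposition group acts transitively on $X$, i.e.\ when $\langle \operatorname{Frob}_\ell \rangle$ does. By Chebotarev, each conjugacy class $C \subseteq H$ is hit by $\operatorname{Frob}_\ell$ on a set of primes of density $|C|/|H| > 0$; so it suffices to exhibit a single element $h \in H$ such that $\langle h\rangle$ acts transitively on $X$.

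Finally I would verify the elementary group-theoretic assertion: every transitive subgroup of $S_n$ with $n \in \{1,2,3\}$ contains an $n$-cycle. The case $n=1$ is trivial, the case $n=2$ forces the subgroup to be $S_2$ itself, and for $n=3$ the only transitive subgroups of $S_3$ are $A_3$ and $S_3$, both of which contain a $3$-cycle. Applied to the image of $H$ in $\operatorname{Sym}(X)$, this yields the required $h$. The hypothesis $|X| \leq 3$ is genuinely needed here, since already in $S_4$ the Klein four-group $V_4$ is transitive yet contains no $4$-cycle. There is no serious obstacle in this proof: once the Chebotarev reduction is in place, the conclusion is immediate from the finite-group case analysis.
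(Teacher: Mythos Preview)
Your proposal is correct and follows essentially the same argument as the paper: both reduce to the permutation action of the Galois group on the simple factors of $G_{\overline{\mathbb{Q}}}$, observe that a transitive subgroup of $S_n$ with $n\le 3$ contains an $n$-cycle, and invoke Chebotarev to find a positive-density set of primes whose Frobenius is such a cycle. Your version is slightly more explicit (working with a finite splitting extension $F/\mathbb{Q}$ and noting the $V_4\subset S_4$ counterexample), but the ideas are identical.
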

\begin{proof}
Let $n$ be the number of simple factors of $G_{\overline{\mathbb{Q}}}$; if $n=1$ there is nothing to prove, so we can assume $n$ is $2$ or $3$. The permutation action of $\operatorname{Gal}\left(\overline{\mathbb{Q}}/\mathbb{Q} \right)$ on the simple factors of $G_{\overline{\mathbb{Q}}}$ determines a map $\rho: \operatorname{Gal}\left(\overline{\mathbb{Q}}/\mathbb{Q} \right) \to S_n$, and the assumption that $G$ is $\mathbb{Q}$-simple implies that the image of $\rho$ is a transitive subgroup of $S_n$. As $n \leq 3$, we see that the image of $\rho$ contains an $n$-cycle $g$. By the Chebotarev density theorem there exists a set of primes $L$ of positive density such that $\rho \left(\operatorname{Gal}\left( \mC/\mathbb{Q}_\ell \right) \right)$ contains $g$; in particular, for any such $\ell$ the group $\operatorname{Gal}\left( \mC/\mathbb{Q}_\ell \right)$ acts transitively on the simple factors of $G_{\overline{\mathbb{Q}_\ell}}$, so $G_{\mathbb{Q}_\ell}$ is $\mathbb{Q}_\ell$-simple.
\end{proof}

\begin{lemma}\label{lemma_TrueForHodgeImpliesTrueForHl}
Let $K$ be a \ff\ and $A, B$ be $K$-abelian varieties. Suppose $B$ is CM and $H(A \times B) \cong H(A) \times H(B)$. Then we have $H_\ell(A \times B) \cong H_\ell(A) \times H_\ell(B)$ for every prime $\ell$.
\end{lemma}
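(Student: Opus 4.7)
My plan is to exploit two facts: first, that $B$ being CM forces $H(B)$ (hence, by the Mumford-Tate conjecture for CM abelian varieties, $H_\ell(B)$) to be a torus; second, that Theorem \ref{thm_CenterMT} lets us transfer the assumed product decomposition of $H(A\times B)$ to the centers of the $\ell$-adic groups. Since the nontrivial direction is to show $H_\ell(A)\times H_\ell(B)\subseteq H_\ell(A\times B)$, I would try to realize both factors $H_\ell(A)\times\{1\}$ and $\{1\}\times H_\ell(B)$ as subsets of $H_\ell(A\times B)$.

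Concretely: since $B$ is CM, $\MT(B)$ is a torus and hence so is $H(B)$, and by Theorem \ref{thm_CenterMT} we have $H_\ell(B)=H(B)\otimes\mathbb{Q}_\ell$, still a torus, so $Z(H_\ell(B))=H_\ell(B)$. Applying Theorem \ref{thm_CenterMT} to $A\times B$ and using the hypothesis $H(A\times B)\cong H(A)\times H(B)$, I would compute
\[
Z(H_\ell(A\times B)) \;=\; Z(H(A\times B))\otimes\mathbb{Q}_\ell \;=\; \bigl(Z(H(A))\times H(B)\bigr)\otimes\mathbb{Q}_\ell \;=\; Z(H_\ell(A))\times H_\ell(B),
\]
where I also used Theorem \ref{thm_CenterMT} for $A$. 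In particular $\{1\}\times H_\ell(B)$ sits in the center of $H_\ell(A\times B)$ and hence inside $H_\ell(A\times B)$ itself.

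From here the rest is a one-line manipulation using Proposition \ref{prop_ProductsEll}. The projection $H_\ell(A\times B)\twoheadrightarrow H_\ell(A)$ is surjective, so every $a\in H_\ell(A)$ lifts to some $(a,b)\in H_\ell(A\times B)$; multiplying by $(1,b^{-1})\in\{1\}\times H_\ell(B)\subseteq H_\ell(A\times B)$ yields $(a,1)\in H_\ell(A\times B)$, so $H_\ell(A)\times\{1\}\subseteq H_\ell(A\times B)$. Combined with $\{1\}\times H_\ell(B)\subseteq H_\ell(A\times B)$ this gives $H_\ell(A)\times H_\ell(B)\subseteq H_\ell(A\times B)$, and the reverse inclusion is exactly Proposition \ref{prop_ProductsEll}.

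There is no real obstacle; the only subtlety is making sure that the center computation is carried out with the ``connected component of the center'' convention used in the paper, but both $Z(H(A\times B))$ and $Z(H(A))\times H(B)$ are connected tori, so base-change to $\mathbb{Q}_\ell$ is harmless and the two sides really do coincide on the nose.
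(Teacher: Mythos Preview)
Your argument is correct, and it takes a genuinely different route from the paper's proof. The paper argues purely with ranks: it uses theorem \ref{thm_CenterMT} only to match up $\operatorname{rk} Z(H_\ell(A\times B))$ with $\operatorname{rk} Z(H_\ell(A))+\operatorname{rk} Z(H_\ell(B))$, then observes that since $H_\ell(B)$ is a torus the projection $H_\ell(A\times B)\to H_\ell(A)$ induces an isogeny on derived subgroups, whence equality of semisimple ranks; the conclusion then follows from lemma \ref{lemma_InclusionReductive}. You instead invoke theorem \ref{thm_CenterMT} in its stronger form (equality of subgroups of $\GL(V_\ell)$ under the comparison isomorphism, not just abstract isomorphism of tori) to place $\{1\}\times H_\ell(B)$ literally inside $H_\ell(A\times B)$, and then use a coset-correction trick to pull $H_\ell(A)\times\{1\}$ in as well. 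This is more elementary in that it bypasses lemma \ref{lemma_InclusionReductive} entirely, at the cost of relying on the fact that Vasiu's theorem really gives an equality of subgroups (which it does, cf.\ the Ullmo--Yafaev reference). Your closing remark about connectedness is slightly off target: the genuine subtlety is not the ``connected component'' convention but rather that the $\cong$ in theorem \ref{thm_CenterMT} is to be read as an identification inside $\GL(V_\ell)$, which is indeed how the cited results are stated.
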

\begin{proof}
Using the hypothesis and applying theorem \ref{thm_CenterMT} twice we find 

\[
\begin{aligned}
\operatorname{rk} Z\left(H_\ell \left( A \times B \right)\right) & = \operatorname{rk} Z\left(H \left( A \times B \right)\right) \\
& = \operatorname{rk} Z\left(H \left( A \right)\right)+ \operatorname{rk} Z\left(H\left( B \right)\right) \\
& =\operatorname{rk} Z\left(H_\ell \left( A \right)\right)+ \operatorname{rk} Z\left(H_\ell\left( B \right)\right).
\end{aligned}
\]

Furthermore, as $H_\ell(B)$ is a torus, the canonical projection $H_\ell(A \times B) \to H_\ell(A)$ induces an isogeny $H_\ell(A \times B)^{\operatorname{der}} \cong H_\ell(A)^{\operatorname{der}}$, hence $\operatorname{rk} H_\ell(A \times B)^{\operatorname{der}} =\operatorname{rk} H_\ell(A)^{\operatorname{der}}$. Putting these facts together we get $\operatorname{rk} H_\ell(A \times B) = \operatorname{rk} H_\ell(A) + \operatorname{rk} H_\ell(B)$, so the inclusion $H_\ell(A \times B) \hookrightarrow H_\ell(A) \times H_\ell(B)$ is an isomorphism by lemma \ref{lemma_InclusionReductive}.\end{proof}

The next lemma is certainly well-known to experts (a somewhat similar statement is for example \cite[Théorème 7]{MR0387283}, which deals with the case of elliptic curves), but for lack of an accessible reference we include a short proof:

\begin{lemma}\label{lem_SemisimpleTimesCM} Let $K$ be a \ff\ and $A, B$ be $K$-abelian varieties. Suppose $B$ is of CM type and $A_{\overline{K}}$ has no simple factor of type IV. Then we have $H(A \times B) \cong H(A) \times H(B)$, and for every prime $\ell$ we also have $H_\ell(A \times B) \cong H_\ell(A) \times H_\ell(B)$.
\end{lemma}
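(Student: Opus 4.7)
The plan is to first establish the Hodge-group identity $H(A \times B) \cong H(A) \times H(B)$, and then invoke lemma \ref{lemma_TrueForHodgeImpliesTrueForHl} to transfer it to the $\ell$-adic setting. Since $B$ is assumed CM, the hypotheses of that lemma will be satisfied automatically once the Hodge-theoretic statement is in hand, so essentially all the work is concentrated in the Hodge case.

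For the Hodge part, proposition \ref{prop_Products} gives an inclusion $H(A \times B) \hookrightarrow H(A) \times H(B)$ whose projections to the two factors are surjective. I would pass to Lie algebras and write $\mathfrak{h}$, $\mathfrak{h}_A$, $\mathfrak{h}_B$ for the Lie algebras of $H(A \times B)$, $H(A)$, $H(B)$ respectively. Because $B$ is of CM type, $H(B)$ is a torus, so $\mathfrak{h}_B$ is abelian; because $A_{\overline{K}}$ has no simple factor of type IV, the general structural fact recalled in section 2.2 (following \cite{Moonen_noteson}) tells us $\mathfrak{h}_A$ is semisimple. Goursat's lemma, applied to the subalgebra $\mathfrak{h} \subseteq \mathfrak{h}_A \oplus \mathfrak{h}_B$ that surjects onto each summand, exhibits $\mathfrak{h}$ as the fibre product of a pair of surjections $\mathfrak{h}_A \twoheadrightarrow \mathfrak{q}$ and $\mathfrak{h}_B \twoheadrightarrow \mathfrak{q}$ to a common Lie-algebra quotient $\mathfrak{q}$. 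But $\mathfrak{q}$ is simultaneously a quotient of a semisimple algebra (hence semisimple) and of an abelian one (hence abelian), so $\mathfrak{q}=0$, and therefore $\mathfrak{h} = \mathfrak{h}_A \oplus \mathfrak{h}_B$. As $H(A \times B)$, $H(A)$, and $H(B)$ are all connected, this forces $H(A \times B) = H(A) \times H(B)$ at the level of algebraic groups.

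With the Hodge-theoretic decomposition established, lemma \ref{lemma_TrueForHodgeImpliesTrueForHl} applies directly and yields $H_\ell(A \times B) \cong H_\ell(A) \times H_\ell(B)$ for every prime $\ell$. I do not expect a genuine obstacle here: the argument is almost entirely formal, and the only non-trivial input is buried inside lemma \ref{lemma_TrueForHodgeImpliesTrueForHl}, which relies on the CM case of Mumford-Tate (theorem \ref{thm_CenterMT}) to match the ranks of the central tori on the two sides.
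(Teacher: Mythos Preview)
Your argument is correct, but it diverges from the paper's in one notable way. For the Hodge part, your Goursat argument and the paper's rank computation are essentially the same idea in different clothing: both exploit that $\mathfrak{h}_A$ is semisimple while $\mathfrak{h}_B$ is abelian, and both conclude by passing from a Lie-algebra equality back to the connected groups (the paper does this via lemma~\ref{lemma_InclusionReductive}). The real difference is in the $\ell$-adic step. You route it through lemma~\ref{lemma_TrueForHodgeImpliesTrueForHl}, which drags in theorem~\ref{thm_CenterMT} (the CM case of Mumford--Tate) to match the central ranks. The paper instead observes that the \emph{same} direct argument works verbatim for $H_\ell$: since $B$ is CM the group $H_\ell(B)$ is a torus, and since $A_{\overline{K}}$ has no type IV factor $H_\ell(A)$ is semisimple (as recalled in section~\ref{sect_Statement}), so your own Goursat argument---or the paper's rank computation---applies unchanged to $\mathfrak{h}_\ell(A\times B)\subseteq \mathfrak{h}_\ell(A)\oplus \mathfrak{h}_\ell(B)$. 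The upshot is that the paper's route is strictly more elementary: it never needs Vasiu's theorem, whereas your proof does. Your detour is harmless here, but it is worth noticing that you could have avoided it simply by repeating your Hodge argument with an $\ell$ subscript.
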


\begin{proof}
The same proof works for both the Hodge group and the group $H_\ell$, so let us only treat the former. The canonical projections $H(A \times B) \to H(A)$ and $H(A \times B) \to H(B)$ induce isogenies $H(A \times B)^{\operatorname{der}} \cong H(A)^{\operatorname{der}}$ and $Z(H(A \times B)) \cong Z(H(B))$, so we have
\[
\operatorname{rk} H(A \times B)=\operatorname{rk} H(A \times B)^{\operatorname{der}}+\operatorname{rk} Z(H(A \times B)) = \operatorname{rk} H(A)^{\operatorname{der}}+\operatorname{rk} Z(H(B)) = \operatorname{rk} H(A)+\operatorname{rk} H(B)
\]
and we conclude by lemma \ref{lemma_InclusionReductive}.
\end{proof}

\begin{lemma}\label{lemma_CMImpliesMT}
Let $K$ be a \ff\ and $A, B$ be $K$-abelian varieties. Suppose that Mumford-Tate holds for $A$, and that $B$ is CM. Then Mumford-Tate holds for $A \times B$.
\end{lemma}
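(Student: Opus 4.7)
The plan is to prove the statement via the rank criterion of Proposition \ref{prop_EnoughToConsiderSS}. Fix a prime $\ell$; it suffices to show that $\operatorname{rk} H(A\times B) \leq \operatorname{rk} H_\ell(A\times B)$. Since both groups are reductive, I will split each rank as the sum of the rank of the derived subgroup and the rank of the central torus, and handle the two pieces separately.

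For the central part, I apply Vasiu's theorem (Theorem \ref{thm_CenterMT}) to the abelian variety $A\times B$, which immediately gives $\operatorname{rk} Z(H_\ell(A\times B)) = \operatorname{rk} Z(H(A\times B))$. For the derived part, I exploit the fact that $B$ being CM forces both $H(B)$ and $H_\ell(B)$ to be tori. By Propositions \ref{prop_Products} and \ref{prop_ProductsEll} the natural projection $H_?(A\times B) \twoheadrightarrow H_?(A)$ (for $?$ equal to nothing or $\ell$) is surjective, and since its kernel is a subgroup of the toric factor $H_?(B)$, the induced map on derived subgroups is an isogeny
\[
H_?(A\times B)^{\operatorname{der}} \twoheadrightarrow H_?(A)^{\operatorname{der}}.
\]
Hence $\operatorname{rk} H_\ell(A\times B)^{\operatorname{der}} = \operatorname{rk} H_\ell(A)^{\operatorname{der}}$ and $\operatorname{rk} H(A\times B)^{\operatorname{der}} = \operatorname{rk} H(A)^{\operatorname{der}}$. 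Now the assumption that the Mumford-Tate conjecture holds for $A$ gives $H_\ell(A) \cong H(A)\otimes \mathbb{Q}_\ell$, and in particular $\operatorname{rk} H_\ell(A)^{\operatorname{der}} = \operatorname{rk} H(A)^{\operatorname{der}}$.

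Combining these three equalities yields
\[
\operatorname{rk} H_\ell(A\times B) = \operatorname{rk} H_\ell(A\times B)^{\operatorname{der}} + \operatorname{rk} Z(H_\ell(A\times B)) = \operatorname{rk} H(A)^{\operatorname{der}} + \operatorname{rk} Z(H(A\times B)) = \operatorname{rk} H(A\times B),
\]
and Proposition \ref{prop_EnoughToConsiderSS} concludes the proof for every $\ell$. There is no real obstacle here: the only mildly delicate point is verifying that the projection $H_?(A\times B)^{\operatorname{der}} \to H_?(A)^{\operatorname{der}}$ is actually an isogeny, which follows because its kernel is contained in $H_?(B) \cap H_?(A\times B)^{\operatorname{der}}$, an intersection of a torus with a semisimple group, hence finite. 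Everything else is a bookkeeping exercise on ranks combined with the two external inputs of Vasiu's theorem and the assumed Mumford-Tate conjecture for $A$.
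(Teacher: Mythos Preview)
Your proof is correct and follows essentially the same route as the paper. The only difference is cosmetic: the paper works directly with the derived subgroups, establishing $\operatorname{rk} H_\ell(A\times B)^{\operatorname{der}} = \operatorname{rk} H(A\times B)^{\operatorname{der}}$ via the same isogeny argument you give, and then invokes the first (derived-rank) form of Proposition~\ref{prop_EnoughToConsiderSS}; you instead handle the center separately via Theorem~\ref{thm_CenterMT} and use the second (total-rank) form. Since Theorem~\ref{thm_CenterMT} is already one of the inputs to Proposition~\ref{prop_EnoughToConsiderSS}, your extra step is redundant but certainly not wrong.
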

\begin{proof}
Let $\ell$ be a prime number. As in the previous lemma we have $\operatorname{rk} H_\ell(A \times B)^{\operatorname{der}} = \operatorname{rk} H_\ell(A)^{\operatorname{der}}$ and $\operatorname{rk} H(A \times B)^{\operatorname{der}} = \operatorname{rk} H(A)^{\operatorname{der}}$. Since the Mumford-Tate conjecture holds for $A$, we deduce $\operatorname{rk} H_\ell(A \times B)^{\operatorname{der}} = \operatorname{rk} H_\ell(A)^{\operatorname{der}} = \operatorname{rk} H(A)^{\operatorname{der}} = \operatorname{rk} H(A \times B)^{\operatorname{der}}$, 
and the lemma follows from proposition \ref{prop_EnoughToConsiderSS}.
\end{proof}

\begin{lemma}\label{lemma_ProductImpliesMT}
Let $K$ be a \ff\ and $A_1,\ldots,A_n$ be $K$-abelian varieties. Suppose that Mumford-Tate holds for every $A_i$, and that the equality $H_\ell\left( \prod_{i=1}^n A_i \right) = \prod_{i=1}^n H_\ell(A_i)$ holds for a given prime $\ell$. Then the Mumford-Tate conjecture holds for $\prod_{i=1}^n A_i$.
\end{lemma}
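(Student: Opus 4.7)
The plan is to reduce the problem to a rank comparison and then invoke proposition \ref{prop_EnoughToConsiderSS}. Concretely, I want to show that $\operatorname{rk} H(\prod_i A_i) \leq \operatorname{rk} H_\ell(\prod_i A_i)$, which by proposition \ref{prop_EnoughToConsiderSS} is sufficient to conclude that the Mumford-Tate conjecture holds for $\prod_i A_i$.

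First I would use the hypothesis $H_\ell\bigl(\prod_i A_i\bigr) = \prod_i H_\ell(A_i)$ to compute
\[
\operatorname{rk} H_\ell\Bigl( \prod_{i=1}^n A_i \Bigr) = \sum_{i=1}^n \operatorname{rk} H_\ell(A_i).
\]
Then, since the Mumford-Tate conjecture is assumed to hold for each $A_i$, we have $H_\ell(A_i) \cong H(A_i) \otimes \mathbb{Q}_\ell$ and thus $\operatorname{rk} H_\ell(A_i) = \operatorname{rk} H(A_i)$ for every $i$. Combining these gives
\[
\operatorname{rk} H_\ell\Bigl( \prod_{i=1}^n A_i \Bigr) = \sum_{i=1}^n \operatorname{rk} H(A_i).
\]

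Next, I would use the Hodge-theoretical analogue of the product estimate, namely proposition \ref{prop_Products}, which provides the inclusion $H\bigl(\prod_i A_i\bigr) \hookrightarrow \prod_i H(A_i)$. This inclusion immediately yields
\[
\operatorname{rk} H\Bigl( \prod_{i=1}^n A_i \Bigr) \leq \sum_{i=1}^n \operatorname{rk} H(A_i) = \operatorname{rk} H_\ell\Bigl( \prod_{i=1}^n A_i \Bigr).
\]
Applying proposition \ref{prop_EnoughToConsiderSS} to $A = \prod_i A_i$ then concludes the proof.

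There is essentially no obstacle here: the argument is a direct combination of the product estimate on the Hodge side, the assumed product decomposition on the $\ell$-adic side, and the known equivalence between the rank inequality and the full Mumford-Tate conjecture (via theorems \ref{thm_Pink1} and \ref{thm_CenterMT}, packaged into proposition \ref{prop_EnoughToConsiderSS}). The only subtlety worth flagging is that proposition \ref{prop_EnoughToConsiderSS} requires $K$ to be a finitely generated subfield of $\mathbb{C}$ so that the Hodge group is defined, which matches the hypothesis of the lemma.
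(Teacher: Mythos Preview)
Your proof is correct and follows exactly the same approach as the paper's own proof: compute $\operatorname{rk} H_\ell\bigl(\prod_i A_i\bigr) = \sum_i \operatorname{rk} H_\ell(A_i) = \sum_i \operatorname{rk} H(A_i) \geq \operatorname{rk} H\bigl(\prod_i A_i\bigr)$ and then apply proposition~\ref{prop_EnoughToConsiderSS}. The paper's version is simply more terse, leaving implicit the reference to proposition~\ref{prop_Products} for the final inequality.
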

\begin{proof}
The hypothesis implies
\[
\operatorname{rk} H_\ell \left(\prod_{i=1}^n A_i \right) = \sum_{i=1}^n \operatorname{rk} H_\ell(A_i) = \sum_{i=1}^n \operatorname{rk} H(A_i) \geq \operatorname{rk} H\left( \prod_{i=1}^n A_i \right),
\]
and the lemma follows from proposition \ref{prop_EnoughToConsiderSS}.
\end{proof}

\medskip

One of the most important ingredients in our proofs is the following lemma, part of which is originally due to Ribet. The statement we give here is close in spirit to \cite[Lemma 2.14]{Moonen95hodgeand}, but our version is even more general.% Even though part of this lemma is by now classical, we include a complete proof for the convenience of the reader.

\begin{lemma}\label{lemma_RibetLemma}
Let $\mathbf{C}$ be an algebraically closed field of characteristic zero and $V_1, \ldots, V_n$ be finite-dimensional $\mathbf{C}$-vector spaces. Let $\mathfrak{gl}(V_i)$ be the Lie algebra of endomorphisms of $V_i$ and let $\mathfrak{g}$ be a Lie subalgebra of $\mathfrak{gl}(V_1) \oplus \cdots \oplus \mathfrak{gl}(V_n)$. For each $i=1, \cdots, n$ let $\pi_i: \bigoplus_{j=1}^n \mathfrak{gl}(V_j) \to \mathfrak{gl}(V_i)$ be the $i$-th projection and let $\mathfrak{g}_i=\pi_i(\mathfrak{g})$. Suppose that each $\mathfrak{g}_i$ is a simple Lie algebra and that one of the following conditions holds:

\begin{itemize}
\item[(a)]
For every pair of distinct indices $i,j$ the projection $\pi_i \oplus \pi_j : \mathfrak{g} \to \mathfrak{g}_i \oplus \mathfrak{g}_j$ is onto.

\item[(b)]
%For every (abstract) simple Lie algebra $\mathfrak{l}$ let $I(\mathfrak{l})= \left\{ i \in \left\{1, \cdots,n \right\}  \bigm\vert \mathfrak{l} \cong \mathfrak{g}_i \right\}$. For every $\mathfrak{l}$ such that $|I(\mathfrak{l})|>1$ the following conditions are met:
For all indices $i \neq j$ for which there is an isomorphism $\varphi : \mathfrak{g}_i \to \mathfrak{g}_j$ we have the following:

\begin{enumerate}
\item there is an irreducible $\mathfrak{g}_i$-representation $W$ such that all simple $\mathfrak{g}_i$-submodules of $V_i$ and of $\varphi^*\left( V_{j} \right)$ are isomorphic to $W$, and the highest weight defining $W$ is stable under all automorphisms of $\mathfrak{g}_i$;

\item let $I=\left\{ k \in \left\{1,\ldots,n \right\} \bigm\vert \mathfrak{g}_k \cong \mathfrak{g}_i \right\}$; the equality $\operatorname{End}_\mathfrak{g}\left( \bigoplus_{k \in I} V_k \right) \cong \prod_{k \in I} \operatorname{End}_{\mathfrak{g}_k} V_k$ holds.
\end{enumerate}

\end{itemize}

Then $\displaystyle \mathfrak{g}=\bigoplus_{j=1}^n \mathfrak{g}_j$.
\end{lemma}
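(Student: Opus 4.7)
The plan is to reduce both cases to the single statement that every pairwise projection $\pi_i \oplus \pi_j : \mathfrak{g} \to \mathfrak{g}_i \oplus \mathfrak{g}_j$ (for $i \neq j$) is surjective, and then to conclude by a Goursat-style induction on $n$. Case (a) \emph{is} this hypothesis, so the first task is to deduce pairwise surjection from (b); afterwards a single inductive argument finishes the proof for both cases simultaneously.

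For the reduction in case (b), I would fix $i \neq j$ and consider $\mathfrak{g}_{ij} := (\pi_i \oplus \pi_j)(\mathfrak{g}) \subseteq \mathfrak{g}_i \oplus \mathfrak{g}_j$. The projections of $\mathfrak{g}_{ij}$ to each factor are surjective by construction, and the kernel of $\mathfrak{g}_{ij} \to \mathfrak{g}_j$ is an ideal of $\mathfrak{g}_{ij}$ whose image in $\mathfrak{g}_i$ is an ideal of the simple algebra $\mathfrak{g}_i$. The classical Goursat-type dichotomy then gives either $\mathfrak{g}_{ij} = \mathfrak{g}_i \oplus \mathfrak{g}_j$ (what we want) or $\mathfrak{g}_{ij}$ is the graph of an isomorphism $\varphi : \mathfrak{g}_i \to \mathfrak{g}_j$. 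In the graph case $j$ lies in the equivalence class $I$ of $i$, and the action of $\mathfrak{g}$ on $V_i \oplus V_j$ factors through a single copy of $\mathfrak{g}_i$ acting on $V_i \oplus \varphi^*(V_j)$. By Weyl complete reducibility both $V_i$ and $\varphi^*(V_j)$ decompose as sums of simple $\mathfrak{g}_i$-modules, and by (b1) all these simples are isomorphic to a single irreducible $W$; hence $\operatorname{Hom}_{\mathfrak{g}_i}(V_i,\varphi^*(V_j)) \neq 0$, which via the graph identification produces a nonzero element of $\operatorname{Hom}_\mathfrak{g}(V_i,V_j)$. But (b2) forces all off-diagonal $\operatorname{Hom}$-spaces between the $V_k$'s with $k \in I$ to vanish, contradiction.

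With pairwise surjection in hand, I would induct on $n$. The cases $n=1,2$ are immediate. For $n\geq 3$, the image $\mathfrak{g}'$ of $\mathfrak{g}$ under the projection dropping the last coordinate inherits the pairwise hypothesis, so by the inductive hypothesis $\mathfrak{g}' = \bigoplus_{k<n}\mathfrak{g}_k$. The kernel $K := \ker(\pi_n|_\mathfrak{g})$ sits inside $\bigoplus_{k<n}\mathfrak{g}_k$ and, because $\mathfrak{g}$ surjects onto that sum, is actually an ideal of it, hence equals $\bigoplus_{k \in S}\mathfrak{g}_k$ for some $S \subseteq \{1,\ldots,n-1\}$. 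If $S = \{1,\ldots,n-1\}$, combining with the surjection $\mathfrak{g} \twoheadrightarrow \mathfrak{g}_n$ gives $\mathfrak{g} = \bigoplus_{k=1}^n \mathfrak{g}_k$. Otherwise, any $i \notin S$ satisfies $K \subseteq \ker \pi_i$, so $\pi_i$ factors as $\mathfrak{g}/K \cong \mathfrak{g}_n \twoheadrightarrow \mathfrak{g}_i$; simplicity makes this an isomorphism, forcing the image of $\mathfrak{g}$ in $\mathfrak{g}_i \oplus \mathfrak{g}_n$ to lie on its graph and contradicting pairwise surjection on the pair $(i,n)$.

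I expect the main difficulty to be the case (b) reduction: one must use the ``automorphism-stable highest weight'' clause in (b1) to ensure that the irreducible $W$ is really well-defined regardless of which isomorphism $\varphi$ the Goursat dichotomy hands us, so that the isotypicity of $V_i$ and $\varphi^*(V_j)$ can be combined with (b2) to give the required contradiction. The inductive endgame, by contrast, is formal and uses only the simplicity of the $\mathfrak{g}_k$'s.
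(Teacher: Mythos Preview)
Your proposal is correct and follows essentially the same approach as the paper: the reduction (b) $\Rightarrow$ (a) via the Goursat dichotomy, producing in the graph case a nonzero off-diagonal $\mathfrak{g}$-equivariant map $V_i \to V_j$ that contradicts (b2), is exactly what the paper does. The only difference is that for the implication (a) $\Rightarrow$ $\mathfrak{g}=\bigoplus_j \mathfrak{g}_j$ the paper simply cites Ribet's original lemma, whereas you spell out the standard inductive Goursat argument; your version is self-contained, but the content is the same.
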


\begin{remark}
As inner automorphisms preserve every highest weight, in condition (b1) one only needs to check the action of the outer automorphisms (which are finite in number, up to inner automorphisms, since they correspond to automorphisms of the Dynkin diagram). In particular, our conditions (b) generalize those given in \cite[Lemma 2.14]{Moonen95hodgeand}.
\end{remark}

\begin{proof}
The fact that (a) implies the desired equality is classical, cf. the Lemma on pages 790-791 of \cite{MR0457455}. Thus it suffices to show that (b) implies (a). Let us fix a pair $(i,j)$ and consider the projection $\pi_i \oplus \pi_j : \mathfrak{g} \to \mathfrak{g}_i \oplus \mathfrak{g}_j$. Let $\mathfrak{h}$ be the image of this projection and $\mathfrak{k}$ be $\ker\left(\mathfrak{h} \to \mathfrak{g}_i \right)$. Since $\mathfrak{k}$ can be identified to an ideal of $\mathfrak{g}_j$ (which is simple), we either have $\mathfrak{k} \cong \mathfrak{g}_j$, in which case $\mathfrak{h} \cong \mathfrak{g}_i \oplus \mathfrak{g}_j$ as required, or $\mathfrak{k} = \left\{0\right\}$, in which case $\mathfrak{h}$ is the graph of an isomorphism $\mathfrak{g}_i \cong \mathfrak{g}_j$; it is this latter possibility that we need to exclude. If $\mathfrak{g}_i$ and $\mathfrak{g}_j$ are not isomorphic there is nothing to prove, so let us assume $\mathfrak{g}_i \cong \mathfrak{g}_j$, and suppose by contradiction that $\mathfrak{h}$ is the graph of an isomorphism $\varphi: \mathfrak{g}_i \to \mathfrak{g}_j$. Let $\rho_i : \mathfrak{g}_i \to \mathfrak{gl}(V_i)$ and $\rho_j : \mathfrak{g}_j \to \mathfrak{gl}(V_j)$ be the tautological representations of $\mathfrak{g}_i, \mathfrak{g}_j$. By assumption (b1), the simple $\mathfrak{g}_i$-subrepresentations of $\rho_i$ and $\rho_j \circ \varphi$ are isomorphic, so there exists a nonzero morphism of $\mathfrak{g}_i$-representations $\chi_{ij}:V_i \to V_j$. Equivalently, $\chi_{ij}$ is $\mathfrak{h}$-equivariant (recall that $\mathfrak{h}$ is the graph of $\varphi$). Setting $I=\left\{k\in \left\{1,\ldots,n \right\}\bigm\vert\mathfrak{g}_k\cong\mathfrak{g}_i\right\}$, the map
\[
\begin{matrix}
\Psi: & \displaystyle \bigoplus_{k \in I } V_k & \to & \displaystyle \bigoplus_{k \in I} V_k \\
      & (v_{i_1}, \cdots, \underbrace{v_i}_{\mbox{factor } V_i}, \cdots, v_{i_{|I|}}) & \mapsto & ( 0, \cdots, \underbrace{\chi_{ij}(v_i)}_{\mbox{factor } V_j}, \cdots, 0)
      \end{matrix}
\]
then belongs to $\operatorname{End}_{\mathfrak{g}} \left( \bigoplus_{k \in I} V_k\right)$, but does not send every factor to itself, so it is not an element of $\prod_{k \in I} \operatorname{End}_{\mathfrak{g}_k} \left(V_k\right)$. This contradicts condition (b2), so $\mathfrak{g} \to \mathfrak{g}_i \oplus \mathfrak{g}_j$ must be onto, and therefore (b) implies (a) as required.
\end{proof}

\begin{proposition}\label{prop_DifferentSimpleFactors}
Let $K$ be a \ffn, $A, B$ be $K$-abelian varieties and $\ell$ be a prime number. Suppose $H_\ell(A)$ is semisimple and no simple factor of the (semisimple) Lie algebra $\operatorname{Lie}(H_\ell(A)) \otimes \overline{\mathbb{Q}_\ell}$ is isomorphic to a simple factor of $\operatorname{Lie}(H_\ell(B))^{\operatorname{ss}} \otimes \mC$: then $\operatorname{H}_\ell(A \times B) \cong H_\ell(A) \times H_\ell(B)$.
\end{proposition}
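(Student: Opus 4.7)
The plan is to apply lemma \ref{lemma_InclusionReductive} to the inclusion $H_\ell(A\times B)\hookrightarrow H_\ell(A)\times H_\ell(B)$ provided by proposition \ref{prop_ProductsEll}, which already comes with surjective projections on each factor. Everything thus reduces to verifying the rank equality $\operatorname{rk} H_\ell(A\times B)=\operatorname{rk} H_\ell(A)+\operatorname{rk} H_\ell(B)$; I work with the Lie algebras $\mathfrak{h}=\operatorname{Lie}(H_\ell(A\times B))\otimes\mC$, $\mathfrak{a}=\operatorname{Lie}(H_\ell(A))\otimes\mC$ and $\mathfrak{b}=\operatorname{Lie}(H_\ell(B))\otimes\mC$ after extending scalars to $\mC$, and split the computation between the semisimple parts and the central tori.

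For the semisimple parts, observe that the inclusion $\mathfrak{h}\hookrightarrow\mathfrak{a}\oplus\mathfrak{b}$ sends derived subalgebra to derived subalgebra, giving an inclusion $\mathfrak{h}^{\operatorname{ss}}\hookrightarrow\mathfrak{a}\oplus\mathfrak{b}^{\operatorname{ss}}$ that still projects surjectively onto each factor (using the assumption that $\mathfrak{a}$ is itself semisimple). Write $\mathfrak{h}^{\operatorname{ss}}=\bigoplus_k\mathfrak{h}_k$ as a sum of simple ideals. For each $k$, the image $\pi_\mathfrak{a}(\mathfrak{h}_k)$ is simultaneously a quotient of the simple algebra $\mathfrak{h}_k$ and an ideal of $\mathfrak{a}$ (the projection $\mathfrak{h}^{\operatorname{ss}}\twoheadrightarrow\mathfrak{a}$ being surjective), hence either zero or a simple factor of $\mathfrak{a}$ isomorphic to $\mathfrak{h}_k$; the same dichotomy holds for $\pi_{\mathfrak{b}^{\operatorname{ss}}}(\mathfrak{h}_k)$. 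By the hypothesis that no simple factor of $\mathfrak{a}$ is isomorphic to any simple factor of $\mathfrak{b}^{\operatorname{ss}}$, at most one of these two projections can be nonzero, so every $\mathfrak{h}_k$ lies entirely inside $\mathfrak{a}$ or entirely inside $\mathfrak{b}^{\operatorname{ss}}$. Coupled with the surjectivity of the two projections, this forces $\mathfrak{h}^{\operatorname{ss}}=\mathfrak{a}\oplus\mathfrak{b}^{\operatorname{ss}}$, and in particular $\operatorname{rk}\mathfrak{h}^{\operatorname{ss}}=\operatorname{rk}\mathfrak{a}+\operatorname{rk}\mathfrak{b}^{\operatorname{ss}}$.

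For the centers, the assumption that $\mathfrak{a}$ is semisimple (so $Z(\mathfrak{a})=0$) forces $Z(\mathfrak{h})$ to lie inside $\{0\}\oplus\mathfrak{b}$; on the other hand any element of $Z(\mathfrak{h})$ commutes with everything in the image of the surjection $\mathfrak{h}\twoheadrightarrow\mathfrak{b}$, so $Z(\mathfrak{h})\subseteq Z(\mathfrak{b})$. Conversely, since $\mathfrak{h}\twoheadrightarrow\mathfrak{b}$ is a surjection of reductive Lie algebras it sends $Z(\mathfrak{h})$ onto $Z(\mathfrak{b})$, so the containment is an equality and $\dim Z(\mathfrak{h})=\dim Z(\mathfrak{b})$.

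Summing, $\operatorname{rk}\mathfrak{h}=\operatorname{rk}\mathfrak{a}+\operatorname{rk}\mathfrak{b}^{\operatorname{ss}}+\dim Z(\mathfrak{b})=\operatorname{rk}\mathfrak{a}+\operatorname{rk}\mathfrak{b}$ and lemma \ref{lemma_InclusionReductive} finishes the argument. The one genuinely nontrivial step is the observation that a simple ideal of $\mathfrak{h}^{\operatorname{ss}}$ projects into each side as a simple \emph{ideal}, and therefore as a full simple factor up to isomorphism; it is precisely here that the absence of shared simple factors between $\mathfrak{a}$ and $\mathfrak{b}^{\operatorname{ss}}$ intervenes. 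Everything else is a standard manipulation with reductive Lie algebras.
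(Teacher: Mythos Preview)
Your argument is correct and follows the same overall architecture as the paper's proof: split the rank computation into a center piece and a semisimple piece, show each matches, and then conclude via lemma~\ref{lemma_InclusionReductive}. The center argument is essentially identical (the paper phrases it at the group level as an isogeny $Z(H_\ell(A\times B))\cong Z(H_\ell(B))$, but the content is the same).

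The one genuine difference is in the semisimple step. The paper decomposes the \emph{targets} $\mathfrak{h}_A$ and $\mathfrak{h}_B^{\operatorname{ss}}$ into simple factors $\mathfrak{g}_1,\ldots,\mathfrak{g}_{n+m}$, checks that every pairwise projection $\mathfrak{h}\to\mathfrak{g}_i\oplus\mathfrak{g}_j$ is onto (trivial when both indices lie on the same side, and a Goursat argument when $i\le n<j$ using non-isomorphism of $\mathfrak{g}_i$ and $\mathfrak{g}_j$), and then invokes lemma~\ref{lemma_RibetLemma}(a). You instead decompose the \emph{source} $\mathfrak{h}^{\operatorname{ss}}$ into simple ideals and observe directly that each one, being mapped by a surjection to an ideal of either side, must land entirely in $\mathfrak{a}$ or entirely in $\mathfrak{b}^{\operatorname{ss}}$. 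Your route is slightly more self-contained in that it bypasses lemma~\ref{lemma_RibetLemma}; the paper's route has the advantage of reusing a lemma that is central elsewhere in the article. Both are equally valid and of comparable length.
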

\begin{proof}
As the group $H_\ell(A)$ is semisimple, the projection $H_\ell(A \times B) \twoheadrightarrow H_\ell(B)$ induces an isogeny $Z(H_\ell(A \times B)) \cong Z(H_\ell(B))$, so the centers of $H_\ell(A \times B)$ and of $H_\ell(A) \times H_\ell(B)$ have the same rank.
Next consider the semisimple ranks. Let $\mathfrak{h}$, $\mathfrak{h}_A$ and $\mathfrak{h}_B$ be the Lie algebras $\operatorname{Lie}(H_\ell(A \times B))^{\operatorname{ss}} \otimes \overline{\mathbb{Q}_\ell}$, $\operatorname{Lie}(H_\ell(A)) \otimes \overline{\mathbb{Q}_\ell}$ and $\operatorname{Lie}(H_\ell(B))^{\operatorname{ss}} \otimes \overline{\mathbb{Q}_\ell}$ respectively.

Write $\mathfrak{h}_{A} \cong \mathfrak{g}_1 \oplus \cdots \oplus \mathfrak{g}_{n}$ and $\mathfrak{h}_{B}\cong \mathfrak{g}_{n+1} \oplus \cdots \oplus \mathfrak{g}_{n+m}$, with every $\mathfrak{g}_i$ simple. We can consider $\mathfrak{h}$ as a subalgebra of $\bigoplus_{i=1}^n \mathfrak{g}_i \oplus \bigoplus_{j=1}^m \mathfrak{g}_{n+j}$ that projects surjectively onto $\bigoplus_{i=1}^n \mathfrak{g}_i$ and $\bigoplus_{j=1}^m \mathfrak{g}_{n+j}$. In particular, $\mathfrak{h}$ projects surjectively onto each simple factor $\mathfrak{g}_i$. 

Let us show that all the double projections $\mathfrak{h} \to \mathfrak{g}_i \oplus \mathfrak{g}_j$ are onto. If $i,j$ are both at most $n$ (or $i,j$ are both at least $n+1$) this is trivial, so we can assume $i \leq n < j$. But then by assumption $\mathfrak{g}_i$ and $\mathfrak{g}_j$ are nonisomorphic, so by the same argument as in the proof of lemma \ref{lemma_RibetLemma} the projection must be surjective. Lemma \ref{lemma_RibetLemma} now gives $\mathfrak{h} \cong \mathfrak{h}_{A} \oplus \mathfrak{h}_B$, thus implying $\operatorname{rk} \mathfrak{h} = \operatorname{rk} \mathfrak{h}_A + \operatorname{rk} \mathfrak{h}_B$. In terms of groups this leads to
\[
\begin{aligned}
\operatorname{rk} H_\ell(A \times B)  & = \operatorname{rk} H_\ell(A \times B)^{\operatorname{der}} + \operatorname{rk} Z(H_\ell(A \times B)) \\ 
                                        & = \operatorname{rk} H_\ell(A)^{\operatorname{der}} + \operatorname{rk} H_\ell(B)^{\operatorname{der}} + \operatorname{rk} Z(H_\ell(B)) \\
																				& = \operatorname{rk} H_\ell(A) + \operatorname{rk} H_\ell(B),
\end{aligned}
\]
and we conclude by lemma \ref{lemma_InclusionReductive}.
\end{proof}

\medskip

\section{Sufficient conditions for $H_\ell$ to decompose as a product}\label{sect_Products}

\subsection{An $\ell$-adic analogue of a theorem of Hazama}\label{sect_Hazamaladic}

We are now ready to prove the following $\ell$-adic analogue (and mild generalization) of a Hodge-theoretical result of Hazama (\cite[Proposition 1.8]{HazNonsimple}):

\begin{theorem}\label{thm_Hazama}
Let $K$ be a \ffn, $A_1$ and $A_2$ be $K$-abelian varieties, and $\ell$ be a prime number. For $i=1,2$ let $\mathfrak{h}_{i}$ be the Lie algebra of $H_\ell(A_i)$. Suppose that the following hold:
\begin{enumerate}
\item for $i=1,2$, the algebra $\mathfrak{h}_i$ is semisimple, so that we can write $\mathfrak{h}_{i} \otimes \mC \cong \mathfrak{h}_{i,1} \oplus \cdots \oplus \mathfrak{h}_{i,n_i}$, where every $\mathfrak{h}_{i,j}$ is simple;

\item for $i=1,2$, there exists a decomposition $V_\ell(A_i) \otimes \mC \cong V_{i,1} \oplus \cdots \oplus V_{i,n_i}$ such that the action of $\mathfrak{h}_{i} \otimes \mC \cong \mathfrak{h}_{i,1} \oplus \cdots \oplus \mathfrak{h}_{i,n_i}$ on $V_{i,1} \oplus \cdots \oplus V_{i,n_i}$ is componentwise and $\mathfrak{h}_{i,j}$ acts faithfully on $V_{i,j}$;

\item for all distinct pairs $(i,j)$ and $(i',j')$ for which there exists an isomorphism $\varphi: \mathfrak{h}_{i,j} \to \mathfrak{h}_{i',j'}$ there is an irreducible $\mathfrak{h}_{i,j}$-representation $W$ such that all simple $\mathfrak{h}_{i,j}$-submodules of $V_{i,j}$ and of $\varphi^*\left( V_{i',j'} \right)$ are isomorphic to $W$, and the highest weight defining $W$ is stable under all automorphisms of $\mathfrak{h}_{i,j}$.

%\item for every $(i,j)$, there exists a simple $\mathfrak{h}_{i,j}$-module $W_{i,j}$ such that all the simple $\mathfrak{h}_{i,j}$-submodules of $V_{i,j}$ are isomorphic to $W_{i,j}$;
%\item for every $(i,j)$, all the automorphisms of $\mathfrak{h}_{i,j}$ leave fixed the weight defining $W_{i,j}$;
%\item if $\mathfrak{h}_{i,j}$ and $\mathfrak{h}_{i',j'}$ are isomorphic, then so are $W_{i,j}$ and $W_{i',j'}$ (as $\mathfrak{h}_{i,j}$-representations).

\end{enumerate}

Then either $\operatorname{Hom}_{\barK}(A_1,A_2) \neq 0$ or $H_\ell(A_1 \times A_2) \cong H_\ell(A_1) \times H_\ell(A_2)$.
\end{theorem}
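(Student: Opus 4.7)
The plan is to treat the case $\operatorname{Hom}_{\barK}(A_1,A_2) = 0$, in which I must show $H_\ell(A_1 \times A_2) \cong H_\ell(A_1) \times H_\ell(A_2)$ (the complementary case is already part of the conclusion). The main tool will be Lemma \ref{lemma_RibetLemma} applied to $\mathfrak{h} := \operatorname{Lie}(H_\ell(A_1 \times A_2)) \otimes_{\mathbb{Q}_\ell} \mC$, regarded as a subalgebra of $\bigoplus_{i,j} \mathfrak{h}_{i,j}$ acting on $V_\ell(A_1 \times A_2) \otimes \mC = \bigoplus_{i,j} V_{i,j}$. First I would verify that $H_\ell(A_1 \times A_2)$ is semisimple: it is reductive and embeds in $H_\ell(A_1) \times H_\ell(A_2)$ (Proposition \ref{prop_ProductsEll}) with surjective projections, so its connected central torus must map into the finite centers of the semisimple $H_\ell(A_i)$ and is therefore trivial. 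Hence $\mathfrak{h}$ is semisimple, surjects onto each simple factor $\mathfrak{h}_{i,j}$, and (since its action on $V_\ell(A_i) \otimes \mC$ factors through the projection to $\mathfrak{h}_i \otimes \mC$) stabilizes each $V_{i,j}$.

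Next I would verify condition (b) of Lemma \ref{lemma_RibetLemma}. Part (b1) is exactly hypothesis (3). The main obstacle will be (b2): for each isomorphism class $I$ of simple factors among the $\mathfrak{h}_{i,j}$, all cross-homs $\operatorname{Hom}_\mathfrak{h}(V_{i,j}, V_{i',j'})$ with $(i,j) \neq (i',j')$ in $I$ must vanish (the case $|I|=1$ being trivial). If $i = i'$, the $\mathfrak{h}$-action on these pieces factors through $\mathfrak{h}_i$, and the componentwise action (hypothesis (2)), combined with the fact that $\mathfrak{h}_{i,j}$ acts faithfully on $V_{i,j}$ and that by hypothesis (3) all simple $\mathfrak{h}_{i,j}$-constituents of $V_{i,j}$ are isomorphic to a common nontrivial $W$, implies that the simple $\mathfrak{h}_i$-constituents of $V_{i,j}$ and of $V_{i,j'}$ live on disjoint tensor slots and are therefore non-isomorphic; hence the Hom vanishes. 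If $i \neq i'$, a nonzero $\mathfrak{h}$-equivariant map $V_{1,j} \to V_{2,j'}$, composed with the canonical projection $V_\ell(A_1) \otimes \mC \twoheadrightarrow V_{1,j}$ and inclusion $V_{2,j'} \hookrightarrow V_\ell(A_2) \otimes \mC$, produces a nonzero element of $\operatorname{Hom}_\mathfrak{h}(V_\ell(A_1) \otimes \mC, V_\ell(A_2) \otimes \mC)$; flat base change yields a nonzero element of $\operatorname{Hom}_{\operatorname{Lie}(H_\ell(A_1 \times A_2))}(V_\ell(A_1), V_\ell(A_2))$, contradicting Corollary \ref{cor_TateConjecture} together with the assumption $\operatorname{Hom}_{\barK}(A_1,A_2) = 0$.

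Once (b2) is established, Lemma \ref{lemma_RibetLemma} gives $\mathfrak{h} = \bigoplus_{i,j} \mathfrak{h}_{i,j} = \operatorname{Lie}(H_\ell(A_1) \times H_\ell(A_2)) \otimes \mC$, so the two groups have equal rank, and Lemma \ref{lemma_InclusionReductive} promotes the inclusion $H_\ell(A_1 \times A_2) \hookrightarrow H_\ell(A_1) \times H_\ell(A_2)$ to an isomorphism.
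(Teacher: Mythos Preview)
Your proposal is correct and follows essentially the same route as the paper: apply Lemma~\ref{lemma_RibetLemma} to $\mathfrak{h}=\operatorname{Lie}(H_\ell(A_1\times A_2))\otimes\mC$ inside $\bigoplus_{i,j}\mathfrak{h}_{i,j}$, reduce the failure of (b2) to a nonzero element of $\operatorname{Hom}_{\mathfrak{h}}(V_\ell(A_1),V_\ell(A_2))$, and conclude via Corollary~\ref{cor_TateConjecture} and Lemma~\ref{lemma_InclusionReductive}. The only organizational difference is that you argue by contrapositive and spell out the $i=i'$ case of (b2) more carefully than the paper, which simply absorbs it into the observation that the action of $\mathfrak{h}_i\otimes\mC$ on $V_\ell(A_i)\otimes\mC$ is componentwise.
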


%\begin{remark}
%The reader will immediately notice that if $X_1,X_2$ are of prime dimension (or elliptic curves) and not of Type IV then the hypotheses are satisfied by the results of section \ref{sect_Results}.
%\end{remark}

\begin{remark}
Condition 3 is actually independent of the choice of the isomorphism $\varphi$: this follows easily from the fact that the highest weight of $W$ is stable under all automorphisms of $\mathfrak{h}_{i,j}$.
%Thanks to condition 4, condition 5 is independent of the isomorphism $\mathfrak{h}_{i,j} \cong \mathfrak{h}_{i',j'}$ chosen to compare the representations $W_{i,j}$ and $W_{i',j'}$.
\end{remark}

\begin{proof}
Let $\mathfrak{h}$ be the Lie algebra of $H_\ell(A_1 \times A_2)$. We shall try to apply lemma \ref{lemma_RibetLemma} to the inclusion $\mathfrak{h} \otimes {\mC} \hookrightarrow \left( \mathfrak{h}_{1} \oplus \mathfrak{h}_{2}\right) \otimes \mC$, and distinguish cases according to whether hypothesis (b2) is satisfied or not. Observe that $\mathfrak{h} \otimes \mC$ is a subalgebra of 
\[
\left(\mathfrak{h}_1 \oplus \mathfrak{h}_2\right) \otimes \mC \cong \bigoplus_{i=1}^2 \bigoplus_{j=1}^{n_i} \mathfrak{h}_{i,j} \subset \bigoplus_{i=1}^2 \bigoplus_{j=1}^{n_i} \mathfrak{gl}\left(V_{i,j}\right)
\]
whose projection on each factor $\mathfrak{gl}\left(V_{i,j}\right)$ is isomorphic to $\mathfrak{h}_{i,j}$, hence simple. Moreover, hypothesis 3 of this theorem implies condition (b1) of lemma \ref{lemma_RibetLemma}. Suppose now that (b2) holds as well: then $\mathfrak{h}\otimes {\mC} \cong \left(\mathfrak{h}_{1} \oplus \mathfrak{h}_{2} \right) \otimes \mC$, hence in particular $\operatorname{rk} \mathfrak{h} = \operatorname{rk} \mathfrak{h}_1 + \operatorname{rk} \mathfrak{h}_2$, and lemma \ref{lemma_InclusionReductive} implies $H_\ell(A_1 \times A_2) \cong H_\ell(A_1) \times H_\ell(A_2)$. Suppose on the other hand that (b2) fails: then there exists a nontrivial endomorphism $\varphi$ in
\[
\operatorname{End}_{\mathfrak{h} \otimes \mC}\left( \bigoplus_{i=1}^2 \bigoplus_{j=1}^{n_i} V_{i,j} \right) \setminus \bigoplus_{i=1}^2 \bigoplus_{j=1}^{n_i} \operatorname{End}_{\mathfrak{h}_{i,j}}\left( V_{i,j} \right).
\]

Since the action of $\mathfrak{h}_i \otimes \mC$ on $V_\ell(A_i) \otimes \mC \cong \bigoplus_{j=1}^{n_i} V_{i,j}$ is componentwise for $i=1,2$, it is clear that $\varphi$ does not belong to $\operatorname{End}_\mathfrak{h_1}\left( \bigoplus_{j=1}^{n_1} V_{1,j} \right) \times \left\{0\right\}$, nor to $\left\{0\right\} \times \operatorname{End}_\mathfrak{h_2}\left( \bigoplus_{j=1}^{n_2} V_{2,j} \right)$. Thus, up to exchanging the roles of $A_1$ and $A_2$ if necessary, the map $\varphi$ induces an $(\mathfrak{h} \otimes \mC)$-equivariant morphism from $\bigoplus_{j=1}^{n_1} V_{1,j}$ to $\bigoplus_{j=1}^{n_2} V_{2,j}$: this implies that the space
\[
\operatorname{Hom}_{\mathfrak{h}}\left(V_{\ell,1}, V_{\ell,2}\right) \otimes \mC \cong \operatorname{Hom}_{\mathfrak{h} \otimes \mC}\left(V_{\ell,1} \otimes \mC, V_{\ell,2} \otimes \mC \right)
\]
is nontrivial. In particular, $\operatorname{Hom}_{\mathfrak{h}}\left(V_{\ell}(A_1), V_{\ell}(A_2)\right) \neq 0$, and therefore $\operatorname{Hom}_{\barK}(A_1, A_2)$ is nontrivial by corollary \ref{cor_TateConjecture}. \end{proof}

\begin{remark}\label{rmk_Triality}
We now check to what extent the theorem can be applied to varieties $A$ that are of general Lefschetz type with respect to $\ell$ (definition \ref{def_GeneralLefschetz}). It is clear that conditions 1 and 2 are satisfied, so let us discuss condition 3. Let $\mathfrak{h}$ be a simple constituent of $\operatorname{Lie} H_\ell(A) \otimes \mC$. By definition, the simple $\mathfrak{h}$-submodules of $V_\ell(A) \otimes \mC$ are all isomorphic to a single representation $W$. Let us distinguish cases according to the type of $\mathfrak{h}$:
\begin{itemize}
\item if $\mathfrak{h}$ is of Lie type $A_l$, then $W$ is defined by the highest weight $\omega_{\frac{l+1}{2}}$ (recall that $l$ is odd by assumption), and is therefore stable under the unique nontrivial automorphism of the Dynkin diagram of $A_l$: condition 3 is satisfied;
\item if $\mathfrak{h}$ is of Lie type $B_l$ or $C_l$, the Dynkin diagram does not have any nontrivial automorphisms, hence all automorphisms of $\mathfrak{h}$ are inner and  fix the highest weight of $W$: condition 3 is again satisfied;
\item finally, if $\mathfrak{h}$ is of Lie type $D_l$ the module $W$ is defined by the highest weight $\omega_1$. As long as $l \neq 4$, the Dynkin diagram of $D_l$ has a unique nontrivial automorphism, and it is immediate to check that this automorphism fixes $\omega_1$: condition 3 is satisfied once more. Note however that for $l=4$ the Dynkin diagram has additional (triality) automorphisms, and that these do \textit{not} fix $\omega_1$, so condition 3 fails in this case.
\end{itemize}
%Finally, it is clear from the definition that conditions 1 through 3 are satisfied, and -- as long as 3 holds -- so is 5: w
Thus we conclude that every abelian variety $A$ of general Lefschetz type (at the prime $\ell$) satisfies the hypotheses of the previous theorem unless $\operatorname{Lie} H_\ell(A) \otimes \mC$ has a simple factor of Lie type $D_4$.
\end{remark}

\begin{corollary}\label{cor_ProductSurfaces}
Let $K$ be a \ff\ and $A_1, \ldots, A_n$ be absolutely simple abelian varieties defined over $K$, pairwise non-isogenous over $\barK$. Suppose that no $A_i$ is of type IV, and that the dimension of each $A_i$ is either 2 or an odd number. Let $k_1,\ldots,k_n$ be positive integers and $A$ be a $K$-abelian variety that is $\barK$-isogenous to $\prod_{i=1}^n A_i^{k_i}$. Then we have an isomorphism $H_\ell\left( A \right) \cong \prod_{i=1}^n H_\ell(A_i)$, and the Mumford-Tate conjecture holds for $A$.
\end{corollary}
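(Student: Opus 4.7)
The plan is to reduce the statement to an inductive application of Theorem~\ref{thm_Hazama}, after verifying that each $A_i$ is of general Lefschetz type (Definition~\ref{def_GeneralLefschetz}), and then to deduce the Mumford--Tate conjecture via Lemma~\ref{lemma_ProductImpliesMT}. Since $H_\ell(\cdot)$ is invariant under isogeny, and the second half of Proposition~\ref{prop_ProductsEll} gives $H_\ell(\prod_i A_i^{k_i}) \cong H_\ell(\prod_i A_i)$, we may replace $A$ by $\prod_{i=1}^n A_i$ from the outset, so it suffices to prove $H_\ell(\prod_{i=1}^n A_i) \cong \prod_{i=1}^n H_\ell(A_i)$.

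Next I would verify, using the Albert classification, that each $A_i$ is of general Lefschetz type and satisfies the Mumford--Tate conjecture. Because $A_i$ is absolutely simple and not of type IV, the hypothesis on $\dim A_i$ restricts the possibilities as follows: if $\dim A_i$ is odd, only type I can occur (types II and III force the dimension to be even), and the relative dimension is then also odd; if $\dim A_i = 2$, then $A_i$ is of type I (with $\operatorname{rel dim} \in \{1,2\}$) or of type II (with $\operatorname{rel dim} = 1$), the non-existence of type III abelian surfaces following from the fact that a definite quaternion algebra over $\mathbb{Q}$ cannot act on a $4$-dimensional rational Hodge structure of weight one. In all of these cases Theorem~\ref{thm_BGK12}, combined with the extension to $h=2$ noted in Remark~\ref{rmk_TypesIandII}, simultaneously supplies general Lefschetz type and the Mumford--Tate conjecture for $A_i$; moreover every simple factor of $\operatorname{Lie}(H_\ell(A_i)) \otimes \mC$ is of Lie type $C_l$.

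Finally I would argue by induction on $n$. For $n = 1$ there is nothing to show. For $n \geq 2$ set $B = \prod_{i=2}^n A_i$, so that by induction $H_\ell(B) \cong \prod_{i=2}^n H_\ell(A_i)$, and apply Theorem~\ref{thm_Hazama} to the pair $(A_1, B)$. Conditions (1) and (2) follow at once from the preceding step and the inductive hypothesis; condition (3) is verified as in Remark~\ref{rmk_Triality}, because every simple factor $\mathfrak{h}_{i,j}$ that appears is of type $C_l$ (whose Dynkin diagram has no outer automorphisms) and acts through copies of the standard representation, whose highest weight $\varpi_1$ is therefore stable under all automorphisms of $\mathfrak{h}_{i,j}$. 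Since the $A_i$ are pairwise non-isogenous over $\barK$ we have $\operatorname{Hom}_{\barK}(A_1, B) = 0$, and Theorem~\ref{thm_Hazama} yields $H_\ell(A_1 \times B) \cong H_\ell(A_1) \times H_\ell(B)$, closing the induction. The Mumford--Tate conjecture for $A$ then follows from Lemma~\ref{lemma_ProductImpliesMT}. The main obstacle is really the bookkeeping of the middle step: one needs to check that every Albert type admissible under the dimension hypothesis falls under an already-known result that simultaneously provides general Lefschetz type, the Mumford--Tate conjecture, and the absence of $D_4$ simple factors, which is what ensures that condition (3) of Theorem~\ref{thm_Hazama} is satisfied.
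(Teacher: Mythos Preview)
Your proof is correct and follows essentially the same route as the paper's: reduce to $A=\prod_i A_i$, verify via the Albert classification and Theorem~\ref{thm_BGK12} (together with Remark~\ref{rmk_TypesIandII}) that each $A_i$ is of general Lefschetz type with only symplectic simple factors, apply Theorem~\ref{thm_Hazama} inductively, and conclude with Lemma~\ref{lemma_ProductImpliesMT}. Your write-up is in fact more explicit than the paper's in spelling out the Albert-type case analysis and in noting that all simple factors are of Lie type $C_l$, which makes the verification of condition~(3) via Remark~\ref{rmk_Triality} immediate.
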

\begin{proof} The Albert classification implies that every $A_i$ is of type I or II (recall that in characteristic zero there is no absolutely simple abelian surface of type III). As the three abelian varieties $\prod_{i=1}^n A_i^{k_i}$, $\prod_{i=1}^n A_i$ and $A$ all have the same Hodge group and the same groups $H_\ell$, there is no loss of generality in assuming that $k_1=\cdots=k_n=1$ and that $A=\prod_{i=1}^n A_i$. The equality $H_\ell(A_1 \times \cdots \times A_n) \cong H_\ell(A_1) \times \cdots \times H_\ell(A_n)$ then follows by induction from theorem \ref{thm_Hazama}, the hypotheses being verified thanks to theorem \ref{thm_BGK12} (and the remark following it). Lemma \ref{lemma_ProductImpliesMT} then implies that Mumford-Tate holds for $A_1 \times \cdots \times A_n$.
\end{proof}

\begin{corollary}\label{cor_OkWithECAndSurfaces}
Let $K$ be a \ff\ and $A_1, \ldots, A_n$ be absolutely simple $K$-abelian varieties of dimension at most 2, pairwise non-isogenous over $\barK$. Let $k_1,\ldots,k_n$ be positive integers and $A$ be a $K$-abelian variety that is $\barK$-isogenous to $\prod_{i=1}^n A_i^{k_i}$. Then we have $H_\ell\left( A \right) \cong \prod_{i=1}^n H_\ell(A_i)$, and the Mumford-Tate conjecture holds for $A$.
\end{corollary}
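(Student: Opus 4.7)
The plan is to isolate the type IV (CM) factors, handle the CM and non-CM blocks separately, and glue them via Lemma \ref{lem_SemisimpleTimesCM}.

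By Proposition \ref{prop_ProductsEll}, the groups $H_\ell$ for $\prod A_i^{k_i}$ and $\prod A_i$ agree, so after replacing $A$ with $\prod A_i$ (which is $\barK$-isogenous to $A$ and therefore has the same $H_\ell$) we may assume $k_i = 1$ and $A = A_1 \times \cdots \times A_n$. Reorder so that $A_1, \ldots, A_s$ are not of type IV (hence of type I or II, since there is no absolutely simple abelian surface of type III in characteristic zero) and $A_{s+1}, \ldots, A_n$ are of type IV, i.e.\ CM elliptic curves or primitive CM abelian surfaces. Set $A' = A_1 \times \cdots \times A_s$ and $A'' = A_{s+1} \times \cdots \times A_n$. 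Since $A''$ is CM, Lemma \ref{lem_SemisimpleTimesCM} gives $H_\ell(A) \cong H_\ell(A') \times H_\ell(A'')$.

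Because every $A_i$ with $i \le s$ has dimension $1$ or $2$ and is of type I or II, Corollary \ref{cor_ProductSurfaces} applies to $A'$, yielding $H_\ell(A') \cong \prod_{i \le s} H_\ell(A_i)$ together with the Mumford--Tate conjecture for $A'$. Combined with Lemma \ref{lemma_CMImpliesMT} (applied with $B = A''$), this already gives the Mumford--Tate conjecture for $A$.

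It remains to establish $H_\ell(A'') \cong \prod_{j>s} H_\ell(A_j)$ for the CM block. Theorem \ref{thm_CenterMT} gives the Mumford--Tate conjecture for $A''$ and for each $A_j$ with $j > s$; by Lemma \ref{lemma_TrueForHodgeImpliesTrueForHl} (applied iteratively) it therefore suffices to prove the Hodge-group identity $H(A'') \cong \prod_{j>s} H(A_j)$. I would establish this by a direct rank computation: identify the cocharacter lattice of $H(A'')$ with the Galois submodule of $\bigoplus_{j>s} X_*(H(A_j))$ generated by the non-central parts of the CM cocharacters $\mu_j$, and check that these cocharacters are Galois-independent because the $A_j$ are pairwise non-isogenous absolutely simple CM varieties of dimension at most $2$. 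Lemma \ref{lemma_InclusionReductive} then converts the rank equality into the required group isomorphism.

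The main obstacle is this last rank computation. The easy cases are those in which the CM fields of the $A_j$'s are pairwise distinct: two non-isogenous CM elliptic curves must have different imaginary quadratic CM fields, so their norm-one Hodge tori are non-isomorphic as $\mathbb{Q}$-tori and the inclusion of $H(E_1 \times E_2)$ into $H(E_1) \times H(E_2)$ cannot be a graph. The delicate subcase is that of two absolutely simple non-isogenous primitive CM abelian surfaces sharing the same quartic CM field $E$ but with distinct primitive CM types, where one must verify that the sum of the two CM cocharacters generates a Galois submodule of $X_*(T_E)^{\oplus 2}$ of rank equal to the sum of the ranks of the individual Mumford--Tate tori minus the rank of the diagonal central $\mathbb{G}_m$.
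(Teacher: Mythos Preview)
Your overall strategy matches the paper's: reduce to $k_i=1$, separate the type-IV block $A''$ from the non-type-IV block $A'$, apply Lemma~\ref{lem_SemisimpleTimesCM} to split $H_\ell(A)$, and handle $A'$ via Corollary~\ref{cor_ProductSurfaces}. The paper derives Mumford--Tate from Lemma~\ref{lemma_ProductImpliesMT} after establishing the full product decomposition, whereas you invoke Lemma~\ref{lemma_CMImpliesMT}; both routes are fine. One small point you pass over: you assert that the type-IV factors are CM, but an absolutely simple surface of type IV could a priori have endomorphism algebra an imaginary quadratic field (hence not be CM); the paper disposes of this by citing Shimura's result that this does not occur.

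The genuine gap is precisely where you flag it. You reduce $H_\ell(A'')\cong\prod_{j>s}H_\ell(A_j)$ to the Hodge-group statement $H(A'')\cong\prod_{j>s}H(A_j)$ (which is correct, and in fact follows directly from Mumford--Tate for CM varieties without needing Lemma~\ref{lemma_TrueForHodgeImpliesTrueForHl}), but you then only \emph{describe} the rank computation rather than carry it out. The case you single out---two non-isogenous simple CM surfaces with the same quartic CM field $E$ but different primitive CM types---is not a formality: both Mumford--Tate tori are quotients of the \emph{same} torus $\operatorname{Res}_{E/\mathbb{Q}}\mathbb{G}_m$, and checking that the combined cocharacter spans a submodule of the correct rank requires an actual argument with the reflex norms or the Galois orbits of the CM types. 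The paper does not attempt this computation either; it simply cites an external result (\cite[Theorem 3.15]{MR2384535}) which proves the Hodge-group product decomposition for pairwise non-isogenous absolutely simple CM abelian varieties of dimension at most~$2$. To complete your argument you must either invoke such a reference or genuinely finish the computation in the equal-CM-field case.
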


\begin{remark}\label{rmk_Shioda} 
Such a result is in a sense the best possible. There is an example -- due to Shioda \cite{Shioda81algebraiccycles} -- of an absolutely simple threefold $Y$ of CM type and a CM elliptic curve $E$ such that $H(Y \times E) \neq H(Y) \times H(E)$. By the Mumford-Tate conjecture in the CM case, this also means $H_\ell(Y \times E) \neq H_\ell(Y) \times H_\ell(E)$ (note that $Y$ and $E$, being CM, can be defined over a number field).
\end{remark}

\begin{proof}
As in the previous proof, we can assume $k_1=\cdots=k_n=1$ and replace $A$ by $\prod_{i=1}^n A_i$. By lemma \ref{lemma_ProductImpliesMT}, Mumford-Tate for $A$ would follow from the isomorphism $H_\ell\left( A \right) \cong \prod_{i=1}^n H_\ell(A_i)$, so let us prove the latter. Up to renumbering, we can also assume that $A_1, \ldots, A_m$ are of type I or II and $A_{m+1}, \ldots, A_n$ are of type IV (since there are no absolutely simple abelian varieties of type III of dimension at most 2). The classification of elliptic curves and simple surfaces implies that $A_{m+1}, \ldots, A_{n}$ are CM, because the endomorphism algebra of an absolutely simple abelian surface cannot be an imaginary quadratic field (\cite[§4]{shimura1963afp}). Let $A'=A_1 \times \cdots \times A_m$ and $A''=A_{m+1} \times \cdots \times A_n$. As $A''$ is CM and $A'$ has no simple factor of type IV, lemma \ref{lem_SemisimpleTimesCM} gives $H_\ell(A' \times A'') \cong H_\ell(A') \times H_\ell(A'')$. It thus suffices to prove the result when either $A'$ or $A''$ is trivial.

If $A''$ is trivial the claim follows from corollary \ref{cor_ProductSurfaces}, so we can assume $A'$ is trivial, in which case we have to show $H_\ell\left( \prod_{i=1}^n A_i \right) \cong \prod_{i=1}^n H_\ell(A_i)$ under the additional assumption that every $A_i$ is CM. Appealing to the Mumford-Tate conjecture in the CM case, it is enough to show the corresponding statement for Hodge groups, which is exactly the content of \cite[Theorem 3.15]{MR2384535}. 
\end{proof}

\subsection{A criterion in terms of relative dimensions}
As promised in the introduction, we have the following $\ell$-adic analogue of a theorem proved by Ichikawa in \cite{Ichikawa1991}:

\begin{theorem}\label{thm_Ichikawa_ell}
Let $K$ be a \ffn\ and $A'_i, A''_j$ (for $i=1,\ldots,n$ and $j=1,\ldots,m$) be absolutely simple $K$-abelian varieties of odd relative dimension that are pairwise non-isogenous over $\barK$. Suppose every $A'_i$ is of type I, II or III in the sense of Albert, and every $A''_j$ is of type IV. Let $A$ be a $K$-abelian variety that is $\overline{K}$-isogenous to $\prod_{i=1}^n A'_i \times \prod_{j=1}^m A''_j$: then
\[
H_\ell \left( A \right) \cong \prod_{i=1}^n H_\ell\left( A'_i\right) \times H_\ell\left( \prod_{j=1}^m A''_j \right).
\]
\end{theorem}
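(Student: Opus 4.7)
The plan is to reduce the theorem to two statements: (i) $H_\ell(A') \cong \prod_{i=1}^n H_\ell(A'_i)$, where $A' := \prod_{i=1}^n A'_i$, and (ii) $H_\ell(A' \times A'') \cong H_\ell(A') \times H_\ell(A'')$, where $A'' := \prod_{j=1}^m A''_j$. Combining these with the $\barK$-isogeny invariance of $H_\ell$ will then yield the theorem, since $A$ is $\barK$-isogenous to $A' \times A''$.

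For (i), I would apply Theorem~\ref{thm_Hazama} iteratively. Each $A'_i$, being absolutely simple and of odd relative dimension with type I, II, or III, is of general Lefschetz type by Theorem~\ref{thm_BGK12} (types I and II) and Proposition~\ref{prop_TypeIII} (type III), so conditions 1 and 2 of Theorem~\ref{thm_Hazama} hold. Condition 3 is verified as in Remark~\ref{rmk_Triality}: type $C$ has no outer automorphisms; for $D_h$ with $h$ odd (in particular $h \neq 4$, avoiding triality) the outer involution fixes $\omega_1$; and for $A_l$ with $l+1$ a power of $2$ (the only type-$A$ case arising here, which forces $l$ odd), the involution $\omega_k \leftrightarrow \omega_{l+1-k}$ fixes $\omega_{(l+1)/2}$. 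Pairwise non-isogeny of the $A'_i$ together with absolute simplicity rules out the alternative $\operatorname{Hom}_{\barK}(A'_i, A'_{i'}) \neq 0$, so iterated application of Theorem~\ref{thm_Hazama} produces the desired decomposition.

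For (ii), Theorem~\ref{thm_Hazama} does not apply directly since $H_\ell(A'')$ generally has a nontrivial central torus. Instead I would argue at the Lie-algebra level. Because $H_\ell(A')$ is semisimple, the central tori of $H_\ell(A' \times A'')$ and of $H_\ell(A'')$ are identified via the projection, so by Lemma~\ref{lemma_InclusionReductive} it suffices to show that
\[
\operatorname{Lie} H_\ell(A' \times A'')^{\operatorname{ss}} \otimes \mC \hookrightarrow \bigl(\operatorname{Lie} H_\ell(A') \otimes \mC\bigr) \oplus \bigl(\operatorname{Lie} H_\ell(A'')^{\operatorname{ss}} \otimes \mC\bigr)
\]
is an equality. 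By Lemma~\ref{lemma_RibetLemma}(a), this reduces to verifying surjectivity of every pairwise projection onto simple factors $\mathfrak{g}_i \oplus \mathfrak{g}_j$.

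The hard part will be the mixed case, where $\mathfrak{g}_i$ comes from $\operatorname{Lie} H_\ell(A')$ and $\mathfrak{g}_j$ from $\operatorname{Lie} H_\ell(A'')^{\operatorname{ss}}$. If surjectivity fails, the projection is the graph of some isomorphism $\varphi: \mathfrak{g}_i \to \mathfrak{g}_j$, and arguing as in the proof of Theorem~\ref{thm_Hazama} one should extract a nonzero equivariant morphism between simple constituents of $V_\ell(A')$ and $V_\ell(A'')$; Corollary~\ref{cor_TateConjecture} would then produce a nonzero $\barK$-homomorphism between some $A'_i$ and $A''_j$, contradicting that these have different Albert types. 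The delicate technical input is producing this intertwiner, which requires knowing the minuscule highest weights on both sides precisely enough to detect a common irreducible constituent under $\varphi$; the oddness of the relative dimensions plays the essential role, as via Theorems~\ref{thm_Pink} and~\ref{thm_BGK12} and Proposition~\ref{prop_TypeIII} it tightly constrains the representations on the non-type-IV side, while a parallel Pink-type analysis (in the spirit of \cite{Pink}) supplies the corresponding structural information on the type-IV side.
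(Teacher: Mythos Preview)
Your reduction to (i) and (ii) and your treatment of (i) are exactly the paper's. The divergence is in (ii), and there your outline has a real gap.

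You propose to allow the mixed pairwise projection to be the graph of an isomorphism $\varphi:\mathfrak{g}_i\to\mathfrak{g}_j$ and then manufacture an $\mathfrak{h}$-equivariant map $V_\ell(A')\to V_\ell(A'')$ ``arguing as in the proof of Theorem~\ref{thm_Hazama}''. But that argument relies on two ingredients you do not have on the type~IV side: the componentwise decomposition $V_\ell(A'')\otimes\mC\cong\bigoplus_j V_j$ with each simple factor acting on its own block (hypothesis~2 of Theorem~\ref{thm_Hazama}), and the matching of irreducible constituents required by condition~(b1) of Lemma~\ref{lemma_RibetLemma}. Neither is established for type~IV varieties, and the vague ``parallel Pink-type analysis'' you invoke does not supply them; Pink's theorem constrains the weights appearing in each simple $\mathfrak{h}$-submodule of $V_\ell(A'')\otimes\mC$, but it does not give you a block decomposition of the Tate module indexed by the simple factors of the Lie algebra. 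Without that block structure the $\Psi$-construction from Lemma~\ref{lemma_RibetLemma} does not produce an element of $\operatorname{End}_{\mathfrak{h}}$, so you cannot reach Corollary~\ref{cor_TateConjecture}.

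The paper sidesteps all of this by proving a structural fact about the type~IV side (Proposition~\ref{prop_AdmissibleAlgebras}(2)): for an absolutely simple $A$ of type~IV and odd relative dimension, every simple factor of $\operatorname{Lie}(H_\ell(A))^{\operatorname{ss}}\otimes\mC$ is of type $A_l$ with $l+1$ \emph{not} a power of $2$. The argument is: for $\ell$ split in the CM centre $E$, the module $V_\ell(A)\otimes\mC$ decomposes as $\bigoplus_\sigma W_\sigma^{\oplus d}$ with each $W_\sigma$ simple of dimension $\operatorname{rel dim}(A)$, which is odd; any simple factor $\mathfrak{h}_i$ acts nontrivially on some $W_\sigma$, and the only classical simple algebras admitting an odd-dimensional minuscule module are of type $A_l$ with $l+1$ not a power of $2$. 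An interpolation via the $\ell$-independence of the formal character and the short-root trick of Larsen--Pink extends this to all $\ell$. Once you know this, the simple factors on the $A''$ side are disjoint from those on the $A'$ side (which are $\mathfrak{sp}$, $\mathfrak{so}$, or $\mathfrak{sl}_{l+1}$ with $l+1$ a power of $2$), and Proposition~\ref{prop_DifferentSimpleFactors} gives (ii) immediately---no intertwiner needed.
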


\medskip

For the proof of this theorem we shall need the following result:

\begin{proposition}\label{prop_AdmissibleAlgebras} Let $K$ be a \ffn, $A/K$ be an absolutely simple abelian variety of odd relative dimension and $\ell$ be a prime number. Write $\operatorname{Lie}(H_\ell(A)) \otimes \mC$ as $\mathfrak{c} \oplus \mathfrak{h}_1 \oplus \cdots \oplus \mathfrak{h}_n$, where $\mathfrak{c}$ is abelian and every $\mathfrak{h}_i$ is simple. Then
\begin{enumerate}
\item
if $A$ is of type I, II or III, then $A$ is of general Lefschetz type, and no simple factor $\mathfrak{h}_i$ is of Lie type $D_4$;%satisfies all the hypotheses of theorem \ref{thm_Hazama};
\item 
if $A$ is of type IV, then the algebras $\mathfrak{h}_i$ are of type $A_l$, where $l+1$ is not a power of $2$.
\end{enumerate}
\end{proposition}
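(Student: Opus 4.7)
The plan is to handle the two parts separately. For part (1), the results already compiled in Section \ref{sect_Results} do essentially all the work: Theorem \ref{thm_BGK12} with Remark \ref{rmk_TypesIandII} covers types I and II (yielding $\mathfrak{h}_i \cong \mathfrak{sp}_{2h}$, i.e.\ Lie type $C_h$), while Proposition \ref{prop_TypeIII} covers type III (yielding factors of Lie type $D_h$ with $h$ odd, or $A_l$ with $l+1$ a power of $2$); both also assert that $A$ is of general Lefschetz type. None of these Lie types can coincide with $D_4$: $C_h$ never equals $D_4$; $D_h$ with $h$ odd forces $h \neq 4$; and the $A$ and $D$ families are disjoint.

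For part (2) the strategy is to combine Pink's minuscule theorem (Theorem \ref{thm_Pink}) with Faltings' theorem and a dimension count. After possibly replacing $K$ by a finite extension (which does not affect $H_\ell(A)$), Theorem \ref{thm_TateConjecture} gives $\operatorname{End}(V_\ell(A))^{H_\ell(A)} \cong \operatorname{End}^0_{\barK}(A) \otimes \mathbb{Q}_\ell$. Writing $E$ for the CM center of $\operatorname{End}^0_{\barK}(A)$, $e=[E:\mathbb{Q}]$ and $d^2 = [\operatorname{End}^0_{\barK}(A):E]$, and tensoring with $\mC$ yields
\[
\operatorname{End}^0_{\barK}(A) \otimes \mC \cong \prod_{\sigma : E \hookrightarrow \mC} M_d(\mC),
\]
and then a double-commutant argument produces a bimodule decomposition
\[
V_\ell(A) \otimes \mC \cong \bigoplus_\sigma U_\sigma \otimes M_\sigma,
\]
where each $U_\sigma$ is the $d$-dimensional simple $M_d(\mC)$-module and each $M_\sigma$ is an irreducible $H_\ell(A) \otimes \mC$-module. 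Using that $V_\ell(A)$ is free as a module over $\operatorname{End}^0_{\barK}(A) \otimes \mathbb{Q}_\ell$, every $M_\sigma$ has dimension $\frac{2\dim A}{de} = \operatorname{rel dim}(A) = h$, which is odd by hypothesis.

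By Theorem \ref{thm_Pink} each $M_\sigma$ further decomposes as $C_\sigma \otimes \bigotimes_{i=1}^n W_{i,\sigma}$, with each nontrivial $W_{i,\sigma}$ an irreducible minuscule representation of $\mathfrak{h}_i$. The identity $\prod_i \dim W_{i,\sigma} = h$ forces every $\dim W_{i,\sigma}$ to be odd; consulting the minuscule table (dimensions $2^l$ for $B_l$, $2l$ for $C_l$, and $2l$ or $2^{l-1}$ for $D_l$) excludes all classical types except $A_l$. Since $\operatorname{Lie} H_\ell(A)$ acts faithfully on $V_\ell(A)$, each $\mathfrak{h}_i$ acts nontrivially on some $M_\sigma$, so every $\mathfrak{h}_i$ is of type $A_l$. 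Finally, by Kummer's theorem $\binom{l+1}{r}$ is even for all $1 \leq r \leq l$ whenever $l+1$ is a power of $2$, so the existence of a nontrivial minuscule weight with odd-dimensional representation rules out this case.

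The step I expect to be most delicate is justifying the double-commutant decomposition cleanly and confirming the uniform equality $\dim M_\sigma = h$ across embeddings $\sigma$. This relies on combining Faltings' theorem (in its $\ell$-adic form) with the freeness of $V_\ell(A)$ as an $\operatorname{End}^0_{\barK}(A) \otimes \mathbb{Q}_\ell$-module, itself a consequence of Poincaré reducibility for type IV varieties.
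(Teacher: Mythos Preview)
Your treatment of part (1) coincides with the paper's. For part (2) your approach is correct but takes a genuinely more direct route than the paper's. The paper argues in two stages: first it establishes the claim only for primes $\ell$ totally split in $E$, via essentially the same Schur/double-commutant decomposition you outline (obtaining simple $\mathfrak{h}_\ell(A)\otimes\mC$-submodules of odd dimension $h=\operatorname{rel dim}(A)$ and then invoking the minuscule table); it then transfers the conclusion to arbitrary $\ell$ by an interpolation argument, using Serre's theorem that the formal character of $G_\ell(A)^0$ is independent of $\ell$ together with the Larsen--Pink observation that the system $\Phi_\ell^0$ of short roots is determined by that character, which forces $\Phi_\ell=\Phi_p$ for any split prime $p$. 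Your point that the decomposition $V_\ell(A)\otimes\mC \cong \bigoplus_\sigma M_\sigma^{\oplus d}$ with $\dim M_\sigma=h$ already holds for \emph{every} $\ell$ --- because $V_\ell(A)$ is free over $E\otimes\mathbb{Q}_\ell$ --- renders this second stage unnecessary. One small correction: the freeness is not really a consequence of Poincar\'e complete reducibility, but simply of $D=\operatorname{End}^0_{\barK}(A)$ being a division algebra (so that $H_1(A_{\mathbb{C}},\mathbb{Q})$ is automatically a free $D$-module), combined with the Betti--\'etale comparison $V_\ell(A)\cong H_1\otimes\mathbb{Q}_\ell$. What the paper's longer route buys is portability: the formal-character/short-root argument does not rely on comparison with singular cohomology, and is precisely what the paper invokes to carry the proposition over to positive characteristic in Section~\ref{sect_PosChar}.
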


%\begin{remark}\label{rmk_TypeIIIIII}
%In case 1, the abelian variety $A$ satisfies the hypotheses of theorem \ref{thm_Hazama} by remark \ref{rmk_Triality}.
%\end{remark}

\begin{proof}
Let $A$ be of type I, II or III. Then $A$ is of general Lefschetz type by theorem \ref{thm_BGK12} and proposition \ref{prop_TypeIII}, and again by proposition \ref{prop_TypeIII} the simple factors of $\operatorname{Lie}\left(H_{\ell}(A)\right) \otimes \mC$ of orthogonal type are of the form $\mathfrak{so}_{2h}$ with $h$ odd, so none of them is of Lie type $D_4$. %Hence $A$ satisfies the hypotheses of theorem \ref{thm_Hazama} by remark \ref{rmk_Triality}.

Let now $A$ be of type IV. Let $E$ be the center of the simple algebra $\operatorname{End}_{\barK}^0(A)$; set $e=[E:\mathbb{Q}]$ and $d^2=\left[\operatorname{End}_{\barK}^0(A):E \right]$. We are first going to show the desired property for those primes that split in $E$, and then extend the result to all primes through an interpolation argument based on the techniques of \cite{MR1074479}. Suppose therefore that $\ell$ is totally split in $E$. From the equality $E \otimes \mathbb{Q}_\ell \cong \mathbb{Q}_\ell^{[E:\mathbb{Q}]}$ we get
\[
\operatorname{End}_{\barK}^0(A) \otimes \mC \cong \bigoplus_{\sigma: E \hookrightarrow \mathbb{C}} M_d(\mC),
\]
so Schur's lemma implies
\[
V_\ell(A) \otimes \mC \cong \bigoplus_{\sigma: E \hookrightarrow \mathbb{C}}  W_\sigma^{\oplus d},
\]
where each $W_\sigma$ is simple of dimension $\frac{1}{de} \operatorname{dim}_{\mC}(V_\ell(A) \otimes \mC) = \operatorname{rel dim}(A)$. The action of $H_\ell(A)$ on $V_\ell(A)$ is faithful, so for every $i=1,\ldots,n$ there exists a $\sigma:E\hookrightarrow \mathbb{C}$ (depending on $i$) such that the action of $\mathfrak{h}_i$ is nontrivial on $W_\sigma$. Note that $\dim(W_\sigma)$ is odd. Let $W_\sigma \cong Z_1 \otimes \cdots \otimes Z_n$ be the decomposition of $W_\sigma$ with respect to the action of $\mathfrak{h}_1 \oplus \cdots \oplus \mathfrak{h}_n$; the module $Z_i$ is thus a nontrivial minuscule representation of $\mathfrak{h}_i$ of \textit{odd} dimension: since every minuscule module over an algebra of type $B_l, C_l, D_l$ is of \textit{even} dimension (cf. table 1), we deduce that $\mathfrak{h}_i$ is of type $A_l$ for a certain $l$. Furthermore, $l+1$ cannot be a power of 2, since in that case every irreducible minuscule module over $A_l$ is of even dimension. This shows our claim when $\ell$ is totally split.

\medskip
Let us now consider the general case. Let $\ell$ be any prime, and $p$ be a fixed prime that splits completely in $E$. Let $\Phi_\ell$ be the root system of $\left( G_\ell(A) \otimes \overline{\mathbb{Q}_\ell} \right)^{\operatorname{der}}$, and let $\Phi_\ell^0$ be the subset of $\Phi_\ell$ given by those roots that are short in their respective simple factors of $\left( G_\ell(A) \otimes \overline{\mathbb{Q}_\ell} \right)^{\operatorname{der}}$. Note that $\Phi^0_p=\Phi_p$, since $\Phi_p$ only involves root systems of type $A_l$ (and such root systems do not possess long roots). It is a theorem of Serre that the formal characters of the various $G_\ell(A)$, for varying $\ell$, are all equal (see \cite[Corollary 3.8]{Pink}), and from \cite[§4]{MR1074479} (see also pp. 212-213 of \cite{Pink}) we know that the formal character completely determines $\Phi^0_\ell$. Hence we have $\Phi^0_\ell = \Phi^0_p=\bigoplus_{i=1}^k A_{n_i}$ for a certain $k$ and for integers $n_i$ such that no $n_i+1$ is a power of 2; in particular, no $n_i$ equals 1. Write now $\Phi_\ell=\bigoplus_{i=1}^r R_i$, where each $R_i$ is a simple root system. It is easy to see that $A_l^0=A_l, B_l^0=lA_1, C_l^0=D_l$ and $D_l^0=D_l$, so the equality
\[
\bigoplus_{i=1}^k A_{n_i}=\Phi^0_p=\Phi^0_\ell=\bigoplus_{j=1}^r R_j^0
\]
implies -- by uniqueness of the decomposition in simple root systems -- that every root system $R_j$ is either of type $A_l$ or $B_m$ (for some $l,m$). On the other hand, if one $R_j$ were of type $B_m$, then the right hand side of the above equality would contain $B_m^0=m A_1$, but no root system of type $A_1$ can appear on the left hand side by what we have already shown. This implies that every $R_j$ is of type $A_l$ (for some $l$), and the uniqueness of the decomposition shows that $r=k$ and (up to renumbering the indices) $R_j=A_{n_j}$. Hence the root system of $G_{\ell}(A)^{\operatorname{der}}$ is the same as that of $G_{p}(A)^{\operatorname{der}}$, and in particular all the simple algebras $\mathfrak{h}_i$ are of Lie type $A_l$, where $l+1$ is not a power of 2.
\end{proof}

\begin{proof}{(of theorem \ref{thm_Ichikawa_ell})}
There is no loss of generality in assuming that $A=A' \times A''$, where
\[
A'=\prod_{i=1}^n A'_i, \quad A'' = \prod_{j=1}^m A''_j.%, \quad A=A' \times A''
\]
%Notice that $H_\ell(A')$ is isomorphic to $\prod_{i=1}^n H_\ell(A'_i)$: this follows by repeated application of theorem \ref{thm_Hazama}, whose hypotheses are satisfied thanks to remark \ref{rmk_Triality} and proposition \ref{prop_AdmissibleAlgebras} (indeed this proposition implies that every $A_i'$ is of general Lefschetz type and no algebra $\operatorname{Lie} \left(H_\ell(A'_i)\right) \otimes \overline{\mathbb{Q}_\ell}$ has a simple factor of Lie type $D_4$).
Repeatedly applying theorem \ref{thm_Hazama} shows that $H_\ell(A')$ is isomorphic to $\prod_{i=1}^n H_\ell(A'_i)$: indeed by proposition \ref{prop_AdmissibleAlgebras} we know that every $A_i'$ is of general Lefschetz type and no algebra $\operatorname{Lie} \left(H_\ell(A'_i)\right) \otimes \overline{\mathbb{Q}_\ell}$ has a simple factor of Lie type $D_4$, so the hypotheses of theorem \ref{thm_Hazama} are satisfied thanks to remark \ref{rmk_Triality}.
%By proposition \ref{prop_AdmissibleAlgebras} and remark \ref{rmk_Triality} we see that every $A'_i$ satisfies the hypotheses of theorem \ref{thm_Hazama}, so a simple induction shows that $H_\ell(A')$ is isomorphic to $\prod_{i=1}^n H_\ell(A'_i)$. 
%
%Notice first that theorem \ref{thm_Hazama} and an immediate induction imply that $H_\ell(A')$ is isomorphic to $\prod_{i=1}^n H_\ell(A'_i)$. 
Thus it is enough to show that $H_\ell(A) \cong H_\ell(A') \times H_\ell(A'')$, and this follows from proposition \ref{prop_DifferentSimpleFactors}: by the results of section \ref{sect_Results}, the simple factors of $\operatorname{Lie} \left( H_\ell(A') \right) \otimes \mC$ are either of type $\mathfrak{so}$, $\mathfrak{sp}$ or $\mathfrak{sl}_{l+1}$ (with $l+1$ a power of 2), whereas by the previous proposition the simple factors of $\operatorname{Lie} \left( H_\ell(A'')^{\operatorname{der}} \right) \otimes \mC$ are of type $\mathfrak{sl}_{l+1}$ (with $l+1$ not a power of 2).
\end{proof}

\begin{remark}
Notice that, as the rank of $H_\ell(A)$ is independent of $\ell$, knowing that part (2) of proposition \ref{prop_AdmissibleAlgebras} holds for \textit{some} prime $\ell$ would in fact be enough to prove theorem \ref{thm_Ichikawa_ell}. Though a weaker version of the proposition would be easier to show (since it would not require the second part of the proof provided), we have preferred to give and employ the result in its stronger form (applying to \textit{all} primes), which we believe has some merit in itself.
\end{remark}

\section{Results in positive characteristic}\label{sect_PosChar}
We now discuss the situation of $K$ being a field of positive characteristic, finitely generated over its prime field, and we restrict ourselves to the primes $\ell \neq \operatorname{char} K$. If $A$ is a $K$-abelian variety, we denote $G_\ell(A)$ the Zariski closure of the natural Galois representation
\[
\rho_\ell : \operatorname{Gal}\left(\kSep / K \right) \to \operatorname{Aut}\left( T_\ell(A) \right),
\]
where $\kSep$ is now a fixed \textit{separable} closure of $K$.

The main difficulty in translating the results of the previous sections to this context is that if we define $H_\ell(A)$ as $(G_\ell(A) \cap \operatorname{SL}(V_\ell(A)))^0$, then this group might not capture any information about $A$ at all. The crucial problem is the failure of Bogomolov's theorem in positive characteristic: for general abelian varieties $A/K$, it is not true that $G_\ell(A)$ contains the torus of homotheties, and therefore the intersection $G_\ell(A) \cap \operatorname{SL}(V_\ell(A))$ may very well be finite. 

\begin{remark}
A simple example of this phenomenon is given by an ordinary elliptic curve $E$ over a finite field $\mathbb{F}_q$. Let $\operatorname{Fr}_q$ be the Frobenius automorphism of $\mathbb{F}_q$; the image of $\rho_\ell$ is generated by the image $g$ of $\operatorname{Fr}_q$, and as it is well known we have $\det \rho_\ell(g) = q$. Looking at the Lie algebra of $G_\ell(E)$, it follows easily that this group is 1-dimensional and that $H_\ell(E)$ is the trivial group, so that no information about $E$ can be recovered from $H_\ell(E)$. This problem is studied in \cite{MR2289628}, where more examples of this situation are given.
\end{remark}

However, Zarhin has proved that a statement akin to Bogomolov's theorem holds in positive characteristic if we restrict ourselves to a certain (large) class of abelian varieties; more precisely, we have the following result:
\begin{theorem}{(\cite{MR0453757}, Theorem 2 and Corollary 1)}
Let $K$ be a finitely generated field of positive characteristic and $A$ be a $K$-abelian variety. Let $\ell$ be a prime different from $\operatorname{char}(K)$. There exist a semisimple Lie algebra $\mathfrak{h}$ and a 1-dimensional Lie algebra $\mathfrak{c}$ such that $\operatorname{Lie} G_\ell(A) \cong \mathfrak{c} \oplus \mathfrak{h}$.

If furthermore no simple factor of $A_{\overline{K}}$ is of type IV in the sense of Albert, then $\mathfrak{c} \cong \mathbb{Q}_\ell \cdot \operatorname{Id}$ is the Lie algebra of the torus of homotheties.
\end{theorem}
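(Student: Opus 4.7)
The plan is to deduce both assertions from (i) a positive-characteristic analogue of Faltings' semisimplicity theorem due to Zarhin, and (ii) the Weil conjectures applied to Frobenii at closed points of a finite-type $\mathbb{F}_p$-model of $K$. Set $T = Z(G_\ell(A)^0)$ and $\mathfrak{c} = \operatorname{Lie} T$, so the second assertion amounts to identifying $T$ with the torus of homotheties. The first ingredient yields reductivity: by Zarhin's theorem $V_\ell(A)$ is a semisimple $\operatorname{Gal}(\kSep/K)$-module and the commutant of the Galois action equals $\operatorname{End}_K(A) \otimes \mathbb{Q}_\ell$; semisimplicity forces $G_\ell(A)^0$ to be reductive, and standard structure theory gives the decomposition $\operatorname{Lie} G_\ell(A) = \mathfrak{c} \oplus \mathfrak{h}$ with $\mathfrak{h}$ semisimple and $\mathfrak{c}$ abelian. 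The lower bound $\dim \mathfrak{c} \geq 1$ is immediate from the determinant character of the Galois action on $V_\ell(A)$, which is a nontrivial power of the cyclotomic character.

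The heart of the proof, and the main obstacle, is the upper bound $\dim \mathfrak{c} \leq 1$: note that this inequality fails in characteristic zero (for CM varieties), so positive characteristic must be used essentially. I would spread $A$ out to a smooth family over a scheme $S$ of finite type over $\mathbb{F}_p$ having $K$ as its function field, and examine the Frobenius $\operatorname{Frob}_s \in G_\ell(A)$ at each closed point $s$ in a dense open subset. By the Weil conjectures, the eigenvalues of $\operatorname{Frob}_s$ on $V_\ell(A)$ are Weil $q_s$-numbers of weight one. Decompose $V_\ell(A) \otimes \mC = \bigoplus V_{\chi_i}$ into $T$-isotypic components; centrality of $T$ guarantees that each $V_{\chi_i}$ is $\operatorname{Frob}_s$-stable, so the characters $\det(\cdot|V_{\chi_i})$ of $G_\ell(A)$ restrict on $T$ to $\chi_i^{\dim V_{\chi_i}}$. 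Weighing these determinants against the Weil estimates and applying Kronecker's theorem to the weight-zero ratios $(\chi_i\chi_j^{-1})(\operatorname{Frob}_s)$, combined with the Zariski-density of Frobenii in $G_\ell(A)^0$ (Chebotarev), shows that all the $\chi_i$ coincide in $X^*(T) \otimes \mathbb{Q}$; faithfulness of the $T$-action on $V_\ell(A)$ then forces $\dim T \leq 1$.

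For the second assertion, assume no simple factor of $A_{\barK}$ is of type IV, and suppose for contradiction that $\mathfrak{c}$ is not the Lie algebra of homotheties. Then the $1$-dimensional torus $T$ acts with at least two distinct weights on $V_\ell(A)$, and the resulting nontrivial $G_\ell(A)$-stable decomposition $V_\ell(A) = \bigoplus V_{\chi_i}$ would, by the commutant part of Zarhin's theorem, come from a $\barK$-isogeny decomposition $A \sim \prod A_i$. On each $V_\ell(A_i)$ the torus $T$ acts through a single character which, by the analysis of the previous paragraph, equals the cyclotomic character up to a finite-order twist. Such a non-cyclotomic central character can only arise when $A_i$ carries extra commuting endomorphisms whose centre is a CM field, thereby forcing a simple factor of $A_i$ to be of type IV in Albert's classification and contradicting the hypothesis. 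We conclude $\mathfrak{c} = \mathbb{Q}_\ell \cdot \operatorname{Id}$, as required.
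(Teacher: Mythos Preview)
The paper does not actually prove this theorem: it is quoted from Zarhin's work, and the only indication of proof is the subsequent remark that the result is ``a rather direct consequence of the reductivity of $G_\ell(A)$ and of Tate's conjecture on homomorphisms''. So there is little to compare against directly; nonetheless, your sketch deserves comment because its central step is flawed, and because the approach it takes differs from the one the paper's remark points to.

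Your argument for the upper bound $\dim\mathfrak c\le 1$ rests on applying Kronecker's theorem to the ``weight-zero ratios $(\chi_i\chi_j^{-1})(\operatorname{Frob}_s)$''. This does not work as written. The characters $\chi_i$ are characters of $T$, not of $G_\ell(A)$, so they cannot be evaluated at $\operatorname{Frob}_s$ directly; one must pass to the characters $\det(\,\cdot\,|V_{\chi_i})$ as you indicate, but then the relevant quantity is a ratio of products of Frobenius eigenvalues. Such a ratio has all archimedean absolute values equal to $1$ by the Weil bounds, yet it is \emph{not} an algebraic integer: for an ordinary elliptic curve over $\mathbb{F}_q$ the eigenvalues are $\pi,\bar\pi$ with $\pi\bar\pi=q$, and $\pi/\bar\pi$ has nontrivial valuation at the primes above $p$. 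Kronecker's theorem therefore does not apply, and indeed in that example the Zariski closure of $\langle\operatorname{Frob}\rangle$ is a $2$-dimensional torus, so no argument using only the archimedean Weil estimates can force $\dim T\le 1$.

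The route the paper's remark gestures at, and the one Zarhin actually uses for the second assertion, avoids Frobenius weights entirely. After a finite extension of $K$ the centre $\mathfrak c$ of $\mathfrak g_\ell$ commutes with the Galois image, so by Tate's theorem $\mathfrak c\subset\operatorname{End}^0_K(A)\otimes\mathbb Q_\ell$, and in fact $\mathfrak c$ lies in the \emph{centre} of that algebra by double commutant. When no simple factor of $A_{\overline K}$ is of type IV, this centre is a product of totally real fields and the Rosati involution attached to a polarization is trivial on it; playing this off against the containment $G_\ell(A)\subset\operatorname{GSp}(V_\ell,\psi)$ (so that elements $t\in T$ satisfy $t^{*}=\lambda(t)\,t^{-1}$) forces every $x\in\mathfrak c$ to be a scalar. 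Together with your correct lower bound $\dim\mathfrak c\ge 1$ this gives $\mathfrak c=\mathbb Q_\ell\cdot\operatorname{Id}$. Your final paragraph moves vaguely in this direction, but the key Rosati/polarization step is never isolated, and the assertion that a non-cyclotomic central character ``can only arise'' from a CM centre is stated rather than proved.
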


\begin{remark}
Zarhin's theorem is a rather direct consequence of the reductivity of $G_\ell(A)$ and of Tate's conjecture on homomorphisms. At the time of \cite{MR0453757}, these two facts had only been established (by Zarhin himself, cf. \cite{MR0371897} and \cite{MR0422287}) under the assumption that $\operatorname{char} K$ is greater than $2$, but Mori \cite{MR797982} has subsequently lifted this restriction.
\end{remark}

\begin{remark}\label{rmk_NotOkWith}
Let $K$ be a finitely generated field of positive characteristic and $E_1, E_2$ be two elliptic curves over $K$. Assume $\operatorname{End}_{\overline{K}}(E_1)$ and $\operatorname{End}_{\overline{K}}(E_1)$ are imaginary quadratic fields, and $E_1, E_2$ are not isogenous over $\overline{K}$. As $E_1 \times E_2$ is CM, the group $G_\ell(E_1 \times E_2)$ is abelian and therefore -- by Zarhin's theorem -- of dimension 1: this is in stark contrast with what happens in characteristic zero, where $H_\ell(E_1 \times E_2) \cong H_\ell(E_1) \times H_\ell(E_2)$ is of dimension 2. In particular, we cannot hope for an analogue of corollary \ref{cor_OkWithECAndSurfaces} to hold in positive characteristic.
\end{remark}

In view of Zarhin's theorem and of the previous remarks, the most natural definition for $H_\ell(A)$ in positive characteristic seems to be the following:
\begin{definition}\label{def_Hl_p}
Let $K$ be a finitely generated field of characteristic $p>0$. For every prime $\ell$ different from $p$ we set $H_\ell(A)=\left( G_\ell(A)^0 \right)^{\operatorname{der}}$.
\end{definition}

\begin{remark}
When the characteristic of $K$ is positive, Zarhin's theorem implies that $G_\ell(A)^{\operatorname{der}}$ is of codimension 1 in $G_\ell(A)$; this is not necessarily the case in characteristic zero. On the other hand, as in characteristic zero, it is clear from definition \ref{def_Hl_p} that $H_\ell(A \times B)$ projects surjectively onto $H_\ell(A)$ and $H_\ell(B)$.
\end{remark}

Let us now restrict ourselves to abelian varieties $A$ such that no simple factor of $A_{\overline{K}}$ is of type IV. In the proof of corollary \ref{cor_TateConjecture} we can then replace Bogomolov's theorem by Zarhin's theorem, at which point the argument used to show theorem \ref{thm_Hazama} goes through essentially unchanged. Thus for this class of abelian varieties we have:
\begin{theorem}{(cf. theorem \ref{thm_Hazama})}\label{thm_Hazama_p} Let $K$ be a finitely generated field of characteristic $p>0$ and $A_1, A_2$ be $K$-abelian varieties such that $A_{1,\overline{K}}$ and $A_{2,\overline{K}}$ have no simple factors of type IV. Let $\ell$ be a prime number different from $p$, and suppose hypotheses 1 through 3 of theorem \ref{thm_Hazama} are satisfied. Then either $\operatorname{Hom}_{\barK}(A_1,A_2) \neq 0$ or $H_\ell(A_1 \times A_2) \cong H_\ell(A_1) \times H_\ell(A_2)$.
\end{theorem}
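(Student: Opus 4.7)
The strategy is to follow the proof of Theorem \ref{thm_Hazama} almost verbatim, replacing the one ingredient that relies on Bogomolov's theorem---namely Corollary \ref{cor_TateConjecture}---by a positive-characteristic analogue based on Zarhin's theorem. The assumption that neither $A_{1,\overline{K}}$ nor $A_{2,\overline{K}}$ has a simple factor of type IV is precisely what guarantees that the torus of homotheties still lies in $G_\ell(A_1\times A_2)$, so that the same structural argument will go through.

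The first step is to prove the following variant of Corollary \ref{cor_TateConjecture}: if $\mathfrak{h}_\ell=\operatorname{Lie} H_\ell(A_1\times A_2)$ and $\operatorname{Hom}_{\mathfrak{h}_\ell}(V_\ell(A_1),V_\ell(A_2))\neq 0$, then $\operatorname{Hom}_{\barK}(A_1,A_2)\neq 0$. After replacing $K$ by a finite separable extension $K'$ over which $G_\ell(A_1\times A_2)$ becomes connected (an operation that does not change $H_\ell$ in the sense of Definition \ref{def_Hl_p}), Zarhin's theorem gives a decomposition $\operatorname{Lie} G_\ell(A_1\times A_2)\cong \mathbb{Q}_\ell\cdot\operatorname{Id}\oplus\mathfrak{h}_\ell$ in which the summand $\mathbb{Q}_\ell\cdot\operatorname{Id}$ is the Lie algebra of the homotheties. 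Any linear map is equivariant for this summand, so $\operatorname{Hom}_{\mathfrak{h}_\ell}(V_\ell(A_1),V_\ell(A_2))$ coincides with $\operatorname{Hom}_{\operatorname{Lie} G_\ell}(V_\ell(A_1),V_\ell(A_2))$, and by connectedness of $G_\ell$ also with $\operatorname{Hom}_{\mathbb{Q}_\ell[G_\ell]}(V_\ell(A_1),V_\ell(A_2))$. Tate's conjecture on homomorphisms of abelian varieties in positive characteristic (Zarhin, Mori) then converts this into a nonzero element of $\operatorname{Hom}_{K'}(A_1,A_2)\otimes\mathbb{Q}_\ell$, hence of $\operatorname{Hom}_{\barK}(A_1,A_2)$.

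With this analogue in hand, the second step is to run the argument of Theorem \ref{thm_Hazama} unchanged: view $\mathfrak{h}_\ell\otimes\mC$ as a subalgebra of $\bigoplus_{i,j}\mathfrak{h}_{i,j}\subset \bigoplus_{i,j}\mathfrak{gl}(V_{i,j})$; hypothesis 3 provides condition (b1) of Lemma \ref{lemma_RibetLemma}. If (b2) also holds, the lemma forces $\mathfrak{h}_\ell\otimes\mC\cong(\mathfrak{h}_1\oplus\mathfrak{h}_2)\otimes\mC$; applying Lemma \ref{lemma_InclusionReductive} to the inclusion $H_\ell(A_1\times A_2)\hookrightarrow H_\ell(A_1)\times H_\ell(A_2)$---which makes sense and projects surjectively onto each factor because the analogous properties hold for $G_\ell$ and are preserved upon passing to $(G_\ell^0)^{\operatorname{der}}$---gives the desired isomorphism. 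If (b2) fails, the construction in the original proof produces a nonzero $(\mathfrak{h}_\ell\otimes\mC)$-equivariant morphism $V_\ell(A_1)\otimes\mC\to V_\ell(A_2)\otimes\mC$, so $\operatorname{Hom}_{\mathfrak{h}_\ell}(V_\ell(A_1),V_\ell(A_2))\neq 0$ and the Tate-style statement established above concludes.

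The only genuine difficulty is the first step, where the failure of Bogomolov's theorem in positive characteristic has to be circumvented; the no-type-IV hypothesis is exactly what lets Zarhin's theorem substitute for it. Everything else---the Lie-algebraic Lemmas \ref{lemma_InclusionReductive} and \ref{lemma_RibetLemma}, and the product-structure properties of $H_\ell$ encoded in Definition \ref{def_Hl_p} (connectedness, semisimplicity, surjective projections onto the factors)---is characteristic-free and transfers with no modification.
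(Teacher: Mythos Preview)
Your proposal is correct and follows exactly the approach the paper indicates: replace Bogomolov's theorem by Zarhin's theorem (which, under the no-type-IV hypothesis, still identifies the abelian summand of $\operatorname{Lie} G_\ell$ with the homotheties) to obtain the positive-characteristic analogue of Corollary \ref{cor_TateConjecture}, and then rerun the proof of Theorem \ref{thm_Hazama} verbatim. The paper's own argument is just the one-sentence observation preceding the statement, so you have simply spelled out the details it leaves implicit.
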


\begin{remark}
This theorem is strictly weaker than the corresponding result in characteristic zero, in that there exist abelian varieties of type IV (over number fields) that satisfy all hypotheses of theorem \ref{thm_Hazama}. Examples of such varieties include fourfolds of type IV(1,1) that support exceptional Weil classes, cf. \cite{Moonen95hodgeand}. On the other hand, the abelian varieties of corollary \ref{cor_ProductSurfaces} satisfy the hypotheses of the present weakened version, hence the corollary remains true when $K$ is of positive characteristic.
\end{remark}

\smallskip

Let us now consider theorem \ref{thm_Ichikawa_ell}. Its proof essentially relies on theorem \ref{thm_BGK12} and proposition \ref{prop_TypeIII}, which in turn only depend on Tate's conjecture and on the minuscule weights conjecture (theorem \ref{thm_Pink}). As already remarked, the former is now known for arbitrary finitely generated fields of positive characteristic, while the second has been shown by Zarhin (\cite[Theorem 4.2]{MR740792}) under an additional technical assumption, namely that the abelian variety in question has ordinary reduction in dimension 1 at all places of $K$ with at most finitely many exceptions (cf.~\cite[Definition 4.1.0]{MR740792}; this is a condition weaker than being ordinary). Finally, for varieties of type IV we have also exploited the fact that the formal character of $G_\ell(A)^0$ is independent of $\ell$: this statement too is known for finitely generated fields of positive characteristic (see \cite{MR553707} and \cite{MR1150604}, Proposition 6.12 and Examples 6.2, 6.3), so proposition \ref{prop_AdmissibleAlgebras} is still valid in this context. Taking all these facts into account we obtain:

\begin{theorem}{(cf.~theorem \ref{thm_Ichikawa_ell})} \label{thm_Ichikawa_p}
Let $K$ be a finitely generated field of positive characteristic and $A'_i, A''_j$ (for $i=1,\ldots,n$ and $j=1,\ldots,m$) be absolutely simple $K$-abelian varieties of odd relative dimension that are pairwise non-isogenous over $\barK$. Suppose every $A'_i$ is of type I, II or III in the sense of Albert, and every $A''_j$ is of type IV. Finally, suppose that each $A'_i$ and each $A''_j$ has ordinary reduction in dimension 1 at all places of $K$ with at most finitely many exceptions, and let $\ell$ be a prime different from $\operatorname{char} K$. Let $A$ be a $K$-abelian variety that is $\overline{K}$-isogenous to $\prod_{i=1}^n A'_i \times \prod_{j=1}^m A''_j$: then
\[
H_\ell \left( A \right) \cong \prod_{i=1}^n H_\ell\left( A'_i\right) \times H_\ell\left( \prod_{j=1}^m A''_j \right).
\]
\end{theorem}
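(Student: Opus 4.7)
The plan is to follow the proof of Theorem~\ref{thm_Ichikawa_ell} step by step, substituting each characteristic-zero input by the positive characteristic analogue identified in the paragraph preceding the statement. Since $H_\ell(-)$ in the sense of Definition~\ref{def_Hl_p} is invariant under $\overline{K}$-isogeny, one may assume $A = A' \times A''$ with $A' = \prod_{i=1}^n A'_i$ and $A'' = \prod_{j=1}^m A''_j$. The proof then splits into two steps: first show $H_\ell(A') \cong \prod_{i=1}^n H_\ell(A'_i)$, and second show $H_\ell(A' \times A'') \cong H_\ell(A') \times H_\ell(A'')$.

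The key preliminary is to re-establish Proposition~\ref{prop_AdmissibleAlgebras} in positive characteristic. For $A'_i$ of type I, II, or III, Theorem~\ref{thm_BGK12} and Proposition~\ref{prop_TypeIII} only depend on Tate's conjecture on homomorphisms (available over finitely generated fields of positive characteristic thanks to Zarhin and Mori) and on the minuscule weights theorem (Theorem~\ref{thm_Pink}); Zarhin's version of the latter in \cite[Theorem~4.2]{MR740792} requires exactly that $A'_i$~\hyp, which is guaranteed by hypothesis. Hence each $A'_i$ is of general Lefschetz type and no simple factor of $\operatorname{Lie}(H_\ell(A'_i)) \otimes \mC$ is of Dynkin type $D_4$. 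For $A''_j$ of type IV, the first half of the proof of Proposition~\ref{prop_AdmissibleAlgebras} uses only Schur's lemma and the table of minuscule weights (again valid under the ordinary reduction hypothesis), while the interpolation argument of the second half uses only the $\ell$-independence of the formal character of $G_\ell(A)^0$, which is also available in positive characteristic. Consequently the simple factors of $\operatorname{Lie}(H_\ell(A''_j)) \otimes \mC$ are of Lie type $A_l$ with $l+1$ not a power of $2$.

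With this structural information in hand, step (i) follows by inductively applying Theorem~\ref{thm_Hazama_p} to the factors of $A'$: since no $A'_i$ has a factor of type IV and each is of general Lefschetz type without $D_4$ simple components, Remark~\ref{rmk_Triality} shows that hypotheses (1)--(3) of Theorem~\ref{thm_Hazama} are inherited at each inductive stage, and the pairwise non-isogeny of the $A'_i$ rules out the escape clause involving $\operatorname{Hom}_{\overline{K}}$ (the proof of Corollary~\ref{cor_TateConjecture} transfers because $A'$ has no type IV factor, so Zarhin's theorem replaces Bogomolov's). For step (ii) I would apply Proposition~\ref{prop_DifferentSimpleFactors}, whose proof uses only the Ribet-style Lemma~\ref{lemma_RibetLemma}, Lemma~\ref{lemma_InclusionReductive}, and the surjectivity of $H_\ell(A' \times A'') \twoheadrightarrow H_\ell(A'), H_\ell(A'')$, which is immediate from Definition~\ref{def_Hl_p}; in particular it carries over to positive characteristic verbatim (noting also that semisimplicity of $H_\ell(A')$ is now automatic). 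By the strengthened Proposition~\ref{prop_AdmissibleAlgebras}, the simple factors of $\operatorname{Lie}(H_\ell(A')) \otimes \mC$ are of type $\mathfrak{so}$, $\mathfrak{sp}$, or $\mathfrak{sl}_{l+1}$ with $l+1$ a power of $2$, whereas those of $\operatorname{Lie}(H_\ell(A'')) \otimes \mC$ are of type $\mathfrak{sl}_{l+1}$ with $l+1$ \emph{not} a power of $2$; these lists are disjoint, so the hypothesis of Proposition~\ref{prop_DifferentSimpleFactors} is satisfied and step (ii) follows.

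The main obstacle is conceptual rather than technical: one must verify carefully that each ingredient borrowed from the characteristic-zero argument (Tate's conjecture, Pink's minuscule-weights theorem, and the $\ell$-independence of the formal character of $G_\ell(A)^0$) really transfers to finitely generated fields of positive characteristic. This is exactly the content of the paragraph preceding the statement, and it is also the precise point at which the ordinary-reduction hypothesis enters, via Zarhin's version of the minuscule-weights theorem. Once this bookkeeping is in place, the rest is a mechanical translation of the argument of Theorem~\ref{thm_Ichikawa_ell}.
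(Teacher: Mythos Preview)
Your proposal is correct and follows essentially the same approach as the paper: the paper's proof of Theorem~\ref{thm_Ichikawa_p} is given in the paragraph immediately preceding its statement, and consists precisely of observing that the ingredients of the proof of Theorem~\ref{thm_Ichikawa_ell} (Tate's conjecture, the minuscule weights theorem, and $\ell$-independence of the formal character) all have positive-characteristic analogues, so that Proposition~\ref{prop_AdmissibleAlgebras} remains valid and the argument transfers verbatim. Your write-up is in fact more detailed than the paper's, but the strategy and inputs are identical.
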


\section{Nonsimple varieties of dimension at most $5$}\label{sect_MoonenZarhin}
Let once more $K$ be a \ff\ and $A/K$ be an abelian variety. With the results of the previous sections at hand it is a simple matter to compute, when $A/K$ is of dimension at most 5 and nonsimple over $\overline{K}$, the structure of $H_\ell(A)$ in terms of the $H_\ell$'s of the simple factors of $A_{\barK}$. Given however that the analogous problem for $H(A)$ has been given a complete solution in \cite{MZ}, we limit ourselves to showing that (in most cases) an abelian variety $A$ of dimension at most 5 satisfies Mumford-Tate, and refer the reader to \cite{MZ} for more details on the precise structure of $H(A)$ (hence of $H_\ell(A)$). Note in any case that -- for many varieties, including those for which we cannot prove Mumford-Tate -- our arguments yield the structure of $H_\ell(A)$ directly, without appealing to the results of \cite{MZ}.

\begin{theorem}\label{thm_MT5}
Let $K$ be a \ff\ and $A$ be a $K$-abelian variety of dimension at most 5. Then the Mumford-Tate conjecture holds for $A$, except possibly in the following two cases:
\begin{enumerate}
\item $\dim A=4$ and $\operatorname{End}_{\overline{K}}(A)=\mathbb{Z}$;
\item $A$ is isogenous over $\overline{K}$ to a product $A_1 \times A_2$, where $A_1$ is an absolutely simple abelian fourfold with $\operatorname{End}_{\overline{K}}(A_1)=\mathbb{Z}$ and $A_2$ is an elliptic curve. In this case $H_\ell(A)$ is isomorphic to $H_\ell(A_1) \times H_\ell(A_2)$.
\end{enumerate}
%, $n$ be an integer no less than 2, and $A_1, \ldots, A_n$ be absolutely simple $K$-abelian varieties such that $\sum_{i=1}^n \dim A_i \leq 4$. Let $A$ be a $K$-abelian variety that is $\overline{K}$-isogenous to $A_1 \times \cdots \times A_n$: then the Mumford-Tate conjecture holds for $A$.
\end{theorem}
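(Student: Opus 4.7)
The plan is a case analysis on the $\overline{K}$-isogeny decomposition of $A$. After passing to a finite extension of $K$ and invoking Proposition~\ref{prop_ProductsEll}, one reduces to the situation $A=\prod_{i=1}^n A_i$ with the $A_i$ pairwise non-$\overline{K}$-isogenous absolutely simple $K$-abelian varieties, each appearing once, with total dimension at most $5$. For each $A_i$ the Mumford-Tate conjecture is already known by Theorems~\ref{thm_EC} (dimension $1$), \ref{thm_PrimeDimension} (prime dimensions $2,3,5$), or \ref{thm_MT4} (dimension $4$ with non-trivial endomorphism ring); the only possibility leaving MT open for a simple factor is an absolutely simple fourfold $A_1$ with $\operatorname{End}_{\overline{K}}(A_1)=\mathbb{Z}$. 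By Lemma~\ref{lemma_ProductImpliesMT} it is therefore enough, outside of the two listed configurations, to prove the decomposition $H_\ell(A)\cong\prod_i H_\ell(A_i)$.

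To obtain this decomposition, I would first apply Lemma~\ref{lem_SemisimpleTimesCM} to split off every CM factor from the rest, reducing to separately handling an all-CM sub-product (settled by Theorem~\ref{thm_CenterMT} together with \cite[Theorem 3.15]{MR2384535}, as in the proof of Corollary~\ref{cor_OkWithECAndSurfaces}) and a non-type-IV sub-product. For the latter, I would combine Corollary~\ref{cor_OkWithECAndSurfaces} (factors of dimension $\le 2$), Corollary~\ref{cor_ProductSurfaces} (non-type-IV factors of odd dimension or dimension $2$), Theorem~\ref{thm_Ichikawa_ell} (grafting in non-CM type~IV factors of odd relative dimension), and direct applications of Theorem~\ref{thm_Hazama} whenever a fourfold factor with $\operatorname{End}_{\overline{K}}\neq\mathbb{Z}$ appears: by Theorem~\ref{thm_BGK12} and Proposition~\ref{prop_TypeIII} such fourfolds are of general Lefschetz type and never contain a $D_4$ simple factor, so the hypotheses of Theorem~\ref{thm_Hazama} are verified via Remark~\ref{rmk_Triality}. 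A finite enumeration of the partitions of integers $\le 5$ then shows that every configuration in which no Exception~1 fourfold appears is handled, yielding MT in all such cases.

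Exception~1 is left open by Theorem~\ref{thm_MT4}. For Exception~2, $A\sim A_1\times E$, one must still prove $H_\ell(A)\cong H_\ell(A_1)\times H_\ell(E)$. By Theorem~\ref{thm_MT4}, $H_\ell(A_1)$ is either $\operatorname{Sp}_{8,\mathbb{Q}_\ell}$ or a $\mathbb{Q}_\ell$-form of $\operatorname{SL}_2^3$, while $H_\ell(E)$ is either $\operatorname{SL}_{2,\mathbb{Q}_\ell}$ or a torus. Three of the four sub-cases are routine: if $E$ is CM, Lemma~\ref{lem_SemisimpleTimesCM} applies; if $H_\ell(A_1)=\operatorname{Sp}_8$, the simple Dynkin types of $\operatorname{Lie} H_\ell(A_1)\otimes\overline{\mathbb{Q}_\ell}$ and $\operatorname{Lie} H_\ell(E)\otimes\overline{\mathbb{Q}_\ell}$ are distinct and Proposition~\ref{prop_DifferentSimpleFactors} applies.

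The hard part, and the main obstacle of the proof, is the remaining sub-case, in which $H_\ell(A_1)$ is a form of $\operatorname{SL}_2^3$ and $E$ is non-CM. Both Lie algebras then have only simple factors of Dynkin type $A_1$ over $\overline{\mathbb{Q}_\ell}$, and the irreducible tensor representation $V_\ell(A_1)\otimes\overline{\mathbb{Q}_\ell}\cong\operatorname{Std}\otimes\operatorname{Std}\otimes\operatorname{Std}$ admits no direct-sum decomposition matching the three simple factors of $\operatorname{Lie} H_\ell(A_1)$, so neither Proposition~\ref{prop_DifferentSimpleFactors} nor Theorem~\ref{thm_Hazama} is directly applicable. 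My plan for this case is to work at the level of the $\mathbb{Q}_\ell$-form: exploiting that $A_1$ is absolutely simple with trivial endomorphism ring, one argues (in the spirit of Lemma~\ref{lemma_CanChooselSoGroupIsSimple}) that the three $\overline{\mathbb{Q}_\ell}$-simple factors of $\operatorname{Lie} H_\ell(A_1)\otimes\overline{\mathbb{Q}_\ell}$ are transitively permuted by $\operatorname{Gal}(\overline{\mathbb{Q}_\ell}/\mathbb{Q}_\ell)$ for $\ell$ in a set of positive density, so that $\operatorname{Lie} H_\ell(A_1)$ is $\mathbb{Q}_\ell$-simple of dimension $9$ and hence not isomorphic to the $3$-dimensional $\operatorname{Lie} H_\ell(E)=\mathfrak{sl}_{2,\mathbb{Q}_\ell}$; a Ribet-style argument (Lemma~\ref{lemma_RibetLemma}(a)) applied directly over $\mathbb{Q}_\ell$ then forces the decomposition for these $\ell$. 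The $\ell$-independence of the formal character of $H_\ell(A_1\times E)$ propagates the resulting rank equality $\operatorname{rk} H_\ell(A_1\times E)=4$ to every prime, and Lemma~\ref{lemma_InclusionReductive} upgrades this rank equality to the desired group-level isomorphism for all $\ell$.
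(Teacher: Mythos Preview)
Your overall strategy matches the paper's: reduce to pairwise non-isogenous simple factors, split off CM pieces, and then establish $H_\ell(\prod A_i)\cong\prod H_\ell(A_i)$ case by case. However, there are two genuine gaps.

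\textbf{Non-CM type IV fourfolds.} You claim that a fourfold $A_1$ with $\operatorname{End}_{\barK}(A_1)\neq\mathbb{Z}$ is of general Lefschetz type by Theorem~\ref{thm_BGK12} and Proposition~\ref{prop_TypeIII}. Those results only cover types I, II, III; Definition~\ref{def_GeneralLefschetz} explicitly excludes type IV, and your Ichikawa fallback requires odd relative dimension. The non-CM type IV fourfolds are precisely cases IV(1,1) (relative dimension $4$) and IV(2,1) (relative dimension $2$), both even, so none of your listed tools applies when such an $A_1$ is paired with a non-CM elliptic curve. The paper treats IV(1,1) and IV(4,1) via Proposition~\ref{prop_DifferentSimpleFactors} (no $\mathfrak{sl}_2$ factor), and IV(2,1) by identifying $H(A_1)^{\operatorname{der}}$ with the $\mathbb{Q}$-simple group $\operatorname{Res}_{E_0/\mathbb{Q}}\operatorname{SU}(E^2,\psi)$, then using Lemma~\ref{lemma_CanChooselSoGroupIsSimple} to pick a prime $p$ at which $H_p(A_1)^{\operatorname{der}}$ is $\mathbb{Q}_p$-simple and running a rank argument. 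Your write-up omits this entire branch.

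\textbf{The $\operatorname{SL}_2^3$ sub-case.} Your plan here is the right one (find a prime where $H_\ell(A_1)$ is $\mathbb{Q}_\ell$-simple, then compare ranks), but the step ``exploiting that $A_1$ has trivial endomorphism ring, one argues in the spirit of Lemma~\ref{lemma_CanChooselSoGroupIsSimple} that the factors are transitively permuted for $\ell$ of positive density'' is not justified. Lemma~\ref{lemma_CanChooselSoGroupIsSimple} needs a single $\mathbb{Q}$-simple algebraic group to which Chebotarev can be applied; the a priori data are only the $\mathbb{Q}_\ell$-groups $H_\ell(A_1)$, and $\operatorname{End}_{\barK}(A_1)=\mathbb{Z}$ by itself does not force $\mathbb{Q}_\ell$-simplicity at any particular $\ell$ (a product decomposition $G_1\times G_2$ can still act absolutely irreducibly on $V_\ell(A_1)$ via a tensor product). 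The paper supplies the missing bridge by invoking \cite[Theorem 5.13]{Pink}, which produces a $\mathbb{Q}$-simple group $P(A_1)$ with $H_\ell(A_1)\cong P(A_1)\otimes\mathbb{Q}_\ell$ for a density-one set of primes; only then does Lemma~\ref{lemma_CanChooselSoGroupIsSimple} apply. Without this input your argument does not go through.
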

\begin{proof}
If $A$ is absolutely simple the result follows immediately from theorems \ref{thm_EC}, \ref{thm_PrimeDimension} and \ref{thm_MT4}. Suppose therefore that $A$ is not absolutely simple. Since $H(A)$ and $H_\ell(A)$ are invariant both under isogeny and finite extension of the base field, we can assume without loss of generality that $A$ is isomorphic to a product $A_1 \times \cdots \times A_n$, where each factor is absolutely simple. Furthermore, if all the $A_i$ are of dimension at most 2 we can simply apply corollary \ref{cor_OkWithECAndSurfaces}, so (up to renumbering) we can assume $\dim A_1 \geq 3$.

Consider first the case $\dim A=4$. By what we have already proved we can assume $A \cong A_1 \times A_2$, where $A_1$ is an absolutely simple threefold and $A_2$ is an elliptic curve. In particular, $A_1$ and $A_2$ are of odd relative dimension, so if $A_2$ does not have complex multiplication (hence it is not of type IV) we have $H_\ell(A_1 \times A_2) \cong H_\ell(A_1) \times H_\ell(A_2)$ by theorem \ref{thm_Ichikawa_ell}, and the claim follows from lemma \ref{lemma_ProductImpliesMT}. On the other hand, if $A_2$ does have complex multiplication the claim follows immediately from lemma \ref{lemma_CMImpliesMT}.

Next consider the case $\dim A=5$. 
We can assume that in the decomposition $A=A_1 \times \cdots \times A_n$ no two $A_i$'s are isogenous over $\overline{K}$, for otherwise the problem is reduced to a lower-dimensional one. Furthermore, we have already considered the case $n=1$, so we can also assume $n \geq 2$. Recall that we have renumbered the $A_i$ in such a way that $\dim A_1 \geq 3$.

Suppose first that at least one of the $A_i$ has complex multiplication. Write $A= B \times C$, where $C$ is the product of those $A_i$ that are CM and $B$ is the product of the remaining factors. We have $\dim B \leq 4$. If $B$ satisfies Mumford-Tate, then Mumford-Tate for $A$ follows from lemma \ref{lemma_CMImpliesMT} and we are done. If, on the contrary, $B$ does not satisfy Mumford-Tate, then the results of section \ref{sect_Results} together with the case $\dim A=4$ treated above imply that $B=A_1$ is an absolutely simple fourfold with $\operatorname{End}_{\barK}(B)=\mathbb{Z}$, and we are in case (2); hence we just need to prove that $H_\ell(A_1 \times A_2)$ is isomorphic to $H_\ell(A_1) \times H_\ell(A_2)$, which follows at once from lemma \ref{lem_SemisimpleTimesCM}. From now on we can therefore assume that no $A_i$ is CM. Also recall that elliptic curves and abelian surfaces without CM are of type I or II in the sense of Albert.

We now need to distinguish several sub-cases, each of which we shall treat by proving the equality $H_\ell(A) \cong \prod_{i=1}^n H_\ell(A_i)$: indeed, if Mumford-Tate holds for every $A_i$, this equality implies Mumford-Tate for $A$ by lemma \ref{lemma_ProductImpliesMT}, and if Mumford-Tate fails for one of the $A_i$'s this equality is all we have to show.

\smallskip

Suppose first that $\dim A_1=3$ and $A_2, A_3$ are elliptic curves (without CM): then for all primes $\ell$, and independently of the type of $A_1$, theorem \ref{thm_Ichikawa_ell} gives $H_\ell(A) \cong H_\ell(A_1) \times H_\ell(A_2) \times H_\ell(A_3)$.

\smallskip

Next suppose $\dim A_1$ is $3$ and $A_2$ is an absolutely simple abelian surface without CM (hence not of type IV). Let $\ell$ be any prime. If $\operatorname{rel dim}(A_2)=1$, or $A_1$ is not of type IV, then we have $H_\ell(A) \cong H_\ell(A_1) \times H_\ell(A_2)$ resp. by theorem \ref{thm_Ichikawa_ell} or corollary \ref{cor_ProductSurfaces}. We can therefore assume that $\operatorname{End}_{\barK}(A_2)$ is $\mathbb{Z}$ and $A_1$ is of type IV and does not have complex multiplication. It is known that in this case $\operatorname{Lie}(H_\ell(A_2)) \cong \mathfrak{sp}_{4,\mathbb{Q}_\ell}$, and $\operatorname{Lie} \left(H_\ell(A_1)^{\operatorname{der}}\right) \otimes \mC \cong \mathfrak{sl}_{3,\mC}$ (cf. \cite{Ribet83classeson}), so it follows from proposition \ref{prop_DifferentSimpleFactors} that $H_\ell(A) \cong H_\ell(A_1) \times H_\ell(A_2)$.

\smallskip

We now need to consider the case when $A_1$ is an absolutely simple abelian fourfold and $A_2$ is an elliptic curve without CM; this assumption will be in force for the remainder of the proof.

Suppose first that $A_1$ is not of type IV and that $\operatorname{End}_{\barK}(A_1) \neq \mathbb{Z}$. By the results of \cite{Moonen95hodgeand} we know that $A_1$ is of general Lefschetz type, so that the equality $H_\ell(A_1 \times A_2) \cong H_\ell(A_1) \times H_\ell(A_2)$ follows from theorem \ref{thm_Hazama} and remark \ref{rmk_Triality}.

\smallskip

Consider now the case when $A_1$ is of type IV. It is not hard to check (from the results in \cite{Moonen95hodgeand}) that either $\operatorname{Lie}(H_\ell(A_1)) \otimes \mC$ does not have any simple factor isomorphic to $\mathfrak{sl}_2$ (cases IV(1,1) and IV(4,1) in the notation of \cite{Moonen95hodgeand}) or we are in case IV(2,1). In the former case we apply proposition \ref{prop_DifferentSimpleFactors} to deduce that $H_\ell(A) \cong H_\ell(A_1) \times H_\ell(A_2)$ for all primes $\ell$. Suppose instead that we are in case IV(2,1), that is to say $\operatorname{End}^0_{\barK}(A_1)$ is a CM field $E$ of degree 4 over $\mathbb{Q}$. Let $E_0$ be the maximal totally real subfield of $E$. We read from \cite{Moonen95hodgeand} the equality $H(A_1)^{\operatorname{der}}=\operatorname{Res}_{E_0/\mathbb{Q}} \operatorname{SU}(E^2,\psi)$, where $\psi$ is a suitable Hermitian form on $E^2$. Since $[E_0:\mathbb{Q}]=2$ and $\operatorname{SU}(E^2,\psi)$ is an $E_0$-form of $\operatorname{SL}_2$, the group $H(A_1)^{\operatorname{der}}$ is isogenous to a $\mathbb{Q}$-form of $\operatorname{SL}_2^2$; moreover, it is $\mathbb{Q}$-simple by Theorem 1.10 of \cite{Pink2004}. Finally, the Mumford-Tate conjecture holds for $A_1$ by theorem \ref{thm_MT4}, so for all primes $\ell$ we have an isomorphism $H_\ell(A_1) \cong H(A_1) \otimes \mathbb{Q}_\ell$. By lemma \ref{lemma_CanChooselSoGroupIsSimple} there is a prime $p$ such that the group $H_p(A_1)^{\operatorname{der}} \cong H(A_1)^{\operatorname{der}} \otimes \mathbb{Q}_p$ is simple over $\mathbb{Q}_p$.
Suppose by contradiction $\operatorname{rk} H_p(A)^{\operatorname{der}} < \operatorname{rk} H_p(A_1)^{\operatorname{der}} + \operatorname{rk} H_p(A_2)$. As $\operatorname{rk} H_p(A_2)=1$ we have $\operatorname{rk} H_p(A)^{\operatorname{der}} = \operatorname{rk} H_p(A_1)^{\operatorname{der}}$, so the natural projection $H_p(A) \twoheadrightarrow H_p(A_1)$ induces an isogeny $H_p(A)^{\operatorname{der}} \to H_p(A_1)^{\operatorname{der}}$. Composing the dual isogeny $H_p(A_1)^{\operatorname{der}} \to H_p(A)^{\operatorname{der}}$ with the projection of $H_p(A)^{\operatorname{der}}$ onto $H_p(A_2)$ we obtain a surjective morphism $H_p(A_1)^{\operatorname{der}} \twoheadrightarrow H_p(A_2)$: but this is absurd, because the two groups have different ranks and $H_p(A_1)^{\operatorname{der}}$ is simple. The contradiction shows that $\operatorname{rk} H_p(A)^{\operatorname{der}} = \operatorname{rk} H_p(A_1)^{\operatorname{der}} + \operatorname{rk} H_p(A_2)$, from which we deduce first that $H_p(A) \cong H_p(A_1) \times H_p(A_2)$ and then (since the ranks of $H_\ell(A_1), H_\ell(A_2)$ and $H_\ell(A)$ do not depend on $\ell$) that $H_\ell(A) \cong H_\ell(A_1) \times H_\ell(A_2)$ holds for all primes $\ell$.

\smallskip

We finally come to the case $\dim A_1=4$ and $\operatorname{End}_{\barK}(A_1) = \mathbb{Z}$. If for one (hence every) prime $\ell$ we have $H_\ell(A_1)=\operatorname{Sp}_{8,\mathbb{Q}_\ell}$, then the abelian variety $A_1$ is of general Lefschetz type (cf. \cite[§4.1]{Moonen95hodgeand}), so the equality $H_\ell(A) \cong H_\ell(A_1) \times H_\ell(A_2)$ follows from theorem \ref{thm_Hazama}. Thus the last case we have to cover is that of $H_\ell(A_1)$ being isogenous to a $\mathbb{Q}_\ell$-form of $\operatorname{SL}_2^3$ for every prime $\ell$. By \cite[Theorem 5.13]{Pink}, there is a simple $\mathbb{Q}$-algebraic group $P(A_1)$ such that, for a set of primes $\ell$ of Dirichlet density 1, we have an isomorphism $H_\ell(A_1) \cong P(A_1) \otimes \mathbb{Q}_\ell$. Furthermore, $P(A_1)$ is isogenous to a $\mathbb{Q}$-form of $\operatorname{SL}_2^3$, so by lemma \ref{lemma_CanChooselSoGroupIsSimple} we can choose a prime $p$ for which $H_p(A_1) \cong P(A_1) \otimes \mathbb{Q}_p$ is $\mathbb{Q}_p$-simple. We can now repeat the argument of case IV(2,1) above: if by contradiction we had $\operatorname{rk} H_p(A) < \operatorname{rk} H_p(A_1) + \operatorname{rk} H_p(A_2)$ we would obtain a surjective morphism from $H_p(A_1)$ to $H_p(A_2)$, which is absurd since the two groups have different rank and the source is simple. We deduce once more that $\operatorname{rk} H_\ell(A) = \operatorname{rk} H_\ell(A_1) + \operatorname{rk} H_\ell(A_2)$ holds for $\ell=p$ (hence for every prime $\ell$), so for every $\ell$ we have $H_\ell(A) \cong H_\ell(A_1) \times H_\ell(A_2)$.
\end{proof}

\bibliography{Biblio}{}
\bibliographystyle{plain}

\end{document}